\newtheorem{theorem}{Theorem}
\newtheorem{lemma}[theorem]{Lemma}
\newtheorem{corollary}[theorem]{Corollary}
\newtheorem{proposition}[theorem]{Proposition}
\newtheorem{example}[theorem]{Example}
\newtheorem{remark}[theorem]{Remark}
\numberwithin{equation}{section}
\newcommand{\tto}{\twoheadrightarrow}
\font\sc=rsfs10
\newcommand{\cC}{\sc\mbox{C}\hspace{1.0pt}}
\begin{document}
\title{On multisemigroups}
\author{Ganna Kudryavtseva and Volodymyr Mazorchuk}
\date{\today}

\begin{abstract}
Motivated by recent appearance of multivalued structures in categorification, tropical geometry and other areas,
we study basic properties of abstract multisemigroups. We give many new and old examples and general
constructions for multisemigroups. Special attention is paid to simple and nilpotent multisemigroups. We also 
show that ``almost all'' randomly chosen multivalued binary operations define multisemigroups.
\end{abstract}
\maketitle

\section{Introduction and motivation}\label{s1}

A {\em multisemigroup} is a ``semigroup in which multiplication is multivalued''. More precisely, a multisemigroup is
a pair $(S,*)$, where $S$ is a non-empty set and $*$ is a map (a so-called
{\em multivalued operation} or {\em multioperation}) from $S\times S$ to $2^{S}$, the power set of
$S$, such that the following associativity axiom is satisfied: for every $a,b,c\in S$ we have
\begin{equation}\label{eq1}
\bigcup_{t\in b*c}a*t=\bigcup_{s\in a*b}s*c.  
\end{equation}
Every semigroup is a multisemigroup in an obvious way, however, there are many natural examples of multisemigroups 
which are not semigroups. 

Our motivation for the present paper comes from the fact observed in \cite{MM2} that 
multisemigroups appear naturally in higher representation theory and categorification, see Subsection \ref{s3.8} of this paper and also \cite{MM2} for more details. 

Definition of multistructures in general goes back at least to the 1934 paper \cite{Ma} by Marty. Various aspects
and properties of multistructures, in particular, multisemigroups (usually under different names which will be
reviewed in the next section), were studied by several authors, see for example \cite{Ca,DO,Ea,Ha,Ko,Wa}. We also 
refer the reader to the recent survey paper \cite{Vi} by Viro, which mostly deals with multigroups and multirings, 
for more historical information. The paper \cite{Vi} shows that some multistructures (mainly multifields) are 
relevant to the study of tropical geometry. 

In the present paper we make an attempt to
establish basic properties of abstract multisemigroups in analogy with those of semigroups. We give 
many examples and counterexamples illustrating the scopes of the theory we are developing. We will see that in many respects semigroups and 
multisemigroups are similar but in many other respects they are very different. Section~\ref{s2} fixes notation
and vocabulary for multivalued analogues of binary operations and corresponding structures, including multisemigroups, hypergroups 
and multigroups. In Section~\ref{s3} we propose many different examples and constructions of multisemigroups. These
include both concrete examples of multisemigroups of small cardinality (Subsections~\ref{s3.1}, \ref{s3.2} 
and \ref{s3.15}), the multisemigroup of cosets of a group modulo a  subgroup 
(Subsection~\ref{s3.3}) and various constructions associated to different kinds of ideals, variants, subwords 
etc. (Subsections~\ref{s3.4}, \ref{s3.5}, \ref{s3.6}, \ref{s3.11}, \ref{s3.12}  and \ref{s3.14}).
We also mention several multisemigroups related to our motivation: the multisemigroup associated to a fully
additive bicategory (Subsection~\ref{s3.8}) and its disguise via a positive basis in an associative algebra
(Subsection~\ref{s3.7}). We also show that the underlying set of every Weyl group can be equipped with two very 
different multisemigroups structures: the first one coming from the Kazhdan-Lusztig combinatorics 
(Subsection~\ref{s3.9}) and the other one coming from the standard basis of the corresponding Hecke algebra for the 
values of the parameter which guarantee that this standard basis has necessary positivity 
properties (Subsection~\ref{s3.10}). 

In Section~\ref{s4} we collect basic notions and properties: Green's relations, various types of elements, ideals,
homomorphisms, congruences, quotients, representations by binary relations and connections to dioids, quantales
and Boolean algebras.
In Section~\ref{s5} we study simple multisemigroups (that is multisemigroups with a unique two-sided ideal) and
a special class of simple multisemigroups which we call {\em strongly} simple. A simple multisemigroup is
strongly simple provided that it is a union of its minimal left ideals and it is a union of its minimal right ideals.
Strongly simple multisemigroups can be viewed as analogues of completely $0$-simple or $0$-bisimple semigroups. For strongly simple multisemigroups we establish a structure theory similar to the classical structure 
theory of $0$-bisimple semigroups. However, there are significant differences: The role of idempotent
$\mathcal{H}$-classes is now played by hypergroups (and the latter do not need to have any idempotents or 
quasi-idempotents, see an example in Subsection~\ref{s3.15}). There is only a partial analogue of Green's lemma,
namely minimal right (left) ideals are no longer connected by bijective translations but by surjective
multivalued translations. We give explicit examples showing that the general case behaves much more complicated
than the case of classical semigroups (see Subsections~\ref{s5.205}, \ref{s5.207}, \ref{s5.31}, \ref{s5.209}  and \ref{s5.3}). 

In Section~\ref{s6} we establish another essential difference between semigroups and multisemigroups. 
Given a set with $n$ elements, one can randomly choose a binary operation on it and ask what is the
probability that it is associative (hence defining a semigroup). The answer is that this probability is
``small'' in the sense that it tends to $0$ when $n$ tends to infinity. For multisemigroups the picture turns
out to be exactly the opposite: when $n$ tends to infinity, then the probability that a randomly chosen
multivalued operation on an $n$-element set defines a multisemigroups approaches $1$.

Finally, in Section~\ref{s7} we look closer at nilpotent multisemigroups, characterize them in terms of the action 
graphs, extend to finite multisemigroups two classical characterizations of finite nilpotent semigroups, define
and characterize the radical of a multisemigroup and study maximal (with respect to inclusions) nilpotent 
submultisemigroups of strongly simple multisemigroups.
\vspace{0.5cm}

\noindent
{\bf Acknowledgment.} This work was done during the visit of the first author to Uppsala University, which was 
supported by the Department of Mathematics, Uppsala University and the Swedish Research Council. Hospitality and
support of Uppsala University is gratefully acknowledged. The research of the first author was also partially 
supported by the ARRS grant P1-0288. For the second author the research was partially supported by the 
Royal Swedish Academy of Sciences and the Swedish Research Council. We thank Denis Saveliev for useful
comments and discussions. We are really thankful to the referees for very careful reading of the paper, many
very helpful suggestions and for pointing out several subtle inaccuracies in the original version of the paper.

\section{Multistructures and their aliases}\label{s2}

\subsection{Multisemigroups}\label{s2.1}

Multisemigroups, as defined above, have appeared in the literature under many different names. In particular, we
have seen the following names: hypersemigroups, polysemigroups, semihypergroups and  associative multiplicative systems 
(the list disregards different hyphenations of the names). Following \cite{MM2}, we use the name ``multisemigroups'' 
as we think that, compared to all other aliases, it describes the essence of the structure best. Note that the 
operation $*$ of a multisemigroup $(S,*)$ can also be understood as a binary relation from $S\times S$ to $S$ 
(satisfying the usual associativity axiom).

An element $1$ of a multisemigroup $(S,*)$ is called an {\em identity} or {\em unity} 
element provided that $1*a=a*1=a$ for all 
$a\in S$. A standard argument shows that the identity element is unique, if exists. For any multisemigroup $(S,*)$
and any element $e\not\in S$, the set $S\cup\{e\}$ has the natural structure 
$(S\cup\{e\},\circ)$ of a multisemigroup defined, for $a,b\in S\cup\{e\}$, as follows:
\begin{displaymath}
a\circ b:=\begin{cases}a*b,&a,b\in S;\\a,& b=e;\\b,&a=e.\end{cases} 
\end{displaymath}
The element $e$ is the identity of $(S\cup\{e\},\circ)$. As usual, we denote by $S^1$ the multisemigroup 
$S$ if the latter has an identity and $S\cup\{e\}$ defined as above if $S$ does not have any identity
(and we denote this identity by $1$ and  the operation on $S^1$ by $*$).

An element $e$ of a multisemigroup $(S,*)$ is called an {\em idempotent} provided that $e*e=e$ and
a {\em quasi-idempotent} provided that $e\in e*e$.

Multisemigroups are closely connected to some other {\em different} algebraic structures. Here we describe two of 
such connections. A map $*:S\times S\to 2^S$, $(a,b)\mapsto a*b$, can be extended to a binary operation
$*:2^S\times 2^S\to 2^S$ by setting, for $A,B\in 2^S$,
\begin{equation}\label{eq2}
A*B:=\bigcup_{a\in A,\, b\in B} a*b.
\end{equation}
The equality \eqref{eq1} can now be written as $a*(b*c)=(a*b)*c$. In this way, for a multisemigroup 
$(S,*)$, the power set $2^S$ inherits a natural structure of a semigroup, where the associativity follows
directly from \eqref{eq1}. Thus, a multisemigroup can be seen as a non-empty set $S$ along with an 
associative binary operation $*$ on $2^S$ satisfying \eqref{eq2} for any $A,B\subset S$. Notice that to show that
such a binary operation $*$ on $2^S$ defines a multisemigroup on $S$ it is enough to show that 
$a*(b*c)=(a*b)*c$ for all $a,b,c\in S$. From \eqref{eq2} it follows that 
for any $A,B_i\in 2^S$, $i\in I$, we have the following property: 
\begin{equation}\label{eq3}
A*(\cup_i B_i)=\cup_i (A*B_i)\quad\text{ and }\quad (\cup_i B_i)*A=\cup_i (B_i*A). 
\end{equation}

Recall, see e.~g. \cite[2.2]{Gu}, that a {\em semiring} is a tuple $(R,+,*,0,1)$ where $R$ is a non-empty set, $+$ and 
$*$ are binary operations on $R$ and $0$ and $1$ are two distinguished elements of $R$ such that the following 
conditions are satisfied:
\begin{itemize}
\item $(R,+,0)$ is a commutative monoid with identity $0$;
\item $(R,*,1)$ is a monoid with identity $1$;
\item $r*0=0*r=0$ for all $r\in R$ (i.e. $0$ is {\em absorbing} with respect to $*$);
\item $r*(s+t)=r*s+r*t$ and $(s+t)*r=s*r+t*r$ for all $r,s,t\in R$.
\end{itemize}
A semiring $(R,+,*,0,1)$ for which $r+r=r$ for all $r\in R$ is called an {\em idempotent semiring} 
or {\em dioid}. The previous paragraph implies that any multisemigroup $(S,*)$ induces a natural dioid
structure $(2^{S^1},\cup,*,\varnothing,\{1\})$ on $2^{S^1}$. If $Q$ is finite, then every dioid
structure $(2^{Q},\cup,*,\varnothing,1)$, where $1$ is a singleton, gives, by restriction to elements of $Q$, a multisemigroup $(Q,*)$ possessing a unit element.

A notable difference between multisemigroups  and dioids is revealed comparing the arbitrary distributivity
property for multisemigroup given by \eqref{eq3} with the finite distributivity property for dioids. This 
discrepancy motivates connection of multisemigroups with quantales. 

Recall, see \cite{Re}, that a {\em sup-lattice} is a partially ordered set in which every subsets has a {\em supremum}, 
or a {\em join}. A {\em quantale} is a sup-lattice equipped with an associative product $(a,b)\mapsto a*b$ such that 
the multiplication distributes over arbitrary suprema, that is
for any $X\subset Q$ and $a\in Q$ we have 
\begin{displaymath}
a*(\mathrm{sup}_{x\in X}x)=\mathrm{sup}_{x\in X}(a*x)  \text{ and }
(\mathrm{sup}_{x\in X}x)*a=\mathrm{sup}_{x\in X}(x*a). 
\end{displaymath}
A {\em unital quantale} is a quantale possessing a unit element with respect to the product operation. Unital 
quantales form a special class of dioids, the so-called {\em complete dioids}. 

The discussion above shows that any multisemigroup $(S,*)$ induces a natural quantale structure 
on $2^{S}$. Being defined on a power set, this quantale is a {\em complete atomic Boolean algebra}
where atomic means that every element lies above an atom, a minimal nonzero element. Conversely, 
every quantale structure on a complete atomic Boolean algebra $Q$ induces a natural structure of 
a multisemigroup on the set $S=S(Q)$ of atoms of $Q$. This can be used to identify multisemigroups and
quantale structures on complete atomic Boolean algebras.

\subsection{Multigroups and hypergroups}\label{s2.2}

Unlike multisemigroups, whose definition is more or less uncontroversial despite of many different aliases,
there exist many {\em different} multistructures considered in the literature as multivalued analogues of groups. 
The present paper is not about these structures, so to fix terminology for the present paper we will just define 
those of them which we will use later. We refer the reader to \cite{DO,Vi,Wa} and references therein for further
details.

Following \cite{Vo}, a multisemigroup $(S,*)$ will be called a {\em hypergroup} provided that it satisfies
the following {\em reproduction axiom}:  $S*a=a*S=S$ for any $a\in S$.

Following \cite{Vi}, a multisemigroup $(S,*)$ with identity $1$ will be called a {\em multigroup} provided 
that the following two conditions are satisfied:
\begin{itemize}
\item for each $a\in S$ there are unique $b,c\in S$ such that $1\in a*b$ and $1\in c*a$, moreover, $b=c$
(this unique element will be denoted by $a^{-1}$);
\item for any $a,b,c\in S$ we have $c\in a*b$ if and only if $c^{-1}\in b^{-1}*a^{-1}$. 
\end{itemize}
It is clear that $1^{-1}=1$ and $(a^{-1})^{-1}=a$ for any element $a$ of a multigroup $S$
(cf. \cite[3.2]{Vi}).

\subsection{Involution}\label{s2.3}

If $(S,*)$ is a multisemigroup, then an {\em involution} on $S$ is a transformation $\star:S\to S$,
written $s\mapsto s^{\star}$, which is involutive, that is $(s^\star)^\star=s$ for all $s\in S$, and
satisfies, for all $a,b\in S$, the following equality:
\begin{displaymath}
b^{\star}*a^{\star}=\{s^{\star}\,\vert\, s\in a*b\}.
\end{displaymath}
For example, if $(S,*)$ is a multigroup, then $a\mapsto a^{-1}$ is an involution.

\section{Some examples of multisemigroups}\label{s3}

In this section we collect many examples of multisemigroups, for further examples of multigroups we refer
the reader to \cite{Vi} and references therein.

\subsection{The trivial multisemigroups}\label{s3.1}

For any non-empty set $S$ and a subset $X\subset S$ setting $s*_Xt:=X$ for all $s,t\in S$ defines on $S$
the structure of a multisemigroup. In particular, we have two {\em trivial} multisemigroup structures on $S$,
namely, $\diamond:=*_\varnothing$ and $\bullet:=*_S$.
We note that $(S,\bullet)$ is always a hypergroup, but it is not a multigroup if $|S|>1$.

\subsection{A two-element multisemigroup}\label{s3.2}

Define the operation $*$ on $S:=\{a,b\}$ using the following Cayley table:
\begin{displaymath}
\begin{array}{c||c|c}
*&a&b\\
\hline\hline
a&a&\{a,b\}\\
\hline
b&a&b
\end{array} 
\end{displaymath}
It is straightforward to verify that this is a multisemigroup. We note that $x*S=S$ for any $x\in S$ while
$S*a=a\neq S$ (in particular, this is a {\em right} hypergroup but not a hypergroup).

\subsection{The coset multisemigroup}\label{s3.3}

Let $(G,\cdot)$ be a group and $H$ a (not necessarily normal) subgroup of $G$. Define a multivalued operation $*$ on $G$ as 
follows: for every $a,b\in G$ we set $a*b:=HaHb$. It is straightforward to verify that $(a*b)*c=a*(b*c)=HaHbHc$ for all
$a,b,c\in G$ and hence $(G,*)$ is a multisemigroup.

We can also consider the set $H\backslash G$ of left $H$-cosets in $G$. Then for any $a,b\in G$ the set 
$HaHb$ is a union of cosets and hence we may define 
\begin{displaymath}
Ha*Hb:=\{Hc\,\vert\, c\in G\text{ and }Hc\subset HaHb\}.
\end{displaymath}
This turns $(H\backslash G,*)$ into a multisemigroup and even a hypergroup (but not a multigroup if $H$ is not normal
since an identity would be necessarily $H$, and $H$ is normal if and only if, for all $a,b\in G$,
$H\subset HaHb$ implies $Hb=Ha^{-1}$). 
If $H$ is a normal subgroup of $G$, then the operation $*$ on $H\backslash G$ is, in fact, single-valued 
and hence $(H\backslash G,*)$ is a group.

\subsection{Inflations of multisemigroups}\label{s3.4}

Let $(S,*)$ be a multisemigroup, $X$ an arbitrary set, and $f:X\to S$ a surjective map. For $x,y\in X$ define
\begin{displaymath}
x*_f y:=\{z\in X\,\vert\, f(z)\in f(x)*f(y) \}. 
\end{displaymath}
Then it is straightforward to verify that $(X,*_f)$ is a multisemigroup called the {\em inflation} of $S$ with respect 
to $f$. Note that the trivial multisemigroup  $(S,\bullet)$ defined in Subsection \ref{s3.1} can be viewed as an inflation of a singleton group.

\subsection{Multisemigroups of ideals}\label{s3.5}

Let $(S,\cdot)$ be a semigroup. Define multioperations $*_L$ and $\hat{*}_L$ on $S$ as follows: for 
$a,b\in S$ set
\begin{displaymath}
a *_L b:=S^1aS^1b\quad\text{ and }\quad a \hat{*}_L b:=S^1a\cap S^1b.
\end{displaymath}
Then it is straightforward to verify that both $(S,*_L)$ and $(S,\hat{*}_L)$ are multisemigroups.
Similarly one defines multisemigroups $(S,*_R)$ and $(S,\hat{*}_R)$ using right ideals and
multisemigroups $(S,*_{J})$ and $(S,\hat{*}_J)$ using two-sided ideals. The multisemigroups
$(S,\hat{*}_L)$, $(S,\hat{*}_R)$ and $(S,\hat{*}_J)$ are commutative.

\subsection{Monogenic associated multisemigroups}\label{s3.6}

Let $(S,\cdot)$ be a semigroup. For $a\in S$ let $\langle a\rangle$ denote the subsemigroup of $S$ consisting of
all elements of the form $a^i$, $i>0$ (the so-called ``monogenic subsemigroup'' generated by $a$).
Define the multioperation $*$ on $S$ as follows: for  $a,b\in S$ set
\begin{displaymath}
a * b:=\langle a\rangle\cap \langle b\rangle.
\end{displaymath}
Then it is straightforward to verify that $(S,*)$ is a commutative multisemigroup.

\subsection{Multisemigroups of positive bases in associative algebras}\label{s3.7}

Let $A$ be an associative algebra over some subring $\Bbbk$ of real numbers. Assume that  $A$ has a basis 
$\mathbf{a}:=\{a_i\,\vert\, i\in S\}$ with non-negative structure constants, that is
\begin{displaymath}
a_ia_j=\sum_{k\in S}c_{i,j}^k a_k \quad\text{ and }\quad c_{i,j}^k\geq 0\quad\text{ for all }\quad i,j,k\in S. 
\end{displaymath}
Define the multioperation $*$ on $S$ as follows: for  $i,j\in S$ set
\begin{displaymath}
i * j:=\{k\,\vert\, c_{i,j}^k>0\}.
\end{displaymath}
Then the associativity of $A$ implies that $(S,*)$ is a multisemigroup.

A similar construction works if instead of a subring of real numbers one considers, for example, the Boolean
algebra $\mathbb{B}:=\{0,1\}$ (with respect to the usual meet and join operations).

\subsection{Multisemigroups of fully additive bicategories}\label{s3.8}

This example is taken from \cite[Subsection~3.3]{MM2}. Let $\cC$ be a small additive bicategory with skeletally 
small, fully additive and Krull-Schmidt categories 
of morphisms. Let $S[\cC]$ be the set of isomorphism classes of indecomposable $1$-morphisms in $\cC$. 
For an indecomposable $1$-morphism $\mathrm{F}$ we denote by $[\mathrm{F}]$ its class in $S[\cC]$. 
For $[\mathrm{F}],[\mathrm{G}]\in S[\cC]$ set
\begin{displaymath}
[\mathrm{F}]*[\mathrm{G}]:=\{[\mathrm{H}]\in S[\cC]\,:\, \mathrm{H}\text{ is isomorphic to a direct summand of }
\mathrm{F}\circ\mathrm{G}\}.
\end{displaymath}
Then the associativity axiom for $\cC$ implies that $(S[\cC],*)$ is a multisemigroup. Via decategorification
(i.e. taking the split Grothendieck group of $\cC$) this example can be considered as a special case of 
the previous example.

As a more concrete example of this construction, consider $\cC$ to be a bicategory with one object $\bullet$
and such that $\cC(\bullet,\bullet)$ is the category of all finite dimensional representations of a semi-simple 
complex finite dimensional Lie algebra $\mathfrak{g}$, with horizontal composition given by the usual tensor product 
of $\mathfrak{g}$-modules. It is easy to see that in this case the obtained multisemigroup is, in fact, a multigroup. 
In the case of the algebra $\mathfrak{sl}_2$, isomorphism classes of simple finite dimensional modules are in a 
natural bijection with the set  $\mathbb{N}_0$ of non-negative integers (this bijection is given by taking 
the highest weight of a module, see \cite[Theorem~1.22]{Maz2}). From the classical Clebsch-Gordan rule 
(see e.g. \cite[Theorem~1.39]{Maz2}) it follows that for $k,l\in\mathbb{N}_0$ the multisemigroup 
operation is given by the following:
\begin{displaymath}
k*l=\{m\,:\, |k-l|\leq m\leq k+l, m\equiv k+l\text{ mod }2\}.
\end{displaymath}

\begin{remark}\label{remnn1}
{\rm 
The concrete example above can be generalized. For $\mathfrak{g}$ as above  consider the BGG category 
$\mathcal{O}$ and let $\overline{\mathcal{O}}$ be the tensor category which $\mathcal{O}$ generates 
(see e.g. \cite{Kaa}). The category $\overline{\mathcal{O}}$ is no longer semi-simple but all objects
in $\overline{\mathcal{O}}$ have well-defined composition multiplicities (see e.g. \cite{Kaa}).
There is a natural  multisemigroup structure on the set of isomorphism classes of simple objects in
$\overline{\mathcal{O}}$ defined as follows: For two simple objects $L$ and $L'$ define $[L]*[L']$ to 
be the set of isomorphism classes of all simple subquotients which appear in the tensor product of $L$ and $L'$. 
}
\end{remark}

\subsection{The Kazhdan-Lusztig multisemigroup of a Weyl group}\label{s3.9}

Let $\Delta$ be a finite root system and $W$ the corresponding Weyl group. Let $\{\underline{H}_w\,:\,w\in W\}$
be the Kazhdan-Lusztig basis of $\mathbb{Z}[W]$, see \cite{KL} or \cite[Section~7]{Maz}. By \cite{KL}, this basis 
has positive structure constants and hence the construction of Subsection~\ref{s3.7} gives a multisemigroup
structure $(W,*)$. Remark that this multisemigroup can be also obtained by the construction of Subsection~\ref{s3.8}
considering the bicategory of Soergel bimodules acting on the principal block of category $\mathcal{O}$
for the Lie algebra $\mathfrak{g}$ associated with $\Delta$, see \cite[Section~8]{Maz} and \cite{MM1,MM2} for details.
Mapping $w\mapsto w^{-1}$ defines an involution on this multisemigroup (it corresponds to ``taking the adjoint functor''
in the categorical picture of Soergel bimodules).

To give an explicit example, let $\Delta$ be of type $A_2$, that is $W\simeq S_3=\{e,s,t,st,ts,sts\}$, where
$s^2=t^2=e$ and $sts=tst$. Then we have
\begin{gather*}
\underline{H}_e=e,\,\,\, \underline{H}_s=e+s,\,\,\, \underline{H}_t=e+t,\,\,\,\underline{H}_{st}=e+s+t+st,\\ 
\underline{H}_{ts}=e+s+t+ts,\,\, \,  \underline{H}_{sts}=e+s+t+st+ts+sts;
\end{gather*}
and one easily obtains the following Cayley table for $(S_3,*)$:
\begin{displaymath}
\begin{array}{c||c|c|c|c|c|c}
*&e&s&t&st&ts&sts\\
\hline\hline 
e&e&s&t&st&ts&sts\\
\hline
s&s&s&st&st&\{sts,s\}&sts\\
\hline
t&t&ts&t&\{sts,t\}&ts&sts\\
\hline
st&st&\{sts,s\}&st&\{sts,st\}&\{sts,s\}&sts\\
\hline
ts&ts&ts&\{sts,t\}&\{sts,t\}&\{sts,ts\}&sts\\
\hline
sts&sts&sts&sts&sts&sts&sts\\
\end{array}
\end{displaymath}

\subsection{The Boolean Hecke multigroup of a Weyl group}\label{s3.10}

This example is taken from \cite{Tr} where it is given in different terms. Let $\Delta$
be a finite root system and $W$ the corresponding Weyl group. Choose some basis $\pi$ in $\Delta$ and let
$S\subset W$ be the corresponding system of simple reflections. Then $(W,S)$ is a Coxeter system. Let
$\mathfrak{l}:W\to \mathbb{N}_0$ be the corresponding length function. Fix some 
$q\in \mathbb{R}$ such that $q>1$ and let $\mathbb{H}_q$ be the corresponding Hecke algebra (over $\mathbb{R}$), 
that is the associative algebra with generators $H_s$, $s\in S$, satisfying the braid relations for $(W,S)$ together
with the relations 
\begin{equation}\label{eq4}
H_s^2=(q-1)H_s+qH_e,\quad s\in S. 
\end{equation}
Note that under our choice of $q$ the latter relation has positive coefficients.

For each $w\in W$ fix some reduced expression $w=s_1s_2\cdots s_k$ and set $H_w:=H_{s_1}H_{s_2}\cdots H_{s_k}$.
Since the $H_s$'s satisfy braid relations, the element $H_w$ does not depend on the choice of a reduced expression for 
$w$.  Then $\{H_w\,\vert\,w\in W\}$ is the {\em standard} basis of $\mathbb{H}_q$. 

\begin{lemma}\label{lem1}
All structure constants for the standard basis are non-negative. 
\end{lemma}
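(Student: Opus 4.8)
The plan is to prove that the standard basis $\{H_w \mid w \in W\}$ of $\mathbb{H}_q$ has non-negative structure constants by induction on the length $\mathfrak{l}(w)$ of the left factor, reducing the general product $H_w H_v$ to the case of multiplication by a single generator $H_s$. The base case $\mathfrak{l}(w)=0$ is trivial since $H_e$ is the identity. For the inductive step, writing $w = s w'$ with $\mathfrak{l}(w') = \mathfrak{l}(w)-1$, we have $H_w H_v = H_s (H_{w'} H_v)$; if $H_{w'} H_v$ has non-negative coefficients by the inductive hypothesis, then it suffices to show that left multiplication by a single $H_s$ preserves non-negativity of coefficients.

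The key computation, therefore, is to understand the product $H_s H_v$ for a generator $s \in S$ and an arbitrary $v \in W$. Here I would use the standard multiplication rule for the Hecke algebra, which follows directly from the quadratic relation \eqref{eq4} together with the braid relations. The rule splits into two cases according to whether the length goes up or down: if $\mathfrak{l}(sv) > \mathfrak{l}(v)$, then $H_s H_v = H_{sv}$, which manifestly has a single coefficient equal to $1 \geq 0$; if $\mathfrak{l}(sv) < \mathfrak{l}(v)$, then applying \eqref{eq4} gives $H_s H_v = (q-1) H_v + q H_{sv}$. The crucial observation is that, under the hypothesis $q > 1$, both coefficients $q-1$ and $q$ are strictly positive. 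Thus in either case $H_s H_v$ is a non-negative linear combination of standard basis elements.

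Combining these two ingredients completes the argument: starting from $H_v$ (a basis element, hence non-negative) and multiplying on the left successively by $H_{s_k}, \dots, H_{s_1}$ where $w = s_1 \cdots s_k$ is a reduced expression, at each stage we multiply a non-negative combination of basis elements by a single generator. By the distributivity of the product and the single-generator rule above, non-negativity is preserved at every step, since a non-negative linear combination of the non-negative vectors $H_s H_u$ is again non-negative. Hence $H_w H_v = H_{s_1}\cdots H_{s_k} H_v$ has non-negative structure constants, as desired.

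I expect the main subtlety to be purely bookkeeping rather than conceptual: one must make sure the single-generator multiplication rule is correctly derived from the defining relation \eqref{eq4}, in particular getting the length-decreasing case right, and one must invoke that the product of $H_s$ with a \emph{sum} of basis elements distributes so that positivity is inherited term by term. The genuinely essential point, and the reason the statement holds at all, is the restriction $q > 1$, which forces the coefficient $q-1$ to be non-negative; for $0 < q < 1$ this coefficient would be negative and the conclusion would fail. No deep structural input about $W$ is needed beyond the fact that every element has a reduced expression and that $H_w$ is well-defined independently of the chosen reduced word, both of which are already granted in the text preceding the statement.
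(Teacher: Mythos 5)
Your proposal is correct and is essentially the paper's own argument: both reduce to the single-generator product $H_sH_x$ via the definition $H_w=H_{s_1}\cdots H_{s_k}$, split into the length-increasing case ($H_sH_x=H_{sx}$) and the length-decreasing case where the quadratic relation \eqref{eq4} yields $(q-1)H_x+qH_{sx}$, and conclude from $q>1$. The only difference is presentational: you phrase the reduction as an explicit induction on $\mathfrak{l}(w)$, while the paper leaves that bookkeeping implicit.
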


\begin{proof}
As $H_w:=H_{s_1}H_{s_2}\cdots H_{s_k}$, it is enough to show that for any $s\in S$ and $x\in W$ the element
$H_sH_x$ is a linear combination of basis elements with non-negative coefficients. If $\mathfrak{l}(sx)>
\mathfrak{l}(x)$, then we have $H_sH_x=H_{sx}$. In the other case we have $x=sy$ for some $y\in W$ such that
$\mathfrak{l}(y)<\mathfrak{l}(x)$. Then, using \eqref{eq4}, we have
\begin{displaymath}
H_sH_x= H_sH_sH_y=\big((q-1)H_s+qH_e\big)H_y=(q-1)H_x+ q H_y
\end{displaymath}
and the claim follows.
\end{proof}

From Lemma~\ref{lem1} it follows that the construction of Subsection~\ref{s3.7} gives a multisemigroup
structure $(W,*)$. This multisemigroup is called the {\em Boolean Hecke hypermonoid} in \cite{Tr}.
If $\Delta$ is of type $A_2$, then for $W\simeq S_3=\{e,s,t,st,ts,sts\}$ we get the following  Cayley table:
\begin{displaymath}
\begin{array}{c||c|c|c|c|c|c}
*&e&s&t&st&ts&sts\\
\hline\hline 
e&e&s&t&st&ts&sts\\
\hline
s&s&\{e,s\}&st&\{t,st\}&sts&\{ts,sts\}\\
\hline
t&t&ts&\{e,t\}&sts&\{s,ts\}&\{st,sts\}\\
\hline
st&st&sts&\{s,st\}&\{ts,sts\}&\{e,s,sts\}&\{t,st,ts,sts\}\\
\hline
ts&ts&\{t,ts\}&sts&\{e,t,sts\}&\{st,sts\}&\{s,st,ts,sts\}\\
\hline
sts&sts&\{st,sts\}&\{ts,sts\}&\{s,st,ts,sts\}&\{t,st,ts,sts\}&\{e,s,t,st,ts,sts\}\\
\end{array}
\end{displaymath}

\begin{proposition}\label{prop2}
The multisemigroup  $(W,*)$ is, in fact, a multigroup. 
\end{proposition}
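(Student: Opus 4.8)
The identity of $(W,*)$ is the class $e$: since $H_e$ is the unit of $\mathbb{H}_q$ we have $H_eH_w=H_wH_e=H_w$, so $e*w=w*e=w$ for all $w\in W$, and hence $(W,*)$ is a multisemigroup with identity in the sense of Subsection~\ref{s2.1}. The plan is to reduce the two defining conditions of a multigroup to statements about the standard basis. Recall that for the construction of Subsection~\ref{s3.7} we have $a*b=\{c\,\vert\,c^c_{a,b}>0\}$, where $H_aH_b=\sum_c c^c_{a,b}H_c$. I would introduce two tools: the linear form $\tau\colon\mathbb{H}_q\to\mathbb{R}$ with $\tau(H_w)=\delta_{w,e}$ (reading off the coefficient of $H_e$), and the linear anti-automorphism $\iota$ of $\mathbb{H}_q$ determined by $\iota(H_s)=H_s$ for $s\in S$.

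For the first condition I claim that, for all $a,b\in W$,
\begin{equation*}
\tau(H_aH_b)=q^{\mathfrak{l}(a)}\,\delta_{b,a^{-1}}.
\end{equation*}
This is the classical fact that $\{H_w\}$ and $\{q^{-\mathfrak{l}(w)}H_{w^{-1}}\}$ are dual bases with respect to the symmetrizing trace, but I would prove it directly by induction on $\mathfrak{l}(a)$. A one-line check on basis elements gives $\tau(H_sH_u)=\tau(H_uH_s)=q\,\delta_{u,s}$, hence the cyclicity $\tau(H_sx)=\tau(xH_s)$ for all $x\in\mathbb{H}_q$ and $s\in S$. Writing $a=sa'$ with $\mathfrak{l}(a)=\mathfrak{l}(a')+1$, I would move $H_s$ to the right, $\tau(H_aH_b)=\tau(H_{a'}H_bH_s)$, and expand $H_bH_s$ via the two multiplication rules $H_bH_s=H_{bs}$ (if $\mathfrak{l}(bs)>\mathfrak{l}(b)$) or $H_bH_s=(q-1)H_b+qH_{bs}$ (otherwise); applying the inductive hypothesis and checking that the spurious terms vanish because of the matching descent/ascent conditions yields the formula. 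Since $q>1$, the coefficient $\tau(H_aH_b)$ of $H_e$ in $H_aH_b$ is positive precisely when $b=a^{-1}$; symmetrically, $\tau(H_cH_a)=q^{\mathfrak{l}(c)}\delta_{c,a^{-1}}>0$ precisely when $c=a^{-1}$. Thus the required $b$ and $c$ exist, are unique, and both coincide with the group inverse $a^{-1}$, which verifies the first condition.

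For the second condition I would use $\iota$. The defining relations of $\mathbb{H}_q$ (the braid relations together with \eqref{eq4}) are invariant under reversing the order of factors, so $\iota$ is a well-defined anti-automorphism; and for a reduced expression $w=s_1\cdots s_k$ one gets $\iota(H_w)=H_{s_k}\cdots H_{s_1}=H_{w^{-1}}$, since reversing a reduced word produces a reduced word for $w^{-1}$. Applying $\iota$ to $H_aH_b=\sum_c c^c_{a,b}H_c$ gives $H_{b^{-1}}H_{a^{-1}}=\sum_c c^c_{a,b}H_{c^{-1}}$, so the coefficient of $H_c$ in $H_aH_b$ equals the coefficient of $H_{c^{-1}}$ in $H_{b^{-1}}H_{a^{-1}}$. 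Hence $c\in a*b$ if and only if $c^{-1}\in b^{-1}*a^{-1}$, which is exactly the second condition (and incidentally reconfirms that $w\mapsto w^{-1}$ is an involution on $(W,*)$). I expect the main obstacle to be the trace computation of the previous paragraph: although the final formula is standard, the inductive step requires carefully matching the descent conditions on $b$ so that exactly the term $b=a^{-1}$ survives, and one must first secure the cyclicity $\tau(H_sx)=\tau(xH_s)$ that legitimizes the peeling-off argument.
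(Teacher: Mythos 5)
Your proof is correct, and for the first multigroup condition it takes a genuinely different route from the paper. The paper argues entirely inside the multisemigroup $(W,*)$: one induction on $\mathfrak{l}(x)$ gives $e\in x*x^{-1}$, and a second, more delicate induction shows that for $x\neq y^{-1}$ every $w\in x*y$ satisfies $\mathfrak{l}(w)\geq \mathfrak{l}(y)-\mathfrak{l}(x)$, pins down when that bound can be attained, and concludes $e\notin x*y$. You instead work in $\mathbb{H}_q$ itself and prove the exact identity $\tau(H_aH_b)=q^{\mathfrak{l}(a)}\delta_{b,a^{-1}}$ (the symmetric-algebra, or dual-basis, property of the standard basis), which yields existence and uniqueness of the inverse in one stroke. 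Your inductive step does close: writing $a=sa'$, cyclicity gives $\tau(H_aH_b)=\tau(H_{a'}H_bH_s)$; in the ascent case $\mathfrak{l}(bs)>\mathfrak{l}(b)$ the induction hypothesis leaves $\delta_{bs,(a')^{-1}}$, i.e.\ exactly $b=a^{-1}$; in the descent case the dangerous term $(q-1)\tau(H_{a'}H_b)$ vanishes because $b=(a')^{-1}$ would force $bs=a^{-1}$ and hence $\mathfrak{l}(bs)=\mathfrak{l}(a')+1>\mathfrak{l}(b)$, contradicting descent, so only $q\,\tau(H_{a'}H_{bs})$ survives and produces the factor $q^{\mathfrak{l}(a')+1}=q^{\mathfrak{l}(a)}$ precisely when $b=a^{-1}$. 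Your treatment of the second condition is the same as the paper's (the anti-involution determined by $H_s\mapsto H_s$, sending $H_w\mapsto H_{w^{-1}}$). As for what each approach buys: the paper's combinatorial induction produces finer structural information about the whole product $x*y$ (a length filtration on its elements and when the minimum is achieved), in keeping with the multisemigroup viewpoint; your trace argument is conceptually cleaner, gives the precise coefficient of $H_e$ rather than mere (non)vanishing, and exposes the classical reason behind uniqueness of inverses, its only overhead being the bookkeeping in the trace induction, which you correctly flagged as the crux.
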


\begin{proof}
We have to check that both additional conditions from Subsection~\ref{s2.2} are satisfied.
Let $x\in W$. First we show, by induction on $\mathfrak{l}(x)$, that $e\in x*x^{-1}$. This is clear if $x=e$, so to 
prove the induction step assume that the claim is true for some $x$ and that $s\in S$ is such that $\mathfrak{l}(sx)>
\mathfrak{l}(x)$. Then, using associativity and definitions, we have $(sx)*(sx)^{-1}=s*x*x^{-1}*s$. The latter
set contains $s*e*s=\{e,s\}$ as $e\in x*x^{-1}$ by the inductive assumption.

Now let $x,y\in W$ be such that $x\neq y^{-1}$. Consider the case $\mathfrak{l}(x)\leq \mathfrak{l}(y)$, in particular,
$y\neq e$ (the other case is dealt with by similar arguments). Let us prove, by induction on $\mathfrak{l}(x)$, 
the following three claims: 
\begin{enumerate}[(i)]
\item\label{claim1} any $w\in x*y$ satisfies $\mathfrak{l}(w)\geq \mathfrak{l}(y)-\mathfrak{l}(x)$;
\item\label{claim2} $x*y$ contains some $w$ satisfying $\mathfrak{l}(w)=\mathfrak{l}(y)-\mathfrak{l}(x)$
if and only if $y=x^{-1}w$;
\item\label{claim3} $e\not\in x*y$.
\end{enumerate}
Note that the last claim obviously follows from the first two.
The basis $x=e$ of the induction is obvious. To prove the induction step let $s\in S$ be a simple reflection such that
$\mathfrak{l}(x)<\mathfrak{l}(sx)\leq \mathfrak{l}(y)$. Then we have $sx=s*x$ and hence $(sx)*y=s*(x*y)$ by 
associativity. By the inductive assumption, any $w\in x*y$ satisfies 
$\mathfrak{l}(w)\geq \mathfrak{l}(y)-\mathfrak{l}(x)$. We have $s*w=sw$ if $\mathfrak{l}(sw)>\mathfrak{l}(w)$
and $s*w=\{w,sw\}$ otherwise. In the second case $\mathfrak{l}(sw)\geq \mathfrak{l}(y)-\mathfrak{l}(x)-1=
\mathfrak{l}(y)-\mathfrak{l}(sx)$ which proves claim \eqref{claim1}. This argument also shows that if
$k$ is maximal such that $x*y$ contains some $w$ of length $\mathfrak{l}(y)-k$, then every element in
$s*(x*y)$ has length at least $\mathfrak{l}(y)-k-1$. Therefore, for $s*(x*y)$ to contain an element of length
$\mathfrak{l}(y)-\mathfrak{l}(sx)$, the set $x*y$ must contain an element of length 
$\mathfrak{l}(y)-\mathfrak{l}(x)$. By \eqref{claim2} of the inductive assumption, the only element of $x*y$ with 
this property is $xy$ and this is possible if and only if  $\mathfrak{l}(xy)=\mathfrak{l}(y)-\mathfrak{l}(x)$. 
In the latter case $s*xy$ contains an element of length $\mathfrak{l}(y)-\mathfrak{l}(x)-1$ if and only
if $\mathfrak{l}(sxy)=\mathfrak{l}(y)-\mathfrak{l}(x)-1$ and this is the case if and only if 
$\mathfrak{l}(sxy)=\mathfrak{l}(y)-\mathfrak{l}(sx)$ which implies claim \eqref{claim2}.
This proves the first condition from Subsection~\ref{s2.2}.

Mapping $H_s\mapsto H_{s}$ extends to an anti-involution on $\mathbb{H}_q$ (as the ideal generated by the defining
relations is invariant under this map). This anti-involution maps $H_w$ to $H_{w^{-1}}$ for any $w\in W$ and hence
mapping $w\mapsto w^{-1}$ is an involution on $(W,*)$. This proves the second condition from Subsection~\ref{s2.2}.
\end{proof}

\subsection{Double variants of multisemigroups}\label{s3.11}

Let $(S,\cdot)$ and $(S,\circ)$ be two multisemigroups with the same underlying set $S$.  Assume further that for any $a,b,c\in S$ we have the following equalities in $2^S$:
\begin{equation}\label{eq5}
(a\cdot b)\circ c= a\cdot (b\circ c)\quad\text{ and }\quad
(a\circ b)\cdot c= a\circ (b\cdot c). 
\end{equation}
For $a,b\in S$ set $a*b:=(a\cdot b)\cup(a\circ b)$.

\begin{proposition}\label{prop3}
$(S,*)$ is a multisemigroup.  
\end{proposition}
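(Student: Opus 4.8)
The plan is to verify the associativity axiom \eqref{eq1} for $*$ directly, by expanding both sides into unions of the four possible ``mixed'' products and matching them term by term. As noted right after \eqref{eq2}, it suffices to check the element-level identity $a*(b*c)=(a*b)*c$ for all $a,b,c\in S$, where $*$ is extended to subsets via \eqref{eq2}. Since the distributivity property \eqref{eq3} follows purely from that extension and not from associativity, I am free to distribute $*$ over unions from the outset.

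First I would expand the left-hand side. Writing $b*c=(b\cdot b)$, more precisely $b*c=(b\cdot c)\cup(b\circ c)$, and distributing via \eqref{eq3}, then unfolding the outer $*$ through $a*X=(a\cdot X)\cup(a\circ X)$, I obtain
\begin{displaymath}
a*(b*c)=\bigl(a\cdot(b\cdot c)\bigr)\cup\bigl(a\circ(b\cdot c)\bigr)\cup\bigl(a\cdot(b\circ c)\bigr)\cup\bigl(a\circ(b\circ c)\bigr).
\end{displaymath}
The same computation applied to the right-hand side gives
\begin{displaymath}
(a*b)*c=\bigl((a\cdot b)\cdot c\bigr)\cup\bigl((a\cdot b)\circ c\bigr)\cup\bigl((a\circ b)\cdot c\bigr)\cup\bigl((a\circ b)\circ c\bigr).
\end{displaymath}

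It then remains to identify these two four-term unions. The two ``pure'' terms match by the associativity of the given multisemigroups: $a\cdot(b\cdot c)=(a\cdot b)\cdot c$ and $a\circ(b\circ c)=(a\circ b)\circ c$ are precisely \eqref{eq1} for $(S,\cdot)$ and $(S,\circ)$. The two ``mixed'' terms are exactly the ones governed by the hypotheses \eqref{eq5}: the first equality there reads $(a\cdot b)\circ c=a\cdot(b\circ c)$ and the second reads $(a\circ b)\cdot c=a\circ(b\cdot c)$, so the remaining cross terms coincide as well. Hence the four summands pair off bijectively, the two unions are equal, and $*$ satisfies \eqref{eq1}.

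I do not anticipate a genuine conceptual obstacle; the only point requiring care is the bookkeeping of the subset extension. One must ensure that each mixed product occurring in the expansion is interpreted via \eqref{eq2}, so that it matches the mixed products appearing in \eqref{eq5} verbatim --- that is, that e.g.\ $a\cdot(b\circ c)$ is read as $\bigcup_{t\in b\circ c}a\cdot t$ and $(a\cdot b)\circ c$ as $\bigcup_{s\in a\cdot b}s\circ c$. Once this identification is made, the hypotheses \eqref{eq5} apply directly at the level of these unions and the proof closes with no residual calculation.
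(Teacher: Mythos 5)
Your proof is correct and takes essentially the same route as the paper's: the paper likewise uses the distributivity \eqref{eq3} to expand into the four cross terms $\big((a\cdot b)\cdot c\big)\cup\big((a\cdot b)\circ c\big)\cup\big((a\circ b)\cdot c\big)\cup\big((a\circ b)\circ c\big)$, matches the pure terms by associativity of $(S,\cdot)$ and $(S,\circ)$ and the mixed terms by \eqref{eq5}, and reassembles the union into $a*(b*c)$. The only difference is presentational (the paper writes a single chain of equalities starting from $(a*b)*c$ rather than expanding both sides separately), and your observation that \eqref{eq3} holds for any multivalued operation, independently of associativity, correctly justifies the expansion.
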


\begin{proof}
For $a,b,c\in S$ we have the following:
\begin{displaymath}
\begin{array}{rcl}
(a*b)*c&=&\big((a\cdot b)\cup(a\circ b)\big)*c\\
&=&\big((a\cdot b)*c\big)\cup\big((a\circ b)*c\big)\\
&=&\big((a\cdot b)\cdot c\big)\cup\big((a\cdot b)\circ c\big)\cup\big((a\circ b)\cdot c\big)\cup
\big((a\circ b)\circ c\big)\\
&\overset{\eqref{eq5}}{=}&\big(a\cdot (b\cdot c)\big)\cup\big(a\cdot (b\circ c)\big)\cup
\big(a\circ (b\cdot c)\big)\cup \big(a\circ (b\circ c)\big)\\
&=&\big(a\cdot (b\cdot c)\big)\cup\big(a\circ (b\cdot c)\big)\cup
\big(a\cdot (b\circ c)\big)\cup \big(a\circ (b\circ c)\big)\\
&=&\big(a*(b\cdot c)\big)\cup\big(a*(b\circ c)\big)\\
&=&a*\big((b\cdot c)\cup(b\circ c)\big)\\
&=& a*(b*c).
\end{array}
\end{displaymath}
The claim follows.
\end{proof}

A typical situation in which this construction applies is the following multisemigroup version of the {\em variant}
(or {\em sandwich}) construction for semigroups: Let $(S,\bowtie)$ be a multisemigroup and $X,Y\subset S$. Then for $a,b\in S$ set
\begin{displaymath}
a\cdot b:=a\bowtie X\bowtie b,\quad \text{ and }\quad a\circ b:=a\bowtie Y\bowtie b.
\end{displaymath}
Then both $(S,\cdot)$ and $(S,\circ)$ are {\em variant} multisemigroups of $(S,\bowtie)$, moreover, condition
\eqref{eq5} is obviously satisfied. Thus $(S,*)$ is a new multisemigroup which is natural to call a
{\em double variant} of $(S,\bowtie)$. It is easy to see that
$a*b= a \bowtie (X\cup Y) \bowtie b$. In the case when $(S,\bowtie)$ is a semigroup and $\vert X\vert =\vert Y\vert =1$ we have that $(S,*)$ is a multisemigroup such that $1\leq \vert a*b\vert\leq 2$ for all $a,b\in S$.

\subsection{Multisemigroup of subwords}\label{s3.12}

Let $A$ be an alphabet and $A^*$ the monoid of all finite words over $A$. For $u,v\in A^*$ define
$u\circledast v$ to be the set of all scattered (that is, not necessarily connected) subwords of $uv$. It is straightforward 
to verify that for any $u,v,w\in A^*$ both $(u\circledast v)\circledast w$ and $u\circledast (v\circledast w)$
coincide with the set of all scattered subwords of $uvw$ and hence $(A^*,\circledast)$ is a multisemigroup.

\subsection{Disconnected unions of multisemigroups}\label{s3.13}

Let $(S,\cdot)$ and $(T,\bullet)$ be multisemigroups and assume that $S\cap T=\varnothing$. Define a multivalued
operation $*$ on $S\cup T$ as follows: for $a,b\in S\cup T$ set
\begin{displaymath}
a* b:=\begin{cases}a\cdot b,&a,b\in S;\\a\bullet b,&a,b\in T;\\\varnothing,&\text{otherwise}.\end{cases}
\end{displaymath}
It is straightforward to verify that this turns $(S\cup T,*)$ into a multisemigroup, which we call the
{\em disconnected union} of $S$ and $T$. 
                                         
\subsection{Reproductive construction}\label{s3.14}

The following general approach to construction of multisemigroups is inspired by \cite{Vo}. Let $(S,\cdot)$ 
be a semigroup and $f:S\to 2^S$ a map. For $A\subset S$ set $f(A):=\cup_{a\in A}f(a)$.
For $a,b\in S$ define $a*b:=f(a)f(b)$. 

\begin{lemma}\label{lem11}
Assume that for any $a,b\in S$ we have $f(f(a)f(b))=f(a)f(b)$. Then 
$(S,*)$ is a multisemigroup. 
\end{lemma}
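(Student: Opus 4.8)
The plan is to verify the associativity axiom in the form $a*(b*c)=(a*b)*c$, which by the remark following \eqref{eq2} suffices to establish that $(S,*)$ is a multisemigroup. I would compute both sides directly using the definition $a*b:=f(a)f(b)$, the extension \eqref{eq2} of the multioperation to subsets, and the hypothesis $f(f(a)f(b))=f(a)f(b)$. The key point is that expanding a product like $a*(b*c)$ forces us to apply $f$ to the set $b*c=f(b)f(c)$, and the absorption hypothesis is exactly what collapses this back to $f(b)f(c)$.

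First I would expand the left-hand side. By definition $b*c=f(b)f(c)$, so $a*(b*c)=\bigcup_{t\in f(b)f(c)} a*t=\bigcup_{t\in f(b)f(c)} f(a)f(t)$. Now $\bigcup_{t\in f(b)f(c)} f(t)=f\big(f(b)f(c)\big)$ by the convention $f(A)=\bigcup_{a\in A}f(a)$, and using distributivity \eqref{eq3} in $2^S$ I can pull the fixed factor $f(a)$ out of the union to get $f(a)\cdot f\big(f(b)f(c)\big)$. Applying the hypothesis $f\big(f(b)f(c)\big)=f(b)f(c)$ yields $a*(b*c)=f(a)f(b)f(c)$, where the product is taken in the ambient semigroup $(S,\cdot)$ extended to subsets.

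Next I would run the symmetric computation on the right-hand side: $a*b=f(a)f(b)$, so $(a*b)*c=\bigcup_{s\in f(a)f(b)} s*c=\bigcup_{s\in f(a)f(b)} f(s)f(c)=f\big(f(a)f(b)\big)f(c)$, again using \eqref{eq3} to factor out $f(c)$ on the right. The hypothesis then collapses $f\big(f(a)f(b)\big)$ to $f(a)f(b)$, giving $(a*b)*c=f(a)f(b)f(c)$. Since the product of subsets in the semigroup $(S,\cdot)$ is associative, both sides equal $f(a)f(b)f(c)$ and hence coincide.

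I do not anticipate a genuine obstacle here; the argument is essentially a bookkeeping exercise in moving $f$ across products and invoking the absorption hypothesis at the right moment. The one place to be careful is the bookkeeping identity $\bigcup_{t\in f(b)f(c)} f(t)=f\big(f(b)f(c)\big)$, which is really just the definition of $f$ on a subset but must be applied to the \emph{product} set $f(b)f(c)$ rather than to $b*c$ naively; making sure the hypothesis is stated for and applied to precisely this product is the only subtle point. Everything else follows from the distributivity \eqref{eq3} of set-multiplication over unions and the associativity of set-multiplication inherited from $(S,\cdot)$.
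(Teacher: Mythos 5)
Your proof is correct and follows essentially the same route as the paper's: expand each triple product as a union, factor out the fixed term, apply the absorption hypothesis $f\bigl(f(a)f(b)\bigr)=f(a)f(b)$, and conclude both sides equal $f(a)f(b)f(c)$ (the paper writes out only $(a*b)*c$ and says the other side is similar). One cosmetic note: the distributivity you invoke is that of the set-product in the ambient semigroup $(S,\cdot)$ over unions (elementary complex multiplication), not literally \eqref{eq3}, which concerns the multioperation $*$ — but as you yourself observe, this is the fact actually being used, so nothing is at stake.
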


The condition $f(f(a)f(b))=f(a)f(b)$ resembles the {\em reproductive condition} in \cite{Vo}.

\begin{proof}
Using our assumption, for $a,b,c\in S$ we have:
\begin{displaymath}
(a*b)*c=\bigcup_{s\in f(a)f(b)}\big(f(s)f(c)\big)=\left(\bigcup_{s\in f(a)f(b)}f(s)\right)f(c)=
f\big(f(a)f(b)\big)f(c)=f(a)f(b)f(c).
\end{displaymath}
Similarly one checks that $a*(b*c)=f(a)f(b)f(c)$.
\end{proof}

Some of the previous examples can be obtained using the reproductive construction. For instance,
the example in Subsection~\ref{s3.12} is obtained if we define $f$ to be the map which sends a word 
$w$ to the set of all scattered subwords of $w$; the first example in Subsection~\ref{s3.3} is obtained
if we define $f$ to be the map which sends $a$ to $Ha$; and the first example in Subsection~\ref{s3.5} 
is obtained if we define $f$ to be the map which sends $a$ to $S^1a$.

\subsection{A hypergroup without quasi-idempotents}\label{s3.15}

Let $S$ be a set satisfying $|S|\geq 3$. For $a,b\in S$ we define
\begin{displaymath}
a*b:=\begin{cases}S,&a\neq b;\\S\setminus\{a\},&a=b.\end{cases} 
\end{displaymath}
Then $|a*b|> 1$ for any $a,b\in S$, which implies $(a*b)*c=a*(b*c)=S$ for all $a,b,c\in S$. 
Thus $(S,*)$ is a multisemigroup. Obviously, $S$ is a hypergroup and it does not contain any quasi-idempotent.

\subsection{Multisemigroups of ultrafilters}\label{s3.25}

Multisemigroups arise in the recent paper \cite{GGP}. Let $X$ be a set. A {\em filter} on $X$ is a filter 
of the Boolean algebra $2^X$, that is a non-empty collection $F$ of subsets of $X$ such that
\begin{enumerate}
\item $\varnothing\not\in F$.
\item If $A\in F$ and $B\supseteq A$ then $B\in F$.
\item If $A,B\in F$ then $A\cap B\in F$.
\end{enumerate}

An {\em ultrafilter} is a filter which is maximal with respect to inclusion. Denote by $\beta X$ the set of 
ultrafilters on $X$. From Zorn's lemma it follows that any filter is contained in some ultrafilter. 
If some filter is not an ultrafilter, this filter is contained in more than one ultrafilter.

Assume that $M$ is a monoid. 

\begin{lemma} Let $p,q\in \beta M$. Then the set  
$$\{XY: X\in p, Y\in q\}^{\uparrow}=\{A\in 2^M: A\supset XY \text{ for some } X\in p, Y\in q\}$$ 
is a filter that we call the {\em filter of supersets of} $pq$.
\end{lemma}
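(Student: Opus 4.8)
The plan is to verify directly that the collection
\[
\mathcal{F}:=\{A\in 2^M: A\supset XY \text{ for some } X\in p,\, Y\in q\}
\]
satisfies the three defining axioms of a filter listed earlier in this subsection. Throughout I would write elements of the product $XY$ in the monoid sense, that is $XY=\{xy: x\in X,\, y\in Y\}$, and I would freely use the basic fact that $p$ and $q$, being ultrafilters, are in particular filters closed under finite intersection.

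\textbf{Axiom (2) (upward closure) and nonemptiness.} I would dispose of these two first, as they are essentially immediate from the way $\mathcal{F}$ is defined. The superscript $\uparrow$ builds upward closure into the definition: if $A\in\mathcal{F}$ via $A\supset XY$ and $B\supseteq A$, then $B\supset XY$ for the same $X,Y$, so $B\in\mathcal{F}$. For nonemptiness, note that $M\in p$ and $M\in q$ (an ultrafilter on $M$ always contains the whole set), so $MM=M\in\mathcal{F}$; in particular $\mathcal{F}\neq\varnothing$.

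\textbf{Axiom (3) (closure under finite intersection).} This is the heart of the argument and the step I expect to be the main obstacle. Suppose $A,B\in\mathcal{F}$, witnessed by $A\supset X_1Y_1$ and $B\supset X_2Y_2$ with $X_1,X_2\in p$ and $Y_1,Y_2\in q$. I would set $X:=X_1\cap X_2$ and $Y:=Y_1\cap Y_2$; since $p$ and $q$ are closed under intersection, $X\in p$ and $Y\in q$. The key inclusion to check is $XY\subseteq X_1Y_1\cap X_2Y_2$, which holds because any $xy$ with $x\in X\subseteq X_1$ and $y\in Y\subseteq Y_1$ lies in $X_1Y_1$, and symmetrically in $X_2Y_2$. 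Hence $XY\subseteq (X_1Y_1)\cap(X_2Y_2)\subseteq A\cap B$, so $A\cap B\supset XY$ with $X\in p$, $Y\in q$, giving $A\cap B\in\mathcal{F}$. The subtle point worth stating explicitly is that the product $XY$ shrinks when $X$ and $Y$ shrink, so intersecting the witnesses produces a common witness for both members.

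\textbf{Axiom (1) ($\varnothing\notin\mathcal{F}$).} Finally I would check that $\mathcal{F}$ is proper. Here I would use that $p$ and $q$ are genuine filters, so $\varnothing\notin p$ and $\varnothing\notin q$; thus for any witnessing $X\in p$ and $Y\in q$ both $X$ and $Y$ are nonempty, and since $M$ is a monoid (and in particular $XY$ contains $xy$ for any $x\in X$, $y\in Y$), the product $XY$ is nonempty. Therefore no $A\in\mathcal{F}$ can be empty, as each such $A$ contains a nonempty set $XY$. Assembling the three axioms shows $\mathcal{F}$ is a filter, completing the proof.
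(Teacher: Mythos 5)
Your proof is correct and follows essentially the same route as the paper: the key step, closure under finite intersection, is handled exactly as in the paper's proof via the inclusion $(X_1\cap X_2)(Y_1\cap Y_2)\subseteq X_1Y_1\cap X_2Y_2$. The only difference is that you spell out the remaining axioms (properness, upward closure, nonemptiness) which the paper dismisses as immediate, and your verifications of these are accurate.
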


\begin{proof} Let $X,P\in p$ and $Y,Q\in q$. Then $X\cap P\in p$, $Y\cap Q\in q$ and $(X\cap P)(Y\cap Q)\subset XY\cap PQ$.  It follows that $XY\cap PQ\in \{XY: X\in p, Y\in q\}^{\uparrow}$. The other axioms are immediate to check.
\end{proof}

Following \cite[Example 3.1]{GGP}, define a multivalued operation $\circ$ on $\beta M$ as follows.  
$$
p\circ q= \{f\in \beta M\colon f\supseteq \{XY: X\in p, Y\in q\}^{\uparrow}\}.
$$

\begin{proposition} $(\beta M,\circ)$ is a multisemigroup.
\end{proposition}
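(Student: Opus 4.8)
The plan is to verify the associativity axiom \eqref{eq1} for $\circ$ on $\beta M$. The key observation is that the multioperation $\circ$ is essentially a ``membership'' operation: $f \in p \circ q$ precisely when $f$ is an ultrafilter \emph{extending} the filter of supersets of $pq$. So I would first reformulate what $\bigcup_{t \in q \circ r} p \circ t$ and $\bigcup_{s \in p \circ q} s \circ r$ actually mean, and then show that both equal the set of ultrafilters extending some common filter naturally associated to the triple $(p,q,r)$.

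First I would introduce notation for the filter of supersets, writing $F(p,q) := \{XY : X \in p,\, Y \in q\}^{\uparrow}$, which the preceding lemma guarantees is a filter. For a filter $G$, let $\mathrm{Ext}(G) := \{f \in \beta M : f \supseteq G\}$ denote the set of ultrafilters extending $G$, so that $p \circ q = \mathrm{Ext}(F(p,q))$. The natural candidate for the ``common filter'' of the triple is $F(p,q,r) := \{XYZ : X \in p,\, Y \in q,\, Z \in r\}^{\uparrow}$, where $XYZ$ uses the associative product in $M$; one should first check this is a filter by the same argument as in the lemma (using that $M$ is a monoid, so products of sets are associative, and that $(X \cap P)(Y \cap Q)(Z \cap R) \subseteq XYZ \cap PQR$).

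The heart of the proof is then the following claim: for any ultrafilters $p, q, r$,
\begin{displaymath}
\bigcup_{t \in q \circ r} (p \circ t) = \mathrm{Ext}\big(F(p,q,r)\big) = \bigcup_{s \in p \circ q} (s \circ r).
\end{displaymath}
By symmetry it suffices to establish the left equality. For the inclusion $\subseteq$, take $t \in q \circ r$ and $f \in p \circ t$; then $t \supseteq F(q,r)$ and $f \supseteq F(p,t)$, and one checks directly that every generator $XYZ$ of $F(p,q,r)$ lies in $f$: since $YZ \in F(q,r) \subseteq t$ we get $X(YZ) \in F(p,t) \subseteq f$, so $f \supseteq F(p,q,r)$. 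The reverse inclusion $\supseteq$ is where the main obstacle lies, because it is genuinely an existence statement: given $f \supseteq F(p,q,r)$, I must produce an intermediate ultrafilter $t$ with $t \supseteq F(q,r)$ and $f \supseteq F(p,t)$.

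I expect the hard part to be constructing this witness $t$, and the natural route is to build a suitable filter and invoke the fact (recalled just before the lemma, via Zorn) that any filter extends to an ultrafilter. The candidate is the filter $t_0$ generated by $F(q,r)$ together with all sets $B \subseteq M$ having the property that $XB \in f$ for every $X \in p$; one verifies this collection has the finite intersection property (so generates a proper filter) precisely because $f \supseteq F(p,q,r)$ guarantees compatibility, and then any ultrafilter $t \supseteq t_0$ satisfies both $t \supseteq F(q,r)$ and, by the defining property of the adjoined sets, $f \supseteq F(p,t)$. Confirming that this collection really is closed under the filter axioms and has the finite intersection property is the one step that requires care rather than routine bookkeeping, since it is exactly here that the monoid structure and the hypothesis $f \supseteq F(p,q,r)$ must be used in tandem. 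Once the claim is proved, associativity \eqref{eq1} is immediate and the proposition follows.
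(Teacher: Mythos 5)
Your overall plan is the same as the paper's (the paper's proof is a one-line "routine to verify" of exactly your displayed claim, that both bracketings equal $\mathrm{Ext}(F(p,q,r))$), and your easy inclusion $\bigcup_{t\in q\circ r}(p\circ t)\subseteq\mathrm{Ext}(F(p,q,r))$ is correct. But your construction of the witness $t$ in the hard inclusion is wrong, in two ways. Write $\mathcal{B}:=\{B\subseteq M: XB\in f\text{ for all }X\in p\}$. First, the condition $F(p,t)\subseteq f$ says that \emph{every} member of $t$ lies in $\mathcal{B}$, i.e. $t\subseteq\mathcal{B}$: it is an upper bound on $t$, not a lower bound. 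Adjoining all of $\mathcal{B}$ to $F(q,r)$ and extending to an ultrafilter produces $t\supseteq\mathcal{B}$, and "the defining property of the adjoined sets" says nothing about the many other sets an ultrafilter extension acquires; those need not lie in $\mathcal{B}$. Second, your claimed finite intersection property genuinely fails: take $M=(\mathbb{N}_0,+)$ and $p=q=r=f$ a nonprincipal idempotent ultrafilter ($p+p=p$, which exists by Ellis's theorem). Then $F(p,p,p)\subseteq p+p+p=p=f$, so the hypothesis $f\supseteq F(p,q,r)$ holds; yet $\{0\}\in\mathcal{B}$ (since $X+\{0\}=X\in f$ for every $X\in p$) while also $\mathbb{N}_{\geq 2}=\mathbb{N}_{\geq 1}+\mathbb{N}_{\geq 1}\in F(q,r)\subseteq\mathcal{B}$, and these two sets are disjoint. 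So your $t_0$ is not a proper filter and no ultrafilter contains it.

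The repair is to encode the upper constraint $t\subseteq\mathcal{B}$ as a lower one via complements: for an ultrafilter $t$, the condition $t\subseteq\mathcal{B}$ is equivalent to $M\setminus B\in t$ for every $B\notin\mathcal{B}$. So consider the family $F(q,r)\cup\{M\setminus B: B\notin\mathcal{B}\}$ and verify it has the finite intersection property; this is exactly where the primeness of the ultrafilter $f$ (a finite union in $f$ must have a member in $f$) enters, and it cannot be avoided. Indeed, suppose $A\supseteq YZ$ with $Y\in q$, $Z\in r$, and $B_1,\dots,B_m\notin\mathcal{B}$ satisfy $A\subseteq B_1\cup\dots\cup B_m$. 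Choose $X_i\in p$ with $X_iB_i\notin f$ and set $X=X_1\cap\dots\cap X_m\in p$; then $XB_i\subseteq X_iB_i$ forces $XB_i\notin f$ for each $i$ (as $f$ is upward closed), while $XA\supseteq XYZ\in F(p,q,r)\subseteq f$ gives $XB_1\cup\dots\cup XB_m\supseteq XA\in f$, so primeness of $f$ puts some $XB_i$ in $f$, a contradiction. Any ultrafilter $t$ extending this family then satisfies $t\supseteq F(q,r)$ and $t\subseteq\mathcal{B}$, that is $f\supseteq F(p,t)$, which is the hard inclusion; the other bracketing is symmetric (multiply by elements of $r$ on the right). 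With this replacement your argument becomes a complete and correct expansion of what the paper leaves as routine.
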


\begin{proof}
Let $p,q,r\in\beta M$. Let $S=\{XYZ: X\in p, Y\in Q, Z\in r\}^{\uparrow}$.
It is routine to verify that both $(p\circ q)\circ r$ and $p\circ (q\circ r)$ equal to the set of ultrafilters containing the filter $S$.
\end{proof}

One can similarly verify that, in general, the ternary relation that arises in \cite[Section 3]{GGP}, defines a multivalued multiplication. In particular, \cite[Examples 3.2 and 4.1]{GGP} are examples of multisemigroups.

\section{Elementary properties of multisemigroups}\label{s4}

In this section we provide multisemigroup analogues of some basic notions from semigroup theory as well as record some basic properties of  multisemigroups.

\subsection{Ideals and Green's relations}\label{s4.1}

Let $(S,*)$ be a multisemigroup. A subset $I\subset S$ is called a {\em left ideal} (resp. {\em right ideal}, 
{\em two-sided ideal}) provided that for any $a\in I$ and $s\in S$ we have $s*a\subset I$ (resp. $a*s\subset I$;
$a*s,s*a\subset I$). For example, for every $a\in S$ the set $S^1*a$ is the smallest left ideal containing 
$a$, called the 
{\em principal left ideal} generated by $a$. Similarly one has the {\em principal right ideal} $a*S^1$ and the
{\em principal two-sided ideal} $S^1*a*S^1$. We define the left pre-order $\leq _L$, the right pre-order
$\leq_R$ and the two-sided pre-order $\leq_J$ on $S$ as follows: for $a,b\in S$ set $b\leq_L a$ if and only if
$S^1*b\subset S^1*a$, $b\leq_R a$ if and only if $b*S^1\subset a*S^1$, and $b\leq_J a$ if and only if 
$S^1*b*S^1\subset S^1*a*S^1$. 

Following \cite{Gr}, we define an equivalence relation $\mathcal{L}$ on $S$ as the equivalence relation
induced by $\leq_L$, i.e. $a\,\mathcal{L}\,b$ if and only if $a\leq_L b$ and
$b\leq_L a$. Similarly we define relations $\mathcal{R}$ and $\mathcal{J}$ (see also \cite{Ha,MM2}). We set $\mathcal{H}:=\mathcal{L}\cap\mathcal{R}$ and denote
by $\mathcal{D}$ the minimal equivalence relation containing both $\mathcal{L}$ and $\mathcal{R}$. 
The relations $\mathcal{L}$, $\mathcal{R}$, $\mathcal{J}$, $\mathcal{H}$ and $\mathcal{D}$ are called
{\em Green's relations}. Obviously,
$\mathcal{D}\subset \mathcal{J}$. Note that the equality ${\mathcal L}\circ {\mathcal R}= {\mathcal R}\circ {\mathcal L}$ (where $\circ$ denotes the usual product of binary relations) which holds for any semigroup fails for multisemigroups
in general, see Subsection \ref{s5.31}. For an element $a\in S$ we denote by $\mathcal{L}_a$ the $\mathcal{L}$-class
of $S$ containing $a$. We define $\mathcal{R}_a$, $\mathcal{H}_a$, $\mathcal{J}_a$ and $\mathcal{D}_a$ similarly.

\begin{example}\label{ex15}
{\rm 
Let $(S,\cdot)$ be a semigroup and $(S,*_L)$ be the corresponding multisemigroup of left ideals defined in
Subsection~\ref{s3.5}. For $a\in S$ we have $S^1a=S^1*_L a$ and hence the $\mathcal{L}$-classes in $(S,\cdot)$ and 
in $(S,*_L)$ coincide. Any two-sided ideal of $(S,\cdot)$ is, in particular, a left ideal, which implies that
the $\mathcal{J}$-classes in $(S,\cdot)$ and in $(S,*_L)$ coincide as well. On the other hand, for any
$a\in S$ we have $a*_L S^1=S^1(aS^1)=S^1*_L a*_L S^1$, which implies that the relations $\mathcal{R}$
and $\mathcal{J}$ for $(S,*_L)$ coincide with $\mathcal{J}$ in $(S,\cdot)$. 
Hence we also have $\mathcal{D}=\mathcal{J}$ and $\mathcal{L}=\mathcal{H}$
for the multisemigroup $(S,*_L)$.
}
\end{example}

\subsection{Rees quotients}\label{s4.3}

Let $(S,*)$ be a multisemigroup and $I\subset S$ a two-sided ideal different from $S$ (possibly empty). 
Similarly to 
Subsection~\ref{s4.2}, consider the set $T:=S\setminus I$ and for $a,b\in T$ set $a\bullet b:=(a*b)\setminus I$.
It is straightforward to verify that this turns $(T,\bullet)$ into a  multisemigroup. Making a parallel with 
the classical semigroup theory, we will call the multisemigroup  $(T,\bullet)$ the 
{\em Rees quotient} of $S$ modulo the ideal $I$.

\subsection{Zero elements}\label{s4.2}

Let $(S,*)$ be a multisemigroup. An element $z\in S$ is called a {\em zero} element provided that for every $a\in S$
we have $a*z=z*a=z$. A zero element is necessarily unique, if exists, and therefore it is natural to denote this
unique zero element by $0$. 

Let $(S,*)$ be a multisemigroup with the zero $0$ and suppose that $S\neq \{0\}$. 
Then we claim that for any $a,b\in S$ we have $a*b\neq\varnothing$. Indeed, assume that $a*b=\varnothing$, 
then, on the one hand, $(a*b)*0=\varnothing$, but, on the other hand, $a*(b*0)=a*0=0$, a contradiction. 
Consider the set $T:=S\setminus\{0\}$ and for $a,b\in T$ set $a\bullet b:=(a*b)\setminus\{0\}$. 
It is straightforward to verify that $(T,\bullet)$ is a multisemigroup (see also Subsection~\ref{s4.3}).

Conversely, let $(S,*)$ be a multisemigroup without a zero element. Consider the set $\displaystyle 
S^0:=S\cup\{0\}$, where we assume $0\not\in S$, and for $a,b\in S^0$ define
\begin{displaymath}
a\bullet b:=\begin{cases}(a*b)\cup\{0\};& a,b\in S;\\
\{0\},& \text{otherwise}.\end{cases} 
\end{displaymath}

\begin{lemma}\label{lemzore}
\begin{enumerate}[$($a$)$]
\item\label{lemzore.1} The construct $(S^0,\bullet)$ is a multisemigroup with the zero $0$.
\item\label{lemzore.2} The set $\{0\}$ is an ideal of $(S^0,\bullet)$.
\item\label{lemzore.3} We have $(S^0,\bullet)\setminus\{0\}\cong (S,*)$.
\end{enumerate}
\end{lemma}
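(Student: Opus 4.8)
The plan is to verify the three claims of Lemma~\ref{lemzore} in order, with the bulk of the work going into part~\eqref{lemzore.1}, since once $(S^0,\bullet)$ is known to be a multisemigroup the remaining two parts are essentially immediate from the definition of $\bullet$. For part~\eqref{lemzore.1} I would first record that $0$ is a zero element of $(S^0,\bullet)$: by the definition of $\bullet$, whenever one of the arguments equals $0$ (or lies outside $S$) the product is $\{0\}$, so $a\bullet 0=0\bullet a=\{0\}$ for every $a\in S^0$. The real content is associativity. Here I would use the reformulation from Subsection~\ref{s2.1}: it suffices to check $a\bullet(b\bullet c)=(a\bullet b)\bullet c$ for all \emph{elements} $a,b,c\in S^0$, where the operation $\bullet$ is extended to subsets of $S^0$ via~\eqref{eq2}.

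The core calculation splits according to whether any of $a,b,c$ equals $0$. If at least one argument is $0$, then both $a\bullet(b\bullet c)$ and $(a\bullet b)\bullet c$ collapse to $\{0\}$: any occurrence of $0$ as a factor forces a $\{0\}$ at the innermost product, and since $0$ is absorbing the whole expression reduces to $\{0\}$ on both sides. The interesting case is $a,b,c\in S$. Then, by definition, $b\bullet c=(b*c)\cup\{0\}$, and applying $\bullet$ on the left by $a\in S$ and distributing over the union via~\eqref{eq3} gives
\begin{displaymath}
a\bullet\big((b*c)\cup\{0\}\big)=\Big(\bigcup_{t\in b*c}a\bullet t\Big)\cup(a\bullet 0)
=\Big(\bigcup_{t\in b*c}\big((a*t)\cup\{0\}\big)\Big)\cup\{0\}
=\big(a*(b*c)\big)\cup\{0\}.
\end{displaymath}
By the symmetric computation, $(a\bullet b)\bullet c=\big((a*b)*c\big)\cup\{0\}$. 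Since $(S,*)$ is a multisemigroup, $a*(b*c)=(a*b)*c$ by~\eqref{eq1}, so the two sides agree and associativity holds.

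Part~\eqref{lemzore.2} then follows directly: for any $a\in S^0$ we have $a\bullet 0=0\bullet a=\{0\}\subset\{0\}$, so $\{0\}$ is a (two-sided) ideal. For part~\eqref{lemzore.3}, the Rees quotient $(S^0,\bullet)\setminus\{0\}$ of Subsection~\ref{s4.3} has underlying set $S$ and operation $a\bullet b$ with $\{0\}$ removed; but for $a,b\in S$ we have $(a\bullet b)\setminus\{0\}=\big((a*b)\cup\{0\}\big)\setminus\{0\}=a*b$, so the Rees quotient operation coincides with $*$ on the nose and the identity map is the required isomorphism. I do not anticipate a genuine obstacle; the only point requiring mild care is keeping track of the adjoined $0$ through the distributivity step in the associativity check, ensuring that the absorbing factor $\{0\}$ is correctly folded into the union rather than dropped.
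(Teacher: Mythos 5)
Your proof is correct and follows essentially the same route as the paper's: a case split on whether one of $a,b,c$ equals $0$ (both sides collapsing to $\{0\}$), and for $a,b,c\in S$ the identification of both sides of the associativity law with $(a*b*c)\cup\{0\}$, with parts (b) and (c) read off directly from the definition of $\bullet$. Your write-up merely spells out the distributivity step that the paper leaves implicit.
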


\begin{proof}
Let $a,b,c\in S^0$. If one of these elements equals $0$, then both sides
of \eqref{eq1} are equal to $0$. If $a,b,c\in S$, then both sides of 
\eqref{eq1} equal $(a*b*c)\cup \{0\}$. This proves clam \eqref{lemzore.1}.
Claims \eqref{lemzore.2} and \eqref{lemzore.3} follow from the definition of $\bullet$.
\end{proof}

By the above one can consider multisemigroups {\em without} zero elements and understand that the
role of the zero element is played by the ``undefined'' multiplication, that is the case $a*b=\varnothing$.
This, in particular, unifies the notions of ``simple'' and ``$0$-simple'' multisemigroups
and semigroups.

A multisemigroup $(S,*)$ will be called a {\em quasi-semigroup} provided that for any $a,b\in S$ the product
$a*b$ is either empty or an element of $S$. Quasi-semigroups can be 
identified with semigroups with zero elements: Given a semigroup with a zero element we can take this zero 
element away and redefine the product to be empty whenever it was zero to obtain a quasi-semigroup. Conversely, 
let $(S,*)$ be a quasi-semigroup without a zero element. Consider the set $\displaystyle 
S^0:=S\cup\{0\}$, where we assume $0\not\in S$, and for $a,b\in S^0$ define
\begin{displaymath}
a\odot b:=\begin{cases}a*b;& a,b\in S,\, a*b\neq \varnothing;\\
\{0\},& \text{otherwise}.\end{cases} 
\end{displaymath}
Then $(S^0,\odot)$ becomes a semigroup with a zero element. Note that the previous construction $(S^0,\bullet)$
produces in this case a multisemigroup, not a semigroup.

\subsection{Homomorphisms}\label{s4.4}

Let $(S,*)$ and $(T,\bullet)$ be multisemigroups. A {\em strong homomorphism} from $S$ to $T$ is a map
$\varphi:S\to T$ such that for any $a,b\in S$ we have 
\begin{displaymath}%\label{eq6}
\bigcup_{s\in a*b}\{\varphi(s)\}=\varphi(a)\bullet\varphi(b).
\end{displaymath}
If $\varphi:S\to T$ is a strong homomorphism, it extends uniquely to a quantale homomorphism 
$\overline{\varphi}:2^S\to 2^T$ by setting $\overline{\varphi}(A):=\cup_{a\in A}\{\varphi(a)\}$ for $A\subset S$.
By definition, $\overline{\varphi}$ maps atoms of $2^S$ to atoms of $2^T$. A homomorphism of atomic quantales
which maps atoms to atoms is called {\em atomic}. Conversely, any atomic homomorphism from $2^S$ to $2^T$ gives, via
restriction to atoms, a strong homomorphism from $S$ to $T$. 

Let $\mathbf{MSemi}$ denote the category of multisemigroups with strong homomorphisms. By the above, this category is
equivalent to the category $\mathbf{QCABA}'$ of quantale structures on complete atomic Boolean algebras with atomic homomorphisms. 

As multisemigroups are defined in a multi-setting anyway, it is natural to extend the above as follows:
Let $(S,*)$ and $(T,\bullet)$ be multisemigroups. A {\em weak homomorphism} from $S$ to $T$ is a map
$\varphi:S\to 2^T$ such that for any $a,b\in S$ we have 
\begin{displaymath}%\label{eq7}
\bigcup_{s\in a*b}\varphi(s)=\varphi(a)\bullet\varphi(b).
\end{displaymath}
If $\varphi:S\to 2^T$ is a weak homomorphism, it extends uniquely to a quantale homomorphism 
$\overline{\varphi}:2^S\to 2^T$ by setting $\overline{\varphi}(A):=\cup_{a\in A}\varphi(a)$ for $A\subset S$.
Conversely, any quantale homomorphism from $2^S$ to $2^T$ gives, via restriction to atoms of $S$, a weak 
homomorphism from $S$ to $T$. 

Let $\overline{\mathbf{MSemi}}$ denote the category multisemigroups with weak homomorphisms. By the above, this 
category is equivalent to the category $\mathbf{QCABA}$ of quantale structures on complete atomic Boolean 
algebras with usual homomorphisms. 

\subsection{Submultisemigroups}\label{s4.5}

A non-empty subset $T$ of a multisemigroup $(S,*)$ is called a {\em submultisemigroup} provided that
$a*b\subset T$ for all $a,b\in T$. Any submultisemigroup $T$ is a multisemigroup with respect to the restriction
of $*$ to $T$.

Let $(S,*)$ be a multisemigroup and $X\subset S$ a non-empty subset. Then the  {\em submultisemigroup} 
$\langle X\rangle$ of $S$ {\em generated by $X$} is the minimal (with respect to inclusion) submultisemigroup of
$S$ containing $X$. Alternatively, $\langle X\rangle$ is the intersection of all submultisemigroups of 
$S$ containing $X$.

Let $(S,*)$ be a multisemigroup and $J$ a $\mathcal{J}$-class of $S$. Set $T=\langle J\rangle$. 
As $J$ is a $\mathcal{J}$-class, $T\setminus J$ is an ideal of $T$ and hence we can
consider the corresponding Rees quotient $T\setminus(T\setminus J)$. Note that $T\setminus(T\setminus J)$
might not be a semigroup even if $S$ is a semigroup. This construction is a natural multisemigroup analogue of 
the standard construction of the ``semigroup associated with a $\mathcal{J}$-class''.

\subsection{Congruences and quotients}\label{s4.6}

Let $(S,*)$ be a multisemigroup. An equivalence relation $\sim$ on $S$ is called a {\em left congruence}
provided that the following condition is satisfied: for any $a,b,c\in S$ such that $a\sim b$ and for any
$s\in c*a$ and $t\in c*b$ there are $s'\in c*b$ and $t'\in c*a$ such that $s\sim s'$ and $t\sim t'$.
A {\em right congruence} is defined similarly (using $a*c$ and $b*c$ at appropriate places) and a
{\em congruence} is an equivalence relation which is both a left and a right congruence (confer \cite{Da}).

Let $\sim$ be a congruence on $(S,*)$. Then the usual argument shows that the quotient set $S/\hspace{-1.5mm}\sim$ has the 
natural structure of a multisemigroup with respect to the induced multioperation $\circ$ defined for 
$A,B\in S/\hspace{-1.5mm}\sim$ as follows:
\begin{displaymath}
A\circ B=\{C\in S/\hspace{-1.5mm}\sim\,\vert\, \text{ there exist } a\in A,\,b\in B,\,c\in C\text{ such that }c\in a*b\}. 
\end{displaymath} 
Mapping $s\in S$ to its $\sim$-class $\overline{s}$ is obviously a strong homomorphism from the
multisemigroup $(S,*)$ to the multisemigroup $(S/\hspace{-1.5mm}\sim,\circ)$. We have
$A\circ B=\{\overline{s}\,\vert\, s\in A*B\}$ for $A,B\in S/\hspace{-1.5mm}\sim$, furthermore, 
for any $a,b\in S$ we have $\overline{a}\circ \overline{b}=\{\overline{s}\,\vert\,s\in a*b\}$.

Let $(T,\cdot)$ be another multisemigroup and $\varphi:S\to T$ a strong homomorphism. Then the equivalence
relation $\mathrm{Ker}(\varphi)$ on $S$ is easily seen to be a congruence. Hence, as usual, congruences on
multisemigroups are exactly kernels of strong homomorphisms.

Let $\sim$ be a congruence on a multisemigroup $(S,*)$. Then the map $j$ which sends $a\in S$ to its $\sim$-class 
extends to the map $j:2^S\to 2^S$, by $j(A)=\cup_{a\in A} j(a)$. It is straightforward to verify that this map 
is monotone, extensive, idempotent and satisfies $j(A)*j(B)\leq j(A*B)$. It follows that $j$ is a quantic 
nucleus, see \cite[Chapter 3]{Ro}. The quotient quantale defined by $j$ is a complete atomic Boolean algebra with 
the atoms $j(a)$, $a\in S$. Setting $T=S/\hspace{-1mm}\sim$, it is easy to see that this quotient quantale is just 
the quantale $2^T$ associated to the multisemigroup $T$. Conversely, assume that we are given a complete atomic 
Boolean algebra $2^X$ with a quantale structure on it and assume that $j:2^X\to 2^X$ is a nucleus such that 
the quotient quantale is a complete atomic Boolean algebra $2^Y$. On the multisemigroup $X$ we define a 
relation $\sim$ as follows: $x\sim y$ if and only if $j(\{x\})=j(\{y\})$. 
This relation is a congruence and the quotient multisemigroup is $Y$, 
the multisemigroup of atoms of $2^Y$. It follows that congruences on multisemigroups are in a bijective 
correspondence with congruences on quantale structures on complete atomic Boolean algebras.

\subsection{Representations of multisemigroups by binary relations}\label{s4.7}

Regular representations of multisemigroups by binary relations were constructed in \cite{Ea} in the
following way: Let $(S,*)$ be a multisemigroup. Then $*$ can be viewed as a binary relation  $S\times S\to S$, 
in particular, every $a\in S$ defines a binary relation $\tau_{a}$ on $S$ via $y\,\tau_a\, x$ if and only 
if $y\in a*x$ (this corresponds to the usual convention that maps operate from the right to the left).
We identify binary relations with square matrices over $\mathbb{B}$, whose rows and columns
are indexed by elements of $S$ (i.e. $a\,\tau\,b$ if and only if the intersection of row $a$ and  
column $b$ of the matrix of $\tau$ contains $1$). Then for the usual sum $+$ and product $\circ$ of binary 
relations (i.e. sum and product of Boolean matrices) we have 
\begin{displaymath}
\tau_a\circ \tau_b=\sum_{s\in a*b}\tau_s.
\end{displaymath}
This is the left regular representation of $(S,*)$. The right regular representation is defined similarly.

This representation extends to a homomorphism from the quantale $2^{S}$ to the
quantale $\mathbf{B}(S)$ of binary relations on $S$. Therefore it is natural to generalize this and call a 
{\em representation} of $(S,*)$ (by binary relations) any quantale homomorphism from $2^{S}$ 
to any quantale of binary relations.

Similarly, given an $\mathcal{L}$-class $L$ in $S$, for $a \in S$ we define the binary relation $\lambda_a$ on $L$ via
$y\,\lambda_a\, x$ if and only if $y\in a*x$. This extends, using addition of binary relations, 
to a homomorphism from the quantale 
$2^{S}$ to the quantale $\mathbf{B}(L)$ of binary relations on $L$. This is the
representation of $S$ associated to $L$ (confer \cite[Section~10]{GM2}).

\section{Strongly simple multisemigroups}\label{s5}

\subsection{Simple and strongly simple multisemigroups}\label{s5.1}

A multisemigroup $(S,*)$ is called {\em simple} if for any $a\in S$ we have $S^1*a*S^1=S$, that is $S$ has a unique
$\mathcal{J}$-class (confer \cite{JSM}). From now on, unless stated otherwise, we assume that $S$ does not contain 
a zero element (see Subsection~\ref{s4.2}). This excludes one special case: when $(S,*)$ is a singleton group. 
In the latter case $(S,*)$ contains both the identity and the zero elements and they coincide. This case is 
usually excluded in  the classical ring theory as well.

Recall the example of a two-element multisemigroup, $(S,*)$, from Subsection~\ref{s3.2}. This multisemigroup is finite simple 
with a unique $\mathcal{R}$-class but two different $\mathcal{L}$-classes which are, moreover, comparable with respect 
to $\leq_L$. This shows a significant difference between multisemigroups and semigroups as no analogous situation
is possible for semigroups.

From now on, if the converse is not explicitly stated, by a {\em minimal} left ideal of $S$ we mean a minimal non-empty left ideal, that is a non-empty left ideal $I$ of $S$ such that for any left ideal $J$ of $S$ the inclusion $J\subseteq I$ implies that $J=I$ or $J=\varnothing$. In a similar way we will also use the notions of a minimal right ideal and a minimal (two-sided) ideal.

If $(S,*)$ is a multisemigroup, an element $s\in S$ will be called a {\em quark} provided that $S^1*s$ is a minimal
left ideal and $s*S^1$ is a minimal right ideal. For any quark $s$ we thus have $\mathcal{L}_s=S^1*s$
and $\mathcal{R}_s=s*S^1$. We denote by $Q(S)$ the set of all quarks in $(S,*)$.
The set $Q(S)$ will be called the {\em support} of $S$. A simple multisemigroup $(S,*)$ will be called 
{\em strongly simple} if $S=Q(S)$. For instance, any completely simple semigroup is a strongly simple multisemigroup.
Furthermore, the quasi-semigroup associated to a completely $0$-simple semigroup as described at the end of
 Subsection~\ref{s4.2} is a strongly simple multisemigroup.
A special case of the last example is the singleton multisemigroup $\mathbf{0}:=(\{0\},*)$,
where $0*0=\varnothing$. 
Moreover, the multisemigroup $\mathbf{0}$ is very special because of the following:

\begin{proposition}\label{prop29}
Let $(S,*)$ be a multisemigroup. If $(S,*)$ contains only one $\mathcal{H}$-class, then either
$S\cong \mathbf{0}$ or $S$ is a hypergroup.
\end{proposition}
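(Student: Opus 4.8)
The plan is to translate the hypothesis into a statement about principal ideals and then to show that the only way the reproduction axiom $S*a=a*S=S$ can fail is the degenerate situation $S\cong\mathbf{0}$. First I would unwind what a single $\mathcal{H}$-class means. Since $\mathcal{H}=\mathcal{L}\cap\mathcal{R}$, all of $S$ being one $\mathcal{H}$-class forces $a\,\mathcal{L}\,b$ and $a\,\mathcal{R}\,b$ for all $a,b\in S$; in other words all principal left ideals $S^1*a$ coincide, and all principal right ideals $a*S^1$ coincide. Because $1*a=\{a\}$ in $S^1$, we have $a\in S^1*a$ for every $a$, so the common value of the $S^1*a$ contains every element of $S$ and hence equals $S$. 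Thus $S^1*a=S$ and, symmetrically, $a*S^1=S$ for all $a\in S$. Writing $S^1*a=(S*a)\cup\{a\}$, the equality $S^1*a=S$ says exactly that $(S*a)\cup\{a\}=S$, so for each $a$ we have either $S*a=S$ or $S*a=S\setminus\{a\}$ (the latter precisely when $a\notin S*a$); the same dichotomy holds for $a*S$. The reproduction axiom therefore holds unless one of these products is a proper subset, so the whole point is to rule that out except in the degenerate case.

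The key step is to show that $S*a=S\setminus\{a\}$ is impossible once $S\setminus\{a\}\neq\varnothing$. Here I would first check, using associativity \eqref{eq1}, that $S*a$ is a left ideal: if $x\in S*a$, say $x\in t*a$, then $s*x\subseteq s*(t*a)=(s*t)*a\subseteq S*a$ for every $s\in S$. Now suppose $S*a=S\setminus\{a\}$ and pick any $x\in S*a$ (possible exactly when this set is non-empty); then $x\neq a$ and, since $S*a$ is a left ideal containing $x$, we get $S*x\subseteq S*a$. Combining this with $S^1*x=(S*x)\cup\{x\}=S$ yields
\begin{displaymath}
S=(S*x)\cup\{x\}\subseteq (S*a)\cup\{x\}=(S\setminus\{a\})\cup\{x\}=S\setminus\{a\},
\end{displaymath}
a contradiction. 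Hence for each $a$ either $S*a=S$, or $S*a=S\setminus\{a\}=\varnothing$; and the latter alternative means $S=\{a\}$ with $a*a=\varnothing$, that is, $S\cong\mathbf{0}$.

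Finally I would assemble the dichotomy. If $S\not\cong\mathbf{0}$, then the degenerate alternative never occurs, so $S*a=S$ for all $a$; running the identical argument with right ideals (using $a*S^1=S$ and $\mathcal{R}$ in place of $\mathcal{L}$) gives $a*S=S$ for all $a$, and therefore $S$ is a hypergroup. I expect the contradiction displayed above to be the only real content of the proof; everything else is bookkeeping with the identity-adjunction formula $S^1*a=(S*a)\cup\{a\}$ and the symmetry between the left- and right-handed arguments.
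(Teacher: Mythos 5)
Your proof is correct and takes essentially the same route as the paper's: both reduce the one-$\mathcal{H}$-class hypothesis to $S^1*a=a*S^1=S$ for all $a$, and both kill the bad case $S*a=S\setminus\{a\}$ (resp.\ $a*S=S\setminus\{a\}$) by noting that this set would be a proper one-sided ideal, which is impossible since the principal ideal of any of its elements is all of $S$. The only difference is bookkeeping: the paper splits off $|S|=1$ at the outset (invoking the section's standing no-zero-element convention to get $S\cong\mathbf{0}$), whereas you absorb the degenerate case into the dichotomy by letting $S*a=S\setminus\{a\}=\varnothing$ force $S=\{a\}$ with $a*a=\varnothing$.
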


\begin{proof}
Assume that $S=\mathcal{H}_a$ for every $a\in S$. If $|S|=1$ then $S\cong \mathbf{0}$
(note that $S$ cannot be isomorphic to the trivial semigroup as the trivial semigroup contains a zero element) 
and hence is a hypergroup. Suppose that $|S|>1$. Let $a\in S$. 
We have $a*S^1=b*S^1\ni b$, for any $b\in S$,  which implies that $a*S\supset 
S\setminus\{a\}$. Assume that $a*S=S\setminus\{a\}$. Then
$S\setminus\{a\}$ is a proper right ideal of $S$ and hence for any $b\in S\setminus\{a\}$ we have
$b*S^1\subset S\setminus\{a\}$, a contradiction. Hence
$a*S=S$. Similarly one shows that $S*a=S$ and the claim follows. 
\end{proof}

For $Q(S)$ we have the following elementary property:

\begin{proposition}\label{prop7}
Let $(S,*)$ be a multisemigroup with non-empty support. Then we have:
\begin{enumerate}[$($a$)$]
\item\label{prop7.1} $Q(S)$ and every non-empty intersection of $Q(S)$ with a $\mathcal{J}$-class of $S$
is a submultisemigroup.
\item\label{prop7.2} The multisemigroup $Q(S)$ is a disconnected union (as defined in Subsection \ref{s3.13})
of its non-empty intersections with $\mathcal{J}$-classes of $S$.
\end{enumerate}
\end{proposition}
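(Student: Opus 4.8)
The plan is to reduce both parts to a single structural fact about products of quarks, which I will call the \emph{key fact}: if $s$ and $t$ are quarks and $u\in s*t$, then $u$ is again a quark, and moreover $u\,\mathcal{R}\,s$ and $u\,\mathcal{L}\,t$, so that $s$, $t$ and $u$ all lie in the same $\mathcal{J}$-class. Once this is available, everything else is bookkeeping.

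To prove the key fact I would first record that since $s,t\in S\subset S^1$, any $u\in s*t$ satisfies $u\in s*t\subset S^1*t=\mathcal{L}_t$ and $u\in s*t\subset s*S^1=\mathcal{R}_s$, where the two equalities use that $s$ and $t$ are quarks. Now $S^1*u$ is a left ideal containing $u$ (because $1*u=u$), hence non-empty, and it is contained in the \emph{minimal} left ideal $S^1*t$; by minimality $S^1*u=S^1*t$, so $S^1*u$ is itself minimal and $u\,\mathcal{L}\,t$. Symmetrically $u*S^1=s*S^1$ is a minimal right ideal and $u\,\mathcal{R}\,s$. Thus $u$ is a quark. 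Since $\mathcal{L}\subset\mathcal{J}$ and $\mathcal{R}\subset\mathcal{J}$, we conclude $s\,\mathcal{J}\,u\,\mathcal{J}\,t$, so $s\,\mathcal{J}\,t$ and $u$ sits in their common $\mathcal{J}$-class.

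For part \eqref{prop7.1}, if $s,t\in Q(S)$ then $s*t\subset Q(S)$ by the key fact, so $Q(S)$ is a submultisemigroup (it is non-empty by hypothesis). If moreover $s,t\in Q(S)\cap J$ for a $\mathcal{J}$-class $J$, then every $u\in s*t$ satisfies $u\,\mathcal{J}\,s$, whence $s*t\subset Q(S)\cap J$; this shows each non-empty $Q(S)\cap J$ is a submultisemigroup. For part \eqref{prop7.2}, the $\mathcal{J}$-classes of $S$ partition $Q(S)$ into the non-empty sets $Q(S)\cap J$. The contrapositive of the key fact says that if two quarks lie in distinct $\mathcal{J}$-classes then their product is $\varnothing$, while by part \eqref{prop7.1} a product of quarks from the same class stays in that class. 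This is exactly the defining data of the disconnected union (Subsection~\ref{s3.13}) of the pieces $Q(S)\cap J$, so $Q(S)$ is their disconnected union.

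The main obstacle is isolating and proving the key fact; its delicate point is the double invocation of minimality (of $S^1*t$ on the left and of $s*S^1$ on the right), combined with the observation that $S^1*u$ and $u*S^1$ are automatically non-empty precisely because $S^1$ carries an identity. I expect everything downstream of the key fact to be immediate.
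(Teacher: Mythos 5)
Your proof is correct and follows essentially the same route as the paper: both rest on the observation that any $u\in s*t$ lies in $S^1*t\cap s*S^1$, so minimality of these principal ideals forces $S^1*u=S^1*t$ and $u*S^1=s*S^1$, making $u$ a quark that is $\mathcal{L}$-related to $t$ and $\mathcal{R}$-related to $s$, whence $s\,\mathcal{J}\,t$ whenever $s*t\neq\varnothing$. The only difference is organizational: you isolate this as a single ``key fact'' and derive both claims from it, while the paper runs the same minimality argument inline, once for each claim.
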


\begin{proof}
Let $a,b\in Q(S)$. Every element $s\in a*b$ belongs to $S^1*b\cap a*S^1$ and 
hence to $Q(S)$. This shows that $Q(S)$ is a submultisemigroup and claim \eqref{prop7.1} follows.
 
Let $a,b\in Q(S)$ be such that $a$ and $b$ belong to different $\mathcal{J}$-classes of $S$.
If $s\in a*b$, then $s\in S^1*b$ and hence $s\,\mathcal{L}\,b$ by the minimality of $S^1*b$. At the same 
time $s\in a*S^1$ and hence $s\,\mathcal{R}\,a$ by the minimality of $a*S^1$. This implies $s\,\mathcal{J}\,b$ and
$s\,\mathcal{J}\,a$ which means that $a\,\mathcal{J}\,b$ contradicting our assumptions. This implies that 
$a*b=\varnothing$ and hence $Q(S)$ is the disconnected union of  its intersections with  $\mathcal{J}$-classes of $S$.
\end{proof}

%\subsection{Simple multisemigroups}\label{s5.105}

\begin{lemma}\label{lem22}
Let $(S,*)$ be a simple multisemigroup, $I$ a non-empty left ideal of $S$ and $J$ a non-empty right ideal of $S$.
Then $I\cap J\neq \varnothing$ and $J*I\neq \varnothing$.
\end{lemma}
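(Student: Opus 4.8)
The plan is to derive both assertions from the single statement that $J*I\neq\varnothing$, because one checks at once that $J*I\subseteq I\cap J$: for $b\in J$ and $a\in I$ the product $b*a$ lies in $J$ (since $J$ is a right ideal and $b\in J$) and simultaneously in $I$ (since $I$ is a left ideal and $a\in I$), so $b*a\subseteq I\cap J$. Thus $J*I\neq\varnothing$ immediately forces $I\cap J\neq\varnothing$, and the whole lemma reduces to showing the product is non-empty.

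First I would use simplicity to ``fatten'' each one-sided ideal into all of $S$. Fix $a\in I$; since $I$ is a left ideal we have $S^1*a\subseteq I$, and simplicity gives $S=S^1*a*S^1=(S^1*a)*S^1\subseteq I*S^1\subseteq S$, whence $S=I*S^1$. Symmetrically, fixing $b\in J$ and using $b*S^1\subseteq J$ together with $S=S^1*b*S^1$ yields $S=S^1*J$. All of these are equalities in $2^S$, obtained from the set operation \eqref{eq2} and the associativity \eqref{eq1}.

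The key step is then to combine the two descriptions of $S$ via associativity:
\begin{displaymath}
S*S=(S^1*J)*(I*S^1)=S^1*(J*I)*S^1.
\end{displaymath}
Consequently, if $J*I=\varnothing$ then $S*S=\varnothing$, that is $s*t=\varnothing$ for all $s,t\in S$. In that situation, for every $c\in S$ we get $S^1*c=(S*c)\cup\{c\}=\{c\}$ and likewise $S^1*c*S^1=\{c\}$, so simplicity forces $S=\{c\}$ with $c*c=\varnothing$; in other words $S\cong\mathbf{0}$. Hence, as long as $S\not\cong\mathbf{0}$, we must have $J*I\neq\varnothing$, and the inclusion $J*I\subseteq I\cap J$ of the first paragraph then also gives $I\cap J\neq\varnothing$.

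The one genuine obstacle is the degenerate multisemigroup $\mathbf{0}$, for which $J*I$ really is empty; there $I=J=S=\{0\}$, so the intersection claim $I\cap J=\{0\}\neq\varnothing$ still holds directly and must be recorded separately. Beyond this special case, I expect the main care in the write-up to be purely a matter of bookkeeping of sides — tracking which one-sided ideal each partial product falls into — rather than any deep difficulty; once $S=I*S^1$ and $S=S^1*J$ are established, the single associativity identity above does all the work.
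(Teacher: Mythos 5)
Your proof is correct and follows essentially the same route as the paper's own argument: reduce both claims to $J*I\neq\varnothing$ via the inclusion $J*I\subseteq I\cap J$, use simplicity to obtain $S=S^1*J$ and $S=I*S^1$, and then derive the contradiction from the associativity computation $S*S=S^1*(J*I)*S^1=\varnothing$. The only point of divergence is the degenerate case, and there your treatment is in fact more careful than the paper's: the paper dismisses $|S|=1$ as ``obvious'', but under the standing convention of Subsection~\ref{s5.1} the only simple singleton left in play is $\mathbf{0}$ (the singleton group being excluded as it has a zero element), and for $S\cong\mathbf{0}$ one has $I=J=\{0\}$ with $J*I=0*0=\varnothing$, so the product half of the lemma genuinely fails while the intersection half survives --- exactly the caveat you record. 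In other words, your write-up proves the statement away from $\mathbf{0}$ by the paper's own argument (your explicit verification that $S*S=\varnothing$ forces $S\cong\mathbf{0}$ is just the unpacking of the paper's final sentence), and in addition it pinpoints a minor inaccuracy in the lemma as stated, which the paper itself implicitly corrects later by excluding $\mathbf{0}$ in Theorem~\ref{thm31}\eqref{thm31.5}.
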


\begin{proof}
The claim is obvious in the case $|S|=1$, so we assume that $|S|>1$. Since $I\cap J\supset J*I$, it is enough to show
that $J*I\neq \varnothing$. Assume that this is not the case, that is $J*I=\varnothing$. 
Since $S^1*J$ is a non-empty ideal of $S$, we 
have  $\displaystyle S=S^1*J$ since $S$ is simple. Similarly 
$S=I*S^1$. But then
\begin{displaymath}
S*S=S^1*J*I*S^1=\varnothing.
\end{displaymath}
This is, however, not possible if $S$ is simple and $|S|>1$. This completes the proof.
\end{proof}

We say that a multisemigroup $(S,*)$ is {\em of finite type} provided that every (non-empty) left ideal of $S$ contains a 
minimal left ideal and every (non-empty) right ideal of $S$ contains a minimal right ideal. Clearly, every finite
multisemigroup is of finite type. 

\begin{corollary} Let $(S,*)$ be a simple multisemigroup of finite type. Then $Q(S)\neq \varnothing$.
\end{corollary}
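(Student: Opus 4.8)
The plan is to prove that a simple multisemigroup of finite type has a nonempty support by exhibiting a quark, i.e., an element whose principal left ideal is a minimal left ideal and whose principal right ideal is a minimal right ideal. First I would pick any element $a\in S$. Since $S$ is of finite type, the principal left ideal $S^1*a$ contains some minimal left ideal, call it $L$, and the principal right ideal $a*S^1$ contains some minimal right ideal, call it $R$. The natural candidate for a quark should be produced by combining a minimal left ideal with a minimal right ideal, so the key move is to intersect (or multiply) $R$ with $L$.

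The central tool is Lemma~\ref{lem22}: since $S$ is simple, $L$ is a nonempty left ideal and $R$ a nonempty right ideal, so the lemma gives $R*L\neq\varnothing$. Pick any $s\in R*L$. Then $s\in R*L\subset R=$ (a minimal right ideal), because $R$ is a right ideal and $R*L\subset R*S^1\subset R$; symmetrically $s\in R*L\subset S^1*L\subset L$ since $L$ is a left ideal. Thus $s$ lies in both a minimal left ideal $L$ and a minimal right ideal $R$. The remaining step is to check that $s$ is itself a quark, that is, that $S^1*s$ is minimal and $s*S^1$ is minimal. Because $s\in L$ we have $S^1*s\subset L$, and since $L$ is a minimal (nonempty) left ideal and $S^1*s$ is a nonempty left ideal contained in it, minimality forces $S^1*s=L$, which is minimal; the dual argument gives $s*S^1=R$, minimal. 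Hence $s$ is a quark and $Q(S)\neq\varnothing$.

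The only subtle point, and the step I expect to require the most care, is verifying that $S^1*s$ is genuinely nonempty and that the inclusions $S^1*s\subset L$ and $s*S^1\subset R$ hold so that minimality can be applied. Nonemptiness is immediate because $s\in S^1*s$ (using the adjoined identity $1\in S^1$), so no degenerate situation arises. The inclusions are exactly the statement that $L$ and $R$ are, respectively, a left and a right ideal containing $s$: the principal left ideal generated by $s$ is by definition the smallest left ideal containing $s$, hence contained in $L$, and dually for $R$. Once these containments are in place, minimality of $L$ and $R$ does all the work, so the argument reduces to an application of Lemma~\ref{lem22} followed by two invocations of minimality.
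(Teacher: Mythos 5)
Your proof is correct and follows exactly the argument the paper intends: the corollary is stated without proof immediately after Lemma~\ref{lem22} and the definition of finite type, and the implicit argument is precisely yours --- use finite type to produce a minimal left ideal $L$ and a minimal right ideal $R$, apply Lemma~\ref{lem22} to get an element $s$ in their intersection (via $R*L\neq\varnothing$), and then invoke minimality twice to conclude $S^1*s=L$ and $s*S^1=R$, so $s$ is a quark.
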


\begin{lemma}\label{lem23}
Let $(S,*)$ be a multisemigroup and $a,b\in S$. Then the following statements are equivalent:
\begin{enumerate}[$($a$)$]
\item\label{lem23.1} $a*b=\varnothing$.
\item\label{lem23.2} $(S^1*a)*(b*S^1)=\varnothing$.
\item\label{lem23.3} $\mathcal{L}_a*\mathcal{R}_b=\varnothing$. 
\item\label{lem23.4} There is $s\in \mathcal{L}_a$ and
$t\in \mathcal{R}_b$ such that $s*t=\varnothing$. 
\end{enumerate}
\end{lemma}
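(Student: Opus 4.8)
The plan is to establish the cyclic chain of implications (a)$\Rightarrow$(b)$\Rightarrow$(c)$\Rightarrow$(d)$\Rightarrow$(a). The whole argument will rest on two elementary facts about the induced operation on $2^S$ from \eqref{eq2}, together with its associativity. First, $\varnothing$ is \emph{absorbing}: $\varnothing*X=X*\varnothing=\varnothing$ for every $X\subseteq S$, since by \eqref{eq2} a union indexed by the empty set is empty. Second, $*$ is \emph{monotone}: $A\subseteq A'$ and $B\subseteq B'$ force $A*B\subseteq A'*B'$, which is immediate from the distributivity \eqref{eq3}. The key idea is that one never needs to chase individual elements: associativity on $2^S$ lets me collapse each product of principal ideals to an expression in which $a*b$ (or $s*t$) occurs as a factor, and then the absorbing property annihilates the whole product as soon as that factor is empty.

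For (a)$\Rightarrow$(b), I would simply rewrite, using associativity on $2^S$, the product $(S^1*a)*(b*S^1)=S^1*(a*b)*S^1$; since $a*b=\varnothing$, the right-hand side is empty by the absorbing property. For (b)$\Rightarrow$(c), I recall that $s\,\mathcal{L}\,a$ means $S^1*s=S^1*a$, so taking the identity factor $1\in S^1$ gives $s\in S^1*s=S^1*a$ and hence $\mathcal{L}_a\subseteq S^1*a$; dually $\mathcal{R}_b\subseteq b*S^1$. Monotonicity then yields $\mathcal{L}_a*\mathcal{R}_b\subseteq (S^1*a)*(b*S^1)=\varnothing$.

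The implication (c)$\Rightarrow$(d) is immediate, since $a\in\mathcal{L}_a$ and $b\in\mathcal{R}_b$, so the pair $s=a$, $t=b$ already witnesses (d). Finally, for (d)$\Rightarrow$(a) I would take witnesses $s\in\mathcal{L}_a$ and $t\in\mathcal{R}_b$ with $s*t=\varnothing$; from $s\,\mathcal{L}\,a$ and $t\,\mathcal{R}\,b$ one gets $a\in S^1*a=S^1*s$ and $b\in b*S^1=t*S^1$, whence by monotonicity and associativity $a*b\subseteq (S^1*s)*(t*S^1)=S^1*(s*t)*S^1=\varnothing$. I do not expect any genuine obstacle in this lemma; the only points requiring minor care are the bookkeeping around $S^1$ and the use of the identity factor $1$ to conclude $x\in S^1*x$ and $x\in x*S^1$, and the repeated appeal to the absorbing property so that emptiness of a single factor forces emptiness of the entire subset product.
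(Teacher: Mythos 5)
Your proposal is correct and follows essentially the same route as the paper: the same cyclic chain (a)$\Rightarrow$(b)$\Rightarrow$(c)$\Rightarrow$(d)$\Rightarrow$(a), with the identical associativity computation $(S^1*a)*(b*S^1)=S^1*(a*b)*S^1$ driving both (a)$\Rightarrow$(b) and (d)$\Rightarrow$(a), and the inclusions $\mathcal{L}_a\subseteq S^1*a$, $\mathcal{R}_b\subseteq b*S^1$ handling the remaining steps. The only difference is presentational: you spell out the absorbing and monotonicity properties of the extended operation on $2^S$ that the paper treats as obvious.
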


\begin{proof}
That \eqref{lem23.2} implies \eqref{lem23.3} is obvious.
That \eqref{lem23.3} implies \eqref{lem23.4} is obvious.
That \eqref{lem23.1} implies \eqref{lem23.2} follows from 
the following  computation, which uses associativity: $(S^1*a)*(b*S^1)= S^1*(a*b)*S^1=\varnothing$.
The same argument shows that \eqref{lem23.4} implies $(S^1*s)*(t*S^1)=\varnothing$
and, as $a\in S^1*s$ and $b\in t*S^1$, we get that \eqref{lem23.4} implies \eqref{lem23.1}.
\end{proof}

\begin{corollary}\label{cor24}
Let $(S,*)$ be a multisemigroup and $H$ an $\mathcal{H}$-class in $S$. Then the following conditions are 
equivalent:
\begin{enumerate}[$($i$)$]
\item\label{cor24.1} $H*H\neq \varnothing$.
\item\label{cor24.2} There exist $s,t\in H$ such that $s*t\neq\varnothing$.
\item\label{cor24.3} For all $s,t\in H$ we have $s*t\neq\varnothing$.
\end{enumerate}
\end{corollary}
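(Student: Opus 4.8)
The plan is to derive everything from Lemma~\ref{lem23}, the key being its equivalence of \eqref{lem23.1} and \eqref{lem23.3}, which already carries the essential content. The first observation I would record is that all elements of a single $\mathcal{H}$-class $H$ share the same $\mathcal{L}$-class and the same $\mathcal{R}$-class: since $\mathcal{H}=\mathcal{L}\cap\mathcal{R}$, any $s,t\in H$ satisfy $\mathcal{L}_s=\mathcal{L}_t=:L$ and $\mathcal{R}_s=\mathcal{R}_t=:R$, and in fact $H=L\cap R$. Thus $L$ and $R$ are attached to the whole class $H$ rather than to individual elements.

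Next I would apply Lemma~\ref{lem23} to a pair $s,t\in H$, taking $a=s$ and $b=t$. Its equivalence of \eqref{lem23.1} and \eqref{lem23.3} gives $s*t=\varnothing$ if and only if $\mathcal{L}_s*\mathcal{R}_t=\varnothing$, and by the previous paragraph the right-hand side is simply $L*R=\varnothing$, a condition that does not involve the particular choice of $s$ and $t$. The key conclusion I would draw is a dichotomy for the whole class: either $L*R=\varnothing$, in which case $s*t=\varnothing$ for every pair $s,t\in H$, or $L*R\neq\varnothing$, in which case $s*t\neq\varnothing$ for every such pair.

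With this dichotomy in hand, the three conditions follow at once. The implication \eqref{cor24.3}\,$\Rightarrow$\,\eqref{cor24.2} is immediate because $H$, being an equivalence class, is non-empty, and \eqref{cor24.2}\,$\Rightarrow$\,\eqref{cor24.1} holds because $s*t\subseteq H*H$. For \eqref{cor24.1}\,$\Rightarrow$\,\eqref{cor24.3} I would expand $H*H=\bigcup_{s,t\in H}s*t$: if this set is non-empty then some summand $s*t$ is non-empty, which forces $L*R\neq\varnothing$, and the dichotomy then yields $s'*t'\neq\varnothing$ for all $s',t'\in H$, that is \eqref{cor24.3}.

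I do not expect a genuine obstacle. Once one notices that, for elements of $H$, the test $\mathcal{L}_s*\mathcal{R}_t=\varnothing$ from Lemma~\ref{lem23} collapses to the single condition $L*R=\varnothing$, the corollary is immediate; the only point needing a line of care is making explicit that this collapse is exactly what upgrades the existential statement \eqref{cor24.2} to the universal statement \eqref{cor24.3}.
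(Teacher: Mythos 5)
Your proof is correct and follows essentially the same route as the paper: both reduce the corollary to Lemma~\ref{lem23} via the observation that every element of $H$ has the same $\mathcal{L}$-class $L$ and $\mathcal{R}$-class $R$ (so $H=L\cap R$), which makes the emptiness of $s*t$ independent of the choice of $s,t\in H$. The only difference is cosmetic — you close the cycle as (iii)$\Rightarrow$(ii)$\Rightarrow$(i)$\Rightarrow$(iii) and spell out the dichotomy that the paper leaves implicit in "follows from Lemma~\ref{lem23}".
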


\begin{proof} 
Obviously, condition \eqref{cor24.3} implies condition \eqref{cor24.1} and  condition \eqref{cor24.1} implies 
condition \eqref{cor24.2}. For $a\in H$ we have $H=\mathcal{L}_a\cap\mathcal{R}_a$ and the fact that 
condition \eqref{cor24.2} implies condition \eqref{cor24.3} follows from Lemma~\ref{lem23}.
\end{proof}

\begin{proposition}\label{prop25}
Let $(S,*)$ be a multisemigroup and $a,b\in Q(S)$. Then $a*b\subset \mathcal{L}_b
\cap \mathcal{R}_a$. Moreover, if $S$ is simple and $H:=\mathcal{L}_a\cap \mathcal{R}_b$, 
then $a*b\neq\varnothing$ if and only if $H*H\neq \varnothing$.
\end{proposition}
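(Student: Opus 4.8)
The plan is to treat the two assertions in turn, and the inclusion $a*b\subset\mathcal{L}_b\cap\mathcal{R}_a$ is the easy part, using only that $a$ and $b$ are quarks. Take any $s\in a*b$. Since $a\in S^1$ we have $s\in a*b\subset S^1*b$, and because $b$ is a quark this principal left ideal is exactly $\mathcal{L}_b$, so $s\in\mathcal{L}_b$. Dually, since $b\in S^1$ we have $s\in a*b\subset a*S^1=\mathcal{R}_a$, using that $a$ is a quark. As $s$ was arbitrary, $a*b\subset\mathcal{L}_b\cap\mathcal{R}_a$.

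For the second assertion I assume $S$ is simple and set $H=\mathcal{L}_a\cap\mathcal{R}_b$. The first thing I would record is that $H$ is a nonempty $\mathcal{H}$-class, and this is where simplicity enters. Since $a$ and $b$ are quarks, $\mathcal{L}_a=S^1*a$ is a nonempty left ideal and $\mathcal{R}_b=b*S^1$ is a nonempty right ideal; hence Lemma~\ref{lem22} gives $H=\mathcal{L}_a\cap\mathcal{R}_b\neq\varnothing$. For any $x\in H$ one has $\mathcal{L}_x=\mathcal{L}_a$ and $\mathcal{R}_x=\mathcal{R}_b$, so $\mathcal{H}_x=H$; thus $H$ is genuinely an $\mathcal{H}$-class and Corollary~\ref{cor24} is applicable to it.

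The equivalence itself then follows by comparing $a*b$ with $H*H$ via Lemma~\ref{lem23}. The implication $H*H\neq\varnothing\Rightarrow a*b\neq\varnothing$ is immediate: from $H\subset\mathcal{L}_a$ and $H\subset\mathcal{R}_b$ we get $H*H\subset\mathcal{L}_a*\mathcal{R}_b$, so $H*H\neq\varnothing$ forces $\mathcal{L}_a*\mathcal{R}_b\neq\varnothing$, and the equivalence of conditions (a) and (c) in Lemma~\ref{lem23} yields $a*b\neq\varnothing$. For the converse I would exploit the ``all-or-nothing'' strength of Lemma~\ref{lem23}: the equivalence of (a) and (d) says that $a*b\neq\varnothing$ is in fact equivalent to $s*t\neq\varnothing$ for \emph{every} $s\in\mathcal{L}_a$ and $t\in\mathcal{R}_b$. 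Choosing any $s,t\in H$, which is possible precisely because $H\neq\varnothing$, and noting $s\in\mathcal{L}_a$ and $t\in\mathcal{R}_b$, we obtain $\varnothing\neq s*t\subset H*H$; alternatively one invokes Corollary~\ref{cor24} to pass from a single nonempty product in $H$ to $H*H\neq\varnothing$.

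I expect the main obstacle to be this converse direction, together with the twofold role of the hypotheses that underlies it. The subtlety is that $a$ and $b$ themselves need not lie in $H$, so from $a*b\neq\varnothing$ one cannot simply read off an element of $H*H$; the argument only works because Lemma~\ref{lem23} propagates non-emptiness to \emph{all} products $s*t$ with $s\in\mathcal{L}_a,\ t\in\mathcal{R}_b$, and because $H$ is nonempty so that such a test pair $s,t$ exists. Here the quark property is what upgrades $\mathcal{L}_a$ and $\mathcal{R}_b$ to honest one-sided ideals, and simplicity is what makes Lemma~\ref{lem22} available, so the real crux is establishing $H\neq\varnothing$ rather than the formal manipulation with Lemma~\ref{lem23}.
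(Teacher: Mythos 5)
Your proof is correct and follows essentially the same route as the paper's: the inclusion is immediate from the quark property, non-emptiness of $H=\mathcal{L}_a\cap\mathcal{R}_b$ comes from Lemma~\ref{lem22} applied to the minimal ideals $S^1*a$ and $b*S^1$, and both directions of the equivalence are transfers of (non-)emptiness via Lemma~\ref{lem23}. The paper's proof is just a terser version of the same argument (picking a single $y\in H$ and noting $y*y\neq\varnothing$), so no substantive difference.
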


\begin{proof} 
The first claim is obvious, so we prove the second one. Suppose $a*b\neq\varnothing$. Since $a,b\in Q(S)$ it follows that $\mathcal{L}_a$, $\mathcal{L}_b$ are minimal left ideals and $\mathcal{R}_a$, $\mathcal{R}_b$ are minimal right ideals. By Lemma~\ref{lem22} we have $H\neq \varnothing$  and hence there exists $y\in H$. 
We have $y*y\neq\varnothing$ by Lemma~\ref{lem23} and thus $H*H\neq \varnothing$.
Conversely, if  $H*H\neq \varnothing$, then $a*b\neq \varnothing$ by Lemma~\ref{lem23}.
\end{proof}

\begin{proposition}\label{cor28}
Let $(S,*)$ be a multisemigroup such that $Q(S)\neq\varnothing$. Then $Q(S)$ is a union of $\mathcal{H}$-classes of $S$, the restriction $\sim$ of the relation $\mathcal{H}$ 
to $Q(S)$ is a congruence on $Q(S)$ and $Q(S)/\hspace{-1.5mm}\sim$ is a quasi-semigroup with singleton $\mathcal{H}$-classes.
\end{proposition}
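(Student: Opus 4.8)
The plan is to prove the three assertions in turn, using Lemma~\ref{lem23} and the first part of Proposition~\ref{prop25} as the main tools, and relying on Proposition~\ref{prop7} for the fact that $Q(S)$ is a submultisemigroup. First I would show that $Q(S)$ is a union of $\mathcal{H}$-classes. If $s\in Q(S)$ and $t\,\mathcal{H}\,s$, then $t\,\mathcal{L}\,s$ and $t\,\mathcal{R}\,s$, so $S^1*t=S^1*s$ and $t*S^1=s*S^1$. As $s$ is a quark, $S^1*s$ is a minimal left ideal and $s*S^1$ a minimal right ideal, hence the same holds for $S^1*t$ and $t*S^1$, so $t\in Q(S)$. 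Thus the whole $\mathcal{H}$-class of each $s\in Q(S)$ lies in $Q(S)$, and the classes of $\sim$ are precisely the $\mathcal{H}$-classes of $S$ contained in $Q(S)$.

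Next I would verify that $\sim$ is a congruence on the multisemigroup $Q(S)$. The key observation is: if $a,b,c\in Q(S)$ with $a\,\mathcal{H}\,b$, then $\mathcal{L}_a=\mathcal{L}_b$ and $\mathcal{R}_a=\mathcal{R}_b$, so by the equivalence of (a) and (c) in Lemma~\ref{lem23} we have $c*a=\varnothing$ if and only if $c*b=\varnothing$ (both reduce to $\mathcal{L}_c*\mathcal{R}_a=\mathcal{L}_c*\mathcal{R}_b=\varnothing$); moreover, by Proposition~\ref{prop25}, both $c*a$ and $c*b$ are contained in the single $\mathcal{H}$-class $\mathcal{L}_a\cap\mathcal{R}_c$. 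To check the left congruence axiom, take $s\in c*a$ and $t\in c*b$; then $c*a$ and $c*b$ are both non-empty, so any $s'\in c*b$ and $t'\in c*a$ satisfy $s\sim s'$ and $t\sim t'$, since they lie in the same $\mathcal{H}$-class as $s$ and $t$ respectively (the case $c*a=c*b=\varnothing$ being vacuous). The right congruence axiom is symmetric, now using $\mathcal{L}_a=\mathcal{L}_b$ and the inclusions $a*c,b*c\subset\mathcal{L}_c\cap\mathcal{R}_a$. Hence $\sim$ is a congruence and $T:=Q(S)/\hspace{-1.5mm}\sim$ is a multisemigroup.

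Finally I would analyse $T$, writing $\pi$ for the projection and $\overline{a}$ for $\pi(a)$. Since $\overline{a}\circ\overline{b}=\{\overline{s}\,\vert\,s\in a*b\}$ and, by Proposition~\ref{prop25}, $a*b$ lies in a single $\mathcal{H}$-class inside $Q(S)$, all $s\in a*b$ have the same $\sim$-class; thus $\overline{a}\circ\overline{b}$ is empty or a singleton, i.e.\ $T$ is a quasi-semigroup. To see that the $\mathcal{H}$-classes of $T$ are trivial, one computes $T^1\circ\overline{a}=\pi(Q(S)^1*a)$ and notes $Q(S)^1*b\subset S^1*b=\mathcal{L}_b$. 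If $\overline{a}\leq_L\overline{b}$ in $T$, then $\overline{a}\in T^1\circ\overline{a}\subset T^1\circ\overline{b}=\pi(Q(S)^1*b)$, so $\overline{a}=\overline{b'}$ for some $b'\in Q(S)^1*b\subset\mathcal{L}_b$; as $a\,\mathcal{H}\,b'$ we get $a\,\mathcal{L}\,b'\,\mathcal{L}\,b$, hence $a\,\mathcal{L}\,b$ in $S$. The symmetric argument shows $\overline{a}\leq_R\overline{b}$ in $T$ implies $a\,\mathcal{R}\,b$. Therefore $\overline{a}\,\mathcal{H}\,\overline{b}$ in $T$ forces $a\,\mathcal{H}\,b$ in $S$, i.e.\ $a\sim b$ and $\overline{a}=\overline{b}$, so every $\mathcal{H}$-class of $T$ is a singleton.

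The step I expect to be the main obstacle is this last one: pulling the Green pre-orders of the quotient $T$ back to those of $S$. The care needed lies in the identity $T^1\circ\overline{a}=\pi(Q(S)^1*a)$ and in the inclusion $Q(S)^1*b\subset S^1*b=\mathcal{L}_b$ (valid because $b$ is a quark), which together let an inequality in $T$ be converted into the $\mathcal{L}$- and $\mathcal{R}$-relations of the original quarks; the rest, including the behaviour of adjoined identities, is routine bookkeeping.
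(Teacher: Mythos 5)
Your proof is correct. For the first two claims it follows essentially the paper's own line: the paper likewise observes that membership in $Q(S)$ depends only on the $\mathcal{H}$-class, and verifies the congruence property by noting that for $a\sim b$ and $c\in Q(S)$ the products $a*c$ and $b*c$ both land in $I'\cap J$, where $I'$ is the minimal left ideal containing $c$ and $J$ the minimal right ideal containing $a$ and $b$ --- which is exactly the content of Proposition~\ref{prop25} that you invoke; your additional appeal to Lemma~\ref{lem23} to match up the emptiness of $c*a$ and $c*b$ is a precision the paper leaves implicit in its reading of the congruence axiom. Where you genuinely diverge is the last claim. The paper identifies $Q(S)/\hspace{-1.5mm}\sim$ with the set of pairs $(I,J)$, where $I$ is a minimal left ideal and $J$ a minimal right ideal with $I\cap J\neq\varnothing$, and writes the multiplication explicitly: $(I,J)\cdot(I',J')$ equals $(I',J)$ when $(I\cap J)*(I'\cap J')\neq\varnothing$ and is empty otherwise; the quasi-semigroup property and the triviality of $\mathcal{H}$-classes are then read off from this rectangular-band-like table. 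You instead pull the Green pre-orders of the quotient back to $S$, via $T^1\circ\overline{a}=\pi(Q(S)^1*a)$ and $Q(S)^1*b\subset S^1*b=\mathcal{L}_b$, concluding that $\overline{a}\,\mathcal{H}\,\overline{b}$ in the quotient forces $a\,\mathcal{H}\,b$ in $S$. Both routes are valid (and your ``bookkeeping'' about adjoined identities does check out, since only the inclusion $T^1\circ\overline{b}\subset\pi(Q(S)^1*b)$ together with $\overline{a}\in T^1\circ\overline{a}$ is actually needed). The trade-off: the paper's identification yields an explicit structural model of the quotient, which is reused later in the paper (for instance in Subsection~\ref{s7.4}, where maximal nilpotent submultisemigroups are counted via incidence matrices), whereas your pullback argument is more self-contained and spares you the verification that the pair $(I,J)\mapsto I\cap J$ correspondence is a well-defined bijection compatible with the multiplication.
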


\begin{proof}
If $a\in Q(S)$ and $b\,\mathcal{H}\, a$, then clearly $b\in Q(S)$, so $Q(S)$ is a union of $\mathcal{H}$-classes.  Let $a,b\in Q(S)$ be such that $a\sim b$. Then there exist a minimal left ideal $I$ and a minimal right ideal $J$
of $S$ such that $a,b\in I\cap J$. Let further $c\in Q(S)$. Then there exist a minimal left ideal $I'$ and a minimal right ideal $J'$
of $S$ such that $c\in I'\cap J'$. But then we have both $a*c\subset I'\cap J$ and $b*c\subset I'\cap J$, which
implies that $\sim$ is a right congruence on $Q(S)$. That $\sim$ is a left congruence is proved similarly. Hence $\sim$ is a congruence
on $Q(S)$. The elements of $Q(S)/\hspace{-1.5mm}\sim$ can be identified with pairs $(I,J)$, where $I$ is a minimal left ideal in $S$
and $J$ is a minimal right ideal in $S$ such that $I\cap J \neq\varnothing$ (the pair $(I,J)$ corresponds to $I\cap J$). The above also shows that 
the multiplication in $Q(S)/\hspace{-1.5mm}\sim$ is given by 
\begin{displaymath}
(I,J)\cdot (I',J')=
\begin{cases} 
(I',J), & (I\cap J)*(I'\cap J')\neq \varnothing;\\
\varnothing, & \text{ otherwise}.
\end{cases}
\end{displaymath}
This is obviously a  quasi-semigroup with singleton $\mathcal{H}$-classes. 
\end{proof}

\subsection{Structure of strongly simple multisemigroups}\label{s5.2}

Let $(S,*)$ be a strongly simple multisemigroup.  Then $S$ is both, 
the (disjoint) union of its minimal left ideals and the (disjoint) union of its minimal right ideals. At the same time, if $S$ is simple and every element of $S$ generates a minimal left and a minimal right ideal, then
$S$ is strongly simple. Furthermore, 
$S^1*a=\mathcal{L}_a$ and $a*S^1=\mathcal{R}_a$ for any $a\in S$. In this subsection we show how the results of the 
previous subsection can be strengthened in the case of strongly simple multisemigroups. It turns out that hypergroups arise
naturally and play a very important role in this case (similar to the role of group $\mathcal{H}$-classes
for completely $0$-simple semigroups).

\begin{theorem}[Structure of strongly simple multisemigroups]
\label{thm31}
Let $(S,*)$ be a strongly simple multisemigroup.
\begin{enumerate}[$($a$)$]
\item\label{thm31.0} For any $a,b\in S$ we have $\mathcal{L}_a\cap \mathcal{R}_b\neq\varnothing$.
\item\label{thm31.1} If $H$ is an $\mathcal{H}$-class in $S$, then either $H*H=\varnothing$ or $H$ is a hypergroup.
\item\label{thm31.2} For $a,b\in S$ we have $a*b\neq\varnothing$ if and only if $\mathcal{L}_a\cap \mathcal{R}_b$
is a hypergroup.
\item\label{thm31.3} Assume $S\not\cong \mathbf{0}$. Then every $\mathcal{L}$-class and every $\mathcal{R}$-class 
in $S$ contains at least one hypergroup $\mathcal{H}$-class.
\item\label{thm31.4} Let $a,b\in S$ be such that $a\,\mathcal{R}\,b$ and let $s\in S^1$ be such that 
$b\in a*s$. Then the multivalued map $x\mapsto x*s$ is surjective from $\mathcal{L}_a$ to $\mathcal{L}_b$ and 
preserves both $\mathcal{R}$- and $\mathcal{H}$-classes, that is, for any $\mathcal{H}$-class $H\subset \mathcal{L}_a$
we have $H*s$ is an $\mathcal{H}$-class in $\mathcal{L}_b$ and the two $\mathcal{H}$-classes $H$ and $H*s$ are contained
in the same $\mathcal{R}$-class.
\item\label{thm31.5} Assume $S\not\cong \mathbf{0}$.  Let $I$ be a minimal left ideal of $S$ and $J$ a minimal 
right ideal of $S$. Then $I\cap J=J*I$.
\item\label{thm31.6} $\mathcal{H}$ is a congruence on $S$ and the quasi-semigroup $S/\mathcal{H}$ is bisimple 
(i.e. contains a unique $\mathcal{D}$-class) with singleton $\mathcal{H}$-classes. 
\end{enumerate}
\end{theorem}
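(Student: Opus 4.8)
The plan is to prove the seven claims roughly in order, using the machinery already assembled in Subsection~\ref{s5.1} and exploiting the defining feature of strong simplicity, namely $S^1*a=\mathcal{L}_a$ and $a*S^1=\mathcal{R}_a$ for every $a\in S$, so that every element is a quark. For claim~\eqref{thm31.0} I would invoke Lemma~\ref{lem22}: since $\mathcal{L}_a=S^1*a$ is a non-empty left ideal and $\mathcal{R}_b=b*S^1$ is a non-empty right ideal of the simple multisemigroup $S$, their intersection is non-empty. Claim~\eqref{thm31.2} should then follow by combining Proposition~\ref{prop25} (which gives $a*b\neq\varnothing\iff H*H\neq\varnothing$ for $H=\mathcal{L}_a\cap\mathcal{R}_b$) with claim~\eqref{thm31.1}, which identifies ``$H*H\neq\varnothing$'' with ``$H$ is a hypergroup''; so the real content is \eqref{thm31.1}.

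The heart of the argument is claim~\eqref{thm31.1}. Given an $\mathcal{H}$-class $H$ with $H*H\neq\varnothing$, I must show $H$ is a hypergroup, i.e. $H*h=h*H=H$ for all $h\in H$. First, by Corollary~\ref{cor24}, $H*H\neq\varnothing$ forces $s*t\neq\varnothing$ for \emph{all} $s,t\in H$. Next, by the first part of Proposition~\ref{prop25}, for $a,b\in H\subset Q(S)$ we have $a*b\subset\mathcal{L}_b\cap\mathcal{R}_a=H$, so $H$ is closed and thus a submultisemigroup; in particular $H*h\subset H$. For the reverse inclusion $H\subset H*h$, take any $x\in H$; I would use that $x\,\mathcal{L}\,h$ and $h$ generates a \emph{minimal} left ideal $\mathcal{L}_h=S^1*h$, so $x\in S^1*h$, i.e. $x\in s*h$ for some $s\in S^1$, and then argue that $s$ can be taken inside $H$ by intersecting with the $\mathcal{R}$-class and using minimality together with non-emptiness of products within $H$. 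Symmetrically $H\subset h*H$, giving the reproduction axiom. I expect this step—upgrading the $\mathcal{L}$-relation $x\,\mathcal{L}\,h$ (a statement about $S^1$) to a product $x\in s*h$ with $s\in H$ itself—to be the main obstacle, and the tool for it is precisely the minimality of the left/right ideals guaranteed by $a,b$ being quarks.

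With \eqref{thm31.1} in hand, claim~\eqref{thm31.3} follows by a non-emptiness/minimality argument: for $S\not\cong\mathbf{0}$ one shows $a*a\neq\varnothing$ cannot fail on an entire $\mathcal{L}$-class without producing a proper ideal, contradicting simplicity, so some $\mathcal{H}$-class in each $\mathcal{L}$-class (and each $\mathcal{R}$-class) satisfies $H*H\neq\varnothing$ and is therefore a hypergroup. Claim~\eqref{thm31.4} is the analogue of Green's lemma: surjectivity of $x\mapsto x*s$ from $\mathcal{L}_a$ to $\mathcal{L}_b$ follows from minimality of $\mathcal{L}_b$ together with $b\in a*s$, while the preservation of $\mathcal{R}$- and $\mathcal{H}$-classes is checked directly from the definitions of the Green pre-orders and associativity; the novelty, flagged in the introduction, is that the map is only surjective and multivalued rather than bijective. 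For claim~\eqref{thm31.5} the inclusion $J*I\subset I\cap J$ is immediate (as in Lemma~\ref{lem22}), and the reverse inclusion $I\cap J\subset J*I$ is obtained by picking $x\in I\cap J$, noting $x$ lies in a hypergroup $\mathcal{H}$-class by \eqref{thm31.2}, and using the reproduction axiom to write $x\in x*x\subset J*I$. Finally, claim~\eqref{thm31.6} is essentially a specialization of Proposition~\ref{cor28}: since $S=Q(S)$, the relation $\mathcal{H}$ is a congruence on all of $S$ and $S/\mathcal{H}$ is the quasi-semigroup on pairs $(I,J)$ described there; bisimplicity (a single $\mathcal{D}$-class) then follows from \eqref{thm31.0}, which shows every minimal left ideal meets every minimal right ideal, so all the $(I,J)$ lie in one $\mathcal{D}$-class.
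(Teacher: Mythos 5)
Your proposal is right in outline for claims (a), (c), (e), and most of (g) (and your route to (g) --- descending Green's relations along the canonical strong homomorphism and using (a) to see that all pairs $(I,J)$ occur --- is a legitimate alternative to the paper, which instead deduces bisimplicity from (c) and (d)). But there are two genuine gaps. The first is in claim (b), the heart of the theorem, at exactly the step you flag as ``the main obstacle.'' Your plan --- write $x\in s*h$ with $s\in S^1$ and then force $s$ into $H$ ``by intersecting with the $\mathcal{R}$-class'' --- only delivers half of what is needed: since $s*S^1$ is a minimal right ideal containing $x$, you do get $s\in\mathcal{R}_x$ for free, but nothing in your sketch puts $s$ into $\mathcal{L}_x$, and in general it cannot be put there by any pointwise replacement; the reproduction axiom has to be extracted differently. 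The missing idea is to multiply by the whole $\mathcal{L}$-class $L\supseteq H$ rather than by $H$: the paper first notes that $L*R$ is a non-empty two-sided ideal, hence equals $S$ by simplicity, and uses minimality of the ideals $b*S^1$ and $S^1*c$ to conclude $H=H*H$; then it shows $H=a*L$ for $a\in H$, observes that $L*a$ is a non-empty left ideal contained in $L$ (so $L*a=L$ by minimality), and finishes by associativity: $H*a=(a*L)*a=a*(L*a)=a*L=H$, and symmetrically $a*H=H$. (Equivalently one can show $H=h*L$ directly: from $x\in s*h$ one gets $s\in\mathcal{R}_h=h*S^1$, so $x\in h*(u*h)\subseteq h*L$, while $h*L\subseteq L\cap\mathcal{R}_h=H$.) Without this detour through products with $L$, the inclusion $H\subseteq H*h$ does not come out of your sketch.

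The second gap is in claim (f), where the proposed argument is actually false. You take $x\in I\cap J$, assert that $x$ ``lies in a hypergroup $\mathcal{H}$-class by (c),'' and conclude $x\in x*x\subseteq J*I$. But (c) only says that $\mathcal{L}_a\cap\mathcal{R}_b$ is a hypergroup \emph{when} $a*b\neq\varnothing$; applied with $a=b=x$ it says $I\cap J=\mathcal{H}_x$ is a hypergroup precisely when $x*x\neq\varnothing$, which is what can fail. Concretely, in the strongly simple quasi-semigroup associated to the $5$-element Brandt semigroup (elements $(i,j)$, $i,j\in\{1,2\}$, with $(i,j)*(k,l)=(i,l)$ if $j=k$ and $\varnothing$ otherwise), take $I=\mathcal{L}_{(1,2)}$ and $J=\mathcal{R}_{(1,2)}$: then $I\cap J=\{(1,2)\}$ and $(1,2)*(1,2)=\varnothing$, so this $\mathcal{H}$-class is not a hypergroup and $x\in x*x$ fails, even though $I\cap J=J*I$ does hold (via $(1,1)*(1,2)$). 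Moreover, even when $\mathcal{H}_x$ \emph{is} a hypergroup, reproduction only gives $x\in x*y$ for some $y\in\mathcal{H}_x$, not $x\in x*x$ (in a group this would force $x$ to be the identity). The correct argument, which is the paper's, uses Lemma~\ref{lem22} to find $a\in J$, $b\in I$ with $a*b\neq\varnothing$ and then applies the translation statement (e) to conclude $\mathcal{H}_a*b=I\cap J$, giving $I\cap J\subseteq J*I$; so (f) genuinely depends on (e), not on (c).
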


\begin{proof}
Claim \eqref{thm31.0} is a special case of Lemma~\ref{lem22}.

Let us prove claim \eqref{thm31.1}. Let $H$ be an $\mathcal{H}$-class in $S$ and assume that $H*H\neq \varnothing$.  Let $L$ and $R$ be the $\mathcal{L}$-class and the $\mathcal{R}$-class whose intersection is $H$.
We have that $L*R$ is an ideal of $S$ and is non-empty (since $H*H\neq \varnothing$). Hence $L*R=S$. We first show that $H*H=H$. Let $a\in H$. Since $L*R=S\supseteq H$, it follows that $a\in b*c$ for some $b\in L$ and $c\in R$. But $b*c\subseteq b*S^1\cap S^1*c$. Since the latter is an $\mathcal{H}$-class and $b*c\cap H\neq\varnothing$ then $H=b*S^1\cap S^1*c$. Since also $H=S^1*b\cap c*S^1$ it follows that  $b\in H$ and $c\in H$. Therefore $H=H*H$, as required.
Let $a\in H$. Show that $H*a=H$. Let $c\in H$. As $H=H*H$, there are some $x,y\in H$ such that $c\in x*y$. 
Now $a\,{\mathcal R}\,x$ implies $x\in a*t$ for some $t\in S^1$.
We then have $c\in x*y\subseteq  a*t*y$.  So $c\in a*L$ (because $t* y\subseteq L$). We have shown that $H\subseteq a*L$.  So $H=a*L$ since clearly $a*L\subseteq H$. Observe that $L*a$ is a non-empty left ideal and so $L*a=L$. Hence we have
$$H*a=(a*L)*a =a*(L*a)= a*L=H. 
$$
Similarly we verify that $a*H=H$ and claim \eqref{thm31.1} follows. 

Claim \eqref{thm31.2} follows from
claim \eqref{thm31.1} and Proposition~\ref{prop25}. 

We prove claim \eqref{thm31.3} for $\mathcal{L}$-classes (for $\mathcal{R}$-classes the proof is similar).
If $|S|=1$, the claim is obvious. Consider the case  $|S|>1$. Then for any different $a,b\in S$ we have $S^1*a*S^1=S^1*b*S^1\ni b$,
which implies $S*S\neq\varnothing$, in particular, there exist $s,t\in S$ such that  $s*t\neq \varnothing$. 
Set $H:=\mathcal{L}_s\cap \mathcal{R}_t$. Claim \eqref{thm31.1} implies that $H$ is a hypergroup
$\mathcal{H}$-class in $\mathcal{L}_s$. Let $L$ be an $\mathcal{L}$-class
different from $\mathcal{L}_s$, $x\in L\cap \mathcal{R}_t$ and $y\in H$. Then $y\neq x$ and $y\,\mathcal{R}\,x$ 
and hence there is $u\in S$ such that $y\in x*u$. Consider $H'=S^1*x\cap u*S^1$, which is an
$\mathcal{H}$-class by Lemma~\ref{lem22}. Proposition~\ref{prop25} along with claim \eqref{thm31.1}
imply that $H'$ is a hypergroup. Claim \eqref{thm31.3} follows.

In the setup of claim \eqref{thm31.4} we have $\mathcal{L}_a*s\neq\varnothing$ and hence $\mathcal{L}_a*s$ 
is a left ideal of
$S$ contained in $\mathcal{L}_s=\mathcal{L}_b$. Then $\mathcal{L}_a*s=\mathcal{L}_b$ by minimality of
$\mathcal{L}_b$. If $c\in \mathcal{L}_b$ is such that $c\in x*s$, then $c\,\mathcal{R}\,x$ and hence the map
preserves $\mathcal{R}$-classes and, consequently, $\mathcal{H}$-classes. This proves claim \eqref{thm31.4}.

To prove claim \eqref{thm31.5} we note that $J*I\neq \varnothing$ by Lemma~\ref{lem22}
and $J*I\subset I\cap J$. Let $a\in J$ and $b\in I$ be such that $a*b\neq\varnothing$.
Then claim \eqref{thm31.4} implies that $\mathcal{H}_a*b=I\cap J$. This proves claim \eqref{thm31.5}.

Finally, claim \eqref{thm31.6} follows from Proposition~\ref{cor28} and claims
\eqref{thm31.2} and \eqref{thm31.3}.
\end{proof}

\subsection{Example: a finite simple multisemigroup with nontrivial inclusions of principal one-sided ideals}\label{s5.205}

Let $(X,\leq)$ be a partially ordered set possessing a unique minimal element $y$. Consider the rectangular band $T=X\times X$. For $a\in X$ let $a^{\downarrow}=\{x\in X:x\leq a\}$ be the {\em downward closure} of $a$. We define the function $f:T\to 2^T$ by setting
\begin{equation}\label{eq111}
 f(a,b)=a^{\downarrow}\times b^{\downarrow}.\end{equation}
 It is easy to check that the function $f$ satisfies the equality $f(f(x)f(y))=f(x)f(y)$ and therefore
$S=(T,\ast)$ with $(a,b)\ast (c,d)=f(a,b)f(c,d)$ is a multisemigroup (see Subsection~\ref{s3.14}).
Moreover,
one checks that $S$ is a simple multisemigroup and $Q(S)=\{(y,y)\}$. Clearly $Q(S)$ is a strongly simple multisemigroup. 
Note that $(a,b)\ast S^1$ is a minimal right ideal if and only if $a=y$; $S^1\ast (a,b)$ is a minimal 
left ideal if and only if  $b=y$. Let $a_1<\dots <a_m$ be a chain in $X$ and $c\in X$ be any element.  Then we have a chain of principal left ideals 
\begin{equation}\label{eq112}
S^1\ast (c,a_1)\subsetneq S^1\ast (c,a_2)\subsetneq \dots \subsetneq S^1\ast (c,a_m). 
\end{equation}

The above example can be generalized as follows. Let $I$ be a set and let $(X_i,\leq_i)$, $i\in I$, be partially ordered sets each possessing a unique minimal element $y_i$, $i\in I$. We assume that the sets $X_i$ are pairwise disjoint. Let $Y=\{y_i\colon i\in I\}$. Let $X=\cup_{i\in I}X_i$. We define on $X$ the partial order $\leq$ by setting $x\leq y$ provided that there is $i$ such that $x,y\in X_i$ and $x\leq_i y$. Consider the rectangular band $T=X\times X$
and define $f:T\to 2^T$ as given in \eqref{eq111}.

The map $f$ gives a simple multisemigroup $S=(T,*)$ according to Subsection~\ref{s3.14}. The
submultisemigroup $Q(S)$ of $S$ is equal to $Y\times Y$ and is a rectangular band, hence is a strongly simple multisemigroup (in fact, a semigroup); 
$(a,b)\ast S^1$ is a minimal right ideal if and only if $a\in Y$; $S^1\ast (a,b)$ is a minimal 
left ideal if and only if  $b\in Y$. Let   
$a_1<\dots <a_m$ be a chain in $X$. Let $c\in Z$ be any element.  Then  \eqref{eq112} gives a chain of left ideals.

\subsection{Example: a finite simple multisemigroup whose support is not simple}\label{s5.207}

Let $X=\{1,2\}$ with the order given by $1<2$. Let $S=X\times X$. Let $(S,\ast)=S(X)$ be the multisemigroup constructed in Subsection \ref{s5.205}. That is,  the multiplication $\ast$ on $S$ is given by the map $\ast:S\times S\to 2^S$:\begin{displaymath}
(i,j)\ast (k,l)= i^{\downarrow}\times l^{\downarrow}.
\end{displaymath}
Let $T=S\cup (1',1')$ (here $1'$ is an element different from both $1$ and $2$). Define the map $\pi:T\to S$ by setting $\pi(a)=a$, if $a\in S$, and $\pi((1',1'))=(1,1)$. Let  $\circ:T\times T\to 2^T$ 
be defined as follows:
\begin{displaymath}
x\circ y = 
\begin{cases}
\pi(x)\ast \pi(y),&  x\in \{(1,1), (1',1'),(1,2)\} \text{ and }  y\in \{(1,1), (1',1'),(2,1)\};\\
(\pi(x)\ast \pi(y))\cup (1',1'),& \text{ otherwise }. 
\end{cases}
\end{displaymath}

\begin{lemma} \label{lem31}
$(T,\circ)$ is a multisemigroup.
\end{lemma}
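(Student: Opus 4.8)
The plan is to extend $\circ$ to subsets of $T$ as in \eqref{eq2} and to prove associativity in the equivalent form $(x\circ y)\circ z=x\circ(y\circ z)$ for all $x,y,z\in T$, comparing the two sides element by element. The key preliminary observation is that, since $1$ is the minimum of $X$, the point $(1,1)$ lies in $i^{\downarrow}\times l^{\downarrow}=\pi(x)\ast\pi(y)$ for all $x,y\in T$; in particular every product $x\circ y$ contains $(1,1)$ and is therefore non-empty. Writing $\pi$ also for its extension $\pi(U)=\{\pi(u):u\in U\}$ to subsets, and noting that $(1,1)$ and $(1',1')$ are the only elements of $T$ with a common image, the fact that $(1,1)\in\pi(x)\ast\pi(y)$ always means that adjoining $(1',1')$ in the ``otherwise'' case never changes the image. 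Thus one gets the clean identity $\pi(x\circ y)=\pi(x)\ast\pi(y)$, i.e. $\pi$ behaves like a strong homomorphism onto $(S,\ast)$.

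First I would compare the $\pi$-images in $S$. Combining $\pi(x\circ y)=\pi(x)\ast\pi(y)$ with the distributivity \eqref{eq3} yields $\pi\big((x\circ y)\circ z\big)=(\pi(x)\ast\pi(y))\ast\pi(z)$ and, symmetrically, $\pi\big(x\circ(y\circ z)\big)=\pi(x)\ast(\pi(y)\ast\pi(z))$; these coincide since $\ast$ is associative on $S$. Now $\pi$ is injective on $S$ and $(1',1')$ is the only element of $T$ outside $S$, with image $(1,1)$, so for each $a\in S\setminus\{(1,1)\}$ and any $U\subseteq T$ we have $a\in U\Leftrightarrow a\in\pi(U)$. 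Hence $(x\circ y)\circ z$ and $x\circ(y\circ z)$ contain exactly the same elements of $S\setminus\{(1,1)\}$; both contain $(1,1)$ by the opening observation; and therefore the two sides can disagree only about the single element $(1',1')$.

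The crux is thus to match the membership of $(1',1')$ in the two triple products. Write $A=\{(1,1),(1',1'),(1,2)\}$ and $B=\{(1,1),(1',1'),(2,1)\}$ for the two sets in the definition of $\circ$; by construction $(1',1')\in u\circ v$ precisely when it is \emph{not} the case that $u\in A$ and $v\in B$. The main step is the pair of ``escape'' statements: $x\circ y\subseteq A$ iff $x\in A$, and $y\circ z\subseteq B$ iff $z\in B$. These follow by inspection: membership in $A$ (resp.\ $B$) records that the first (resp.\ second) coordinate of the image is $1$, the first coordinates occurring in $\pi(x)\ast\pi(y)=i^{\downarrow}\times l^{\downarrow}$ are exactly those in $i^{\downarrow}$ (dually the second coordinates range over $l^{\downarrow}$), and $(1',1')\in A\cap B$. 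Granting this, a short case split on whether $x\in A$ and whether $z\in B$ shows that each side contains $(1',1')$ iff $\neg(x\in A\wedge z\in B)$: when $z\in B$ the left side acquires $(1',1')$ iff some $s\in x\circ y$ escapes $A$, i.e. iff $x\notin A$, and dually the right side acquires $(1',1')$, when $x\in A$, iff some $t\in y\circ z$ escapes $B$, i.e. iff $z\notin B$; the remaining cases are immediate from non-emptiness of the products. Matching these two conditions finishes the element-by-element comparison.

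I expect the only real obstacle to be the bookkeeping in this last paragraph, namely keeping precise track of when a product leaves $A$ or $B$; everything else is formal once the homomorphism identity $\pi(x\circ y)=\pi(x)\ast\pi(y)$ is established.
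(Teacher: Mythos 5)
Your proof is correct and is in essence the paper's own argument: both rest on the observation that $(1,1)$ lies in every product (so the adjoined element $(1',1')$ is invisible under $\pi$, giving your identity $\pi(x\circ y)=\pi(x)\ast\pi(y)$), on associativity of $\ast$ in $S$, and on the identical criterion that $(1',1')$ is absent from either triple product exactly when $x\in\{(1,1),(1',1'),(1,2)\}$ and $z\in\{(1,1),(1',1'),(2,1)\}$. The only difference is packaging: the paper rewrites $(x\circ y)\circ z$ as $(\pi(x)\ast\pi(y))\circ z$ and computes both sides directly as subsets, whereas you compare $\pi$-images first and then track membership of the single element $(1',1')$ — the same bookkeeping either way.
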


\begin{proof} 
Observe that we have $(1,1)\circ x = (1',1')\circ x$ and $x\circ (1,1)=x\circ (1',1')$ for any $x\in T$. 

Let $x,y,z\in T$. We have to verify that $(x\circ y)\circ z = x\circ (y\circ z)$. Note that $(1,1)\in s\circ t$ for any $s,t\in T$. It follows that $(x\circ y)\circ z=(\pi(x)\ast \pi(y))\circ z$ and, similarly, $x\circ (y\circ z)=x\circ (\pi(y)\ast \pi(z))$.

By the definition, $(\pi(x)\ast \pi(y))\circ z$ equals either $(\pi(x)\ast \pi(y))\ast \pi(z)$ or $((\pi(x)\ast \pi(y))\ast \pi(z))\cup \{(1',1')\}$. Let us establish when it equals $(\pi(x)\ast \pi(y))\ast \pi(z)$.
From the definition we have that this happens if and only if $z\in \{(1,1), (1',1'),(2,1)\}$ and  
$\pi(x)\ast \pi(y)\subseteq \{(1,1),(1,2)\}$. The latter inclusion holds if and only if 
$x\in \{(1,1), (1',1'),(1,2)\}$. 

Similarly, $x\circ (\pi(y)\ast \pi(z))$ equals either $\pi(x)\ast (\pi(y)\ast \pi(z))$ or $(\pi(x)\ast (\pi(y)\ast \pi(z)))\cup \{(1',1')\}$ and one shows that $x\circ (\pi(y)\ast \pi(z))=\pi(x)\ast (\pi(y)\ast \pi(z))$ if and 
only if $x\in \{(1,1), (1',1'),(1,2)\}$ and $z\in \{(1,1), (1',1'),(2,1)\}$. The equality 
$(x\circ y)\circ z = x\circ (y\circ z)$ follows.
\end{proof}

It is easy to check that $T$ is a simple multisemigroup.  For $x\in T$ we have 
$x\circ T^1= (\pi(x)\ast S^1)\cup \{(1',1')\}$, $T^1\circ x = (S^1\ast \pi(x)) \cup \{(1',1')\}$.
Furthermore, $Q(T)=\{(1,1), (1',1')\}$, $(1,1)\circ T^1 \cap T^1\circ (1,1)=\{(1,1),(1',1')\}$.
Both $(1,1)$ and $(1',1')$ belong to the same $\mathcal{H}$-class of $T$.
At the same, $Q(T)\circ Q(T)=\{(1,1)\}$ and hence $Q(T)$ is not a hypergroup.
Clearly, $(1,1)$ and $(1',1')$ are not in the same $\mathcal{J}$-class of $Q(T)$ and so $Q(T)$ is not simple.

\subsection{Example: a finite simple multisemigroup for which ${\mathcal R}\circ {\mathcal L}\neq {\mathcal  L}\circ {\mathcal  R}$}\label{s5.31} Let $X=\{1,2\}$ and $S=X\times X$. Let $(S,\ast)=S(X)$ be the multisemigroup from Subsection  \ref{s5.207}.  We have 
$(i,j)\ast (k,l)\ast (p,q)= i^{\downarrow}\times q^{\downarrow}$.

Let $T=S\setminus \{(2,2)\}$.
We define a map $T\times T\to 2^T$, $(x,y)\mapsto x\bullet y$, by
$$
(i,j)\bullet (k,l) = ((i,j)\ast (k,l))\setminus \{(2,2)\}.
$$

\begin{lemma} $(T,\bullet)$ is a multisemigroup. 
\end{lemma}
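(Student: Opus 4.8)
The plan is to verify the associativity axiom \eqref{eq1} for $\bullet$ by a direct computation, after first noting why the quotient machinery does not apply. The set $\{(2,2)\}$ is \emph{not} a two-sided ideal of $(S,\ast)$: since $(2,2)\ast(k,l)=2^{\downarrow}\times l^{\downarrow}=\{1,2\}\times l^{\downarrow}$ always contains points with first coordinate $1$, it is not even a right ideal, so the Rees quotient construction of Subsection~\ref{s4.3} is unavailable. Instead I would exploit the explicit product formula in $S$: with $1^{\downarrow}=\{1\}$ and $2^{\downarrow}=\{1,2\}$, the product $(i,j)\ast(k,l)=i^{\downarrow}\times l^{\downarrow}$ depends only on the first coordinate $i$ of the left factor and the second coordinate $l$ of the right factor, and the (associative) triple product is $(i,j)\ast(k,l)\ast(p,q)=i^{\downarrow}\times q^{\downarrow}$.

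The goal is to show that for all $x=(i,j)$, $y=(k,l)$, $z=(p,q)$ in $T$, both bracketings $(x\bullet y)\bullet z$ and $x\bullet(y\bullet z)$ equal $(i^{\downarrow}\times q^{\downarrow})\setminus\{(2,2)\}$; associativity of $\bullet$ then follows from associativity of $\ast$. Writing $A:=x\bullet y=(x\ast y)\setminus\{(2,2)\}$, the left-bracketed product unwinds to $(A\ast z)\setminus\{(2,2)\}$, so the crux is the claim that $A\ast z=(x\ast y)\ast z$, i.e.\ that deleting $(2,2)$ before multiplying on the right changes nothing.

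The mechanism is a ``shadow element'' argument: $(2,2)$ and $(2,1)$ share the same first coordinate, hence $(2,2)\ast z=(2,1)\ast z$ for every $z$. I would argue by cases. If $(2,2)\notin x\ast y$ then $A=x\ast y$ and there is nothing to prove. If $(2,2)\in x\ast y$ then necessarily $i=2$ and $l=2$, so $(2,1)\in x\ast y$ as well, and $(2,1)\neq(2,2)$ gives $(2,1)\in A$; therefore $(2,2)\ast z=(2,1)\ast z\subseteq A\ast z$, so adjoining the contribution of the deleted point adds nothing and $A\ast z=(x\ast y)\ast z$. The computation for $x\bullet(y\bullet z)$ is entirely symmetric, using instead that $(2,2)$ and $(1,2)$ share the same second coordinate (so $x\ast(2,2)=x\ast(1,2)$) and that whenever $(2,2)\in y\ast z$ one also has $(1,2)\in y\ast z$.

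The step I would treat most carefully — and the only genuine subtlety — is precisely this bookkeeping: one must confirm that the point $(2,2)$ removed at the intermediate stage is always accompanied, \emph{inside the same pairwise product}, by a surviving element ($(2,1)$ on the right-hand pass, $(1,2)$ on the left-hand pass) reproducing exactly the same outer contribution. Everything else is routine: removing a fixed point commutes with unions, $\bigl(\bigcup_{w} w\ast z\bigr)\setminus\{(2,2)\}=\bigcup_{w}\bigl(w\ast z\setminus\{(2,2)\}\bigr)$, and the downward-closure identity $\bigcup_{a\in i^{\downarrow}}a^{\downarrow}=i^{\downarrow}$ collapses the iterated products back to $i^{\downarrow}\times q^{\downarrow}$, yielding the common value $(i^{\downarrow}\times q^{\downarrow})\setminus\{(2,2)\}$ for both bracketings.
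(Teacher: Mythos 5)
Your proof is correct and follows essentially the same route as the paper: the paper's key observation is exactly your ``shadow element'' step, namely that $(2,2)\in(i,j)\ast(k,l)$ forces $(2,1)\in(i,j)\ast(k,l)$, so that deleting $(2,2)$ before right-multiplication does not change the product, with the symmetric argument (via $(1,2)$) handling the other bracketing. Your explicit treatment of that symmetric pass, and the remark that $\{(2,2)\}$ is not an ideal so the Rees quotient machinery cannot be invoked, are just slightly more detailed versions of what the paper compresses into ``Similarly we show''.
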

\begin{proof} 
Let $(i,j)$, $(k,l)$,  $(p,q)\in T$.
Observe that if $(i,j)\ast (k,l)\ni (2,2)$, then $(i,j)\ast (k,l)\ni (2,1)$ and hence $((i,j)\ast (k,l))\ast (p,q)=(((i,j)\ast (k,l))\setminus \{(2,2)\})\ast (p,q)$.
Using this we calculate 
\begin{multline*}
((i,j)\bullet (k,l))\bullet (p,q)=((((i,j)\ast (k,l))\setminus \{(2,2)\})\ast (p,q))\setminus \{(2,2)\}=\\=
(((i,j)\ast (k,l))\ast (p,q))\setminus \{(2,2)\}. 
\end{multline*}
Similarly we show that 
$$(i,j)\bullet ((k,l)\bullet (p,q))=((i,j)\ast ((k,l)\ast (p,q)))\setminus \{(2,2)\}.
$$
Associativity of $\bullet$ follows.
\end{proof}

It is easy to see that $T$ is simple and $Q(T)=\{(1,1)\}$. Further,
it is easy to calculate Green's relations on $T$. We have ${\mathcal R}_{(1,1)}=\{(1,1),(1,2)\}$ and ${\mathcal R}_{(2,1)}=\{(2,1)\}$. Also ${\mathcal L}_{(1,1)}=\{(1,1),(2,1)\}$ and ${\mathcal L}_{(1,2)}=\{(1,2)\}$.
We see that $(1,2)\,\, {\mathcal R}\,\, (1,1)\,\, {\mathcal L}\,\, (2,1)$ so that $((1,2),(2,1))\in {\mathcal R}\circ {\mathcal L}$. But since  ${\mathcal L}_{(1,2)}\cap {\mathcal R}_{(2,1)}=\varnothing$, we have  $((1,2),(2,1))\not\in {\mathcal L}\circ {\mathcal R}$. In particular, ${\mathcal L}\circ {\mathcal R}\neq {\mathcal R}\circ {\mathcal L}$. Note that
we always have ${\mathcal L}\circ {\mathcal R}= {\mathcal R}\circ {\mathcal L}$ for semigroups.

\subsection{Simple  multisemigroups with identity}\label{s5.209}

A simple finite semigroup with identity is a group. For multisemigroups the situation is much more complicated.

\begin{lemma}\label{lem91}
Let $(S,*)$ be a strongly simple multisemigroup with identity. Then $S$ is a hypergroup.
\end{lemma}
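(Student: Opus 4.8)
The plan is to exploit the identity $1$ to collapse $S$ to a single $\mathcal{H}$-class and then invoke Proposition~\ref{prop29}. First I observe that, since $S$ possesses an identity, we have $S^1=S$. Consequently the principal one-sided ideals generated by $1$ are
$S^1*1=\bigcup_{s\in S}s*1=\bigcup_{s\in S}\{s\}=S$ and, symmetrically, $1*S^1=S$.

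Next, because $S$ is strongly simple we have $S=Q(S)$, so $1$ is a quark. By the properties of quarks recorded in Subsection~\ref{s5.1} this gives $\mathcal{L}_1=S^1*1$ and $\mathcal{R}_1=1*S^1$. Combining this with the previous computation yields $\mathcal{L}_1=\mathcal{R}_1=S$, and hence $\mathcal{H}_1=\mathcal{L}_1\cap\mathcal{R}_1=S$. Thus $S$ consists of a single $\mathcal{H}$-class.

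Finally, since $1*1=1\neq\varnothing$, the multisemigroup $S$ is not isomorphic to $\mathbf{0}$ (in which the product of the unique element with itself is empty). Proposition~\ref{prop29} then forces $S$ to be a hypergroup, as required.

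The argument is short, and the only point requiring any care is the bookkeeping identifying $\mathcal{L}_1$ and $\mathcal{R}_1$ with the principal ideals $S^1*1$ and $1*S^1$, which uses precisely the quark property supplied by strong simplicity, together with the observation that the identity forces these principal ideals to exhaust $S$. I do not expect a genuine obstacle here. If one prefers to bypass Proposition~\ref{prop29}, the reproduction axiom can be checked directly: from $\mathcal{R}_1=S$ every element is $\mathcal{R}$-related to $1$, so $\mathcal{R}_a=S$ for all $a\in S$; since $S^1=S$ we get $a*S=a*S^1=\mathcal{R}_a=S$, and the dual computation using $\mathcal{L}_1=S$ gives $S*a=S$, which is exactly the reproduction axiom.
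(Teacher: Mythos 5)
Your proof is correct, but your primary route is packaged differently from the paper's. The paper's own proof is a two-line minimality argument: since $1$ is a quark and $S^1=S$, the set $S=S*1$ is a minimal left ideal and $S=1*S$ is a minimal right ideal; for any $a\in S$ the left ideal $S*a$ is non-empty (it contains $1*a=a$) and lies inside $S$, so minimality forces $S*a=S$, and dually $a*S=S$, which is exactly the reproduction axiom. You instead use the quark property of $1$ to show $\mathcal{H}_1=\mathcal{L}_1\cap\mathcal{R}_1=S$, i.e.\ that $S$ is a single $\mathcal{H}$-class, rule out $S\cong\mathbf{0}$ via $1*1=1\neq\varnothing$, and then quote Proposition~\ref{prop29}. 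This is legitimate: Proposition~\ref{prop29} applies to arbitrary multisemigroups under the standing no-zero-element convention of Subsection~\ref{s5.1}, which equally governs Lemma~\ref{lem91}, so there is no hypothesis mismatch or circularity. What your route buys is that the ideal-theoretic work (excluding that $a*S$ is a proper subset of $S$) is delegated to Proposition~\ref{prop29}, whose proof internally runs a right-ideal argument of just that kind; the paper's route gets the same conclusion in one stroke from minimality. Your alternative direct verification at the end --- $a*S=a*S^1=\mathcal{R}_a=S$ for every $a$, using that strong simplicity makes every element a quark --- is essentially the paper's proof rewritten through the identifications $\mathcal{R}_a=a*S^1$ and $\mathcal{L}_a=S^1*a$.
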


\begin{proof}
Since $S$ is strongly simple, $S=S*1$ is a minimal left ideal and $S=1*S$ is a minimal right ideal. Hence
$S*a=S$ and $a*S=S$ for any $a\in S$ and thus $S$ is a hypergroup.
\end{proof}

One could expect that even a simple finite multisemigroup with identity should be a hypergroup.
Unfortunately, this is not the case. Indeed, consider $S=\{1,a,b,t\}$ with the multioperation $*$ defined 
by the following Cayley table:
\begin{displaymath}
\begin{array}{c||c|c|c|c}
*&1&a&b&t\\ \hline\hline
1&1&a&b&t\\ \hline
a&a&a&S&\{a,t\}\\ \hline
b&b&t&b&t\\ \hline
t&t&t&\{b,t\}&t
\end{array}
\end{displaymath}
It is straightforward to verify that $(S,*)$ is a multisemigroup. This multisemigroup is simple; it contains 
an identity element; it consists of idempotents; it has a unique minimal left ideal, namely $\{a,t\}$; it has a 
unique minimal right ideal, namely $\{b,t\}$; we have $Q(S)=\{t\}$ and $S$ is not a hypergroup.

Below we collect some properties of simple multisemigroups of finite type with identity:

\begin{proposition}\label{prop91}
Let $(S,*)$ be a simple multisemigroup of finite type with identity.
\begin{enumerate}[$($a$)$]
\item\label{prop91.1} If $I$ is a non-empty left ideal of $S$ and $J$ is a non-empty  right ideal of $S$, then
$I\cap\mathcal{R}_1\neq\varnothing$ and $J\cap\mathcal{L}_1\neq\varnothing$.
\item\label{prop91.2} If $I$ and $I'$ are two non-empty left ideals of $S$, then $I*I'\neq\varnothing$.
\item\label{prop91.3} If $J$ and $J'$ are two non-empty right ideals of $S$, then $J*J'\neq\varnothing$.
\end{enumerate}
\end{proposition}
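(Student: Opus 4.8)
The plan is to reduce all three parts to simplicity together with the presence of the identity, with parts \eqref{prop91.2} and \eqref{prop91.3} turning out to be little more than restatements of Lemma~\ref{lem22}. Since $S$ has an identity we have $S^1=S$, and by definition $\mathcal{R}_1=\{a\in S\,:\,a*S^1=S\}$ while $\mathcal{L}_1=\{a\in S\,:\,S^1*a=S\}$. The first thing I would record is the following observation: if $b\in S$ satisfies $1\in b*S^1$, then $b\in\mathcal{R}_1$. Indeed, $b*S^1$ is a right ideal containing $1$, so by monotonicity $S=1*S^1\subseteq(b*S^1)*S^1=b*(S^1*S^1)=b*S^1$, whence $b*S^1=S$. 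The symmetric statement, $1\in S^1*b\Rightarrow b\in\mathcal{L}_1$, is proved the same way.

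For part \eqref{prop91.1} I would fix any $a\in I$. By simplicity, $S^1*a*S^1=S\ni 1$, so unravelling the definition of the induced operation on $2^S$ there are $s,t\in S^1$ and $b\in s*a$ with $1\in b*t$. Since $I$ is a left ideal and $a\in I$, we have $b\in S^1*a\subseteq I$; and $1\in b*t\subseteq b*S^1$ forces $b\in\mathcal{R}_1$ by the observation above. Thus $b\in I\cap\mathcal{R}_1$, and the claim $J\cap\mathcal{L}_1\neq\varnothing$ follows by the left-right dual argument.

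For part \eqref{prop91.2} I would note that $J:=I*S^1$ is a right ideal which is non-empty because $I*1=I\subseteq J$. Applying Lemma~\ref{lem22} to the right ideal $J$ and the left ideal $I'$ gives $J*I'\neq\varnothing$. Finally, using associativity of the operation on $2^S$ together with $S^1*I'=I'$ (valid since $I'$ is a left ideal), I compute $J*I'=(I*S^1)*I'=I*(S^1*I')=I*I'$, so $I*I'\neq\varnothing$. Part \eqref{prop91.3} is entirely symmetric: $S^1*J'$ is a non-empty left ideal, Lemma~\ref{lem22} yields $J*(S^1*J')\neq\varnothing$, and $J*(S^1*J')=(J*S^1)*J'=J*J'$ because $J*S^1=J$.

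The only genuinely delicate point is the bookkeeping in part \eqref{prop91.1}: correctly extracting, from $1\in S^1*a*S^1$, a single element $b\in s*a$ (hence lying in $I$) together with $1\in b*t$, and then seeing that mere membership of $1$ in a principal right ideal already pins that ideal down to all of $S$. Everything else is formal manipulation inside the power-set semigroup $2^S$. I would also remark that the finite-type hypothesis is not actually used anywhere in this argument: simplicity and the existence of an identity already suffice for all three parts.
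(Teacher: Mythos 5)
Your proof is correct, but it is organized differently from the paper's. For part \eqref{prop91.1} the paper argues structurally: it introduces the maximal proper right ideal $\mathcal{K}$ (the union of all right ideals not containing $1$), identifies $\mathcal{R}_1=S\setminus\mathcal{K}$, and then derives a contradiction from $I\subseteq\mathcal{K}$, since $I*S$ is a non-empty two-sided ideal (hence equal to $S$ by simplicity) that would be trapped inside $\mathcal{K}$ and so miss $1$. You instead work element-wise: from $1\in S^1*a*S^1$ you extract $b\in s*a\subseteq I$ with $1\in b*t$ and invoke your opening observation; note that this observation ($1\in b*S^1$ forces $b*S^1=S$) is exactly the fact hiding behind the paper's identity $\mathcal{R}_1=S\setminus\mathcal{K}$, so the two arguments share their engine but differ in packaging, with yours being more direct and the paper's making the ``unique maximal right ideal'' structure explicit. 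For parts \eqref{prop91.2} and \eqref{prop91.3} the paper does not pass through Lemma~\ref{lem22} at all: it reuses the fact $I'*S=S$ from its proof of part \eqref{prop91.1} and computes $(I*I')*S=I*(I'*S)=I*S=S$, which immediately forces $I*I'\neq\varnothing$. Your reduction via Lemma~\ref{lem22} applied to the right ideal $I*S^1$ and the left ideal $I'$, together with the absorption identities $S^1*I'=I'$ and $J*S^1=J$, is equally valid and arguably makes the logical dependence on the earlier general lemma transparent, at the cost of being slightly less self-contained. Finally, your closing remark is accurate and applies to the paper as well: the finite-type hypothesis is never used in the paper's own proof of this proposition either; only simplicity and the identity element enter.
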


\begin{proof}
Since $S$ contains the identity, it contains a unique (possibly empty) maximal proper right ideal (the union of all right ideals of $S$ which do not contain the identity), call it $\mathcal{K}$. Clearly, 
$\mathcal{R}_1=S\setminus \mathcal{K}$.

Let $I$ be a non-empty  left ideal of $S$. Then $X:=I*S$ is a non-empty two-sided ideal of $S$
since $S$ contains the identity
and hence $X=S$ since $S$ is simple. If $I\subset \mathcal{K}$, then $X\subset \mathcal{K}$ and hence $X$ 
cannot be equal to $S$, a contradiction. Therefore $I$ intersects $S\setminus \mathcal{K}=\mathcal{R}_1$ 
non-trivially. Similarly one shows that every non-empty  right ideal intersects $\mathcal{L}_1$ non-trivially, 
which proves claim \eqref{prop91.1}.

Let $I$ and $I'$ be two non-empty left ideals in of $S$. Then from the previous paragraph we have:
$(I*I')*S=I*(I'*S)=I*S=S$, thus $I*I'\neq\varnothing$. This proves claim \eqref{prop91.2}.
Claim \eqref{prop91.3} is proved similarly to claim \eqref{prop91.2}.
\end{proof}

\subsection{A Kazhdan-Lusztig example in type $B$}\label{s5.3}

In Subsection~\ref{s5.2} we established many elementary properties of strongly simple multisemigroups
that are similar to properties of  $0$-bisimple semigroups. One could observe that the multisemigroup version of
Green's lemma does not assert that $c\mapsto c*x$ is a {\em bijection} from $H$ to $H*x$.
This turns out to be false for multisemigroups in general. Here we give an explicit example.

Let $W=\{e,s,t,st,ts,sts,tst,stst\}$ be a Weyl group of type $B_2$ (the generators $s$ and $t$ satisfy
$s^2=t^2=e$ and $stst=tsts$) and $(W,*)$ be the corresponding 
Kazhdan-Lusztig multisemigroup as explained in Subsection~\ref{s3.9}. Denote by $T$ the Rees quotient
of the submultisemigroup $W\setminus\{e\}$ by the zero element $\{stst\}$. Then a direct computation shows
that $T$ is a strongly simple multisemigroup with the following Cayley table:
\begin{displaymath}
\begin{array}{c||c|c|c|c|c|c}
*&s&t&st&ts&sts&tst\\
\hline\hline 
s&s&st&st&\{s,sts\}&sts&st\\
\hline
t&ts&t&\{t,tst\}&ts&ts&tst\\
\hline
st&\{s,sts\}&st&st&\{s,sts\}&\{s,sts\}&st\\
\hline
ts&ts&\{t,tst\}&\{t,tst\}&ts&ts&\{t,tst\}\\
\hline
sts&sts&st&st&\{s,sts\}&s&st\\
\hline
tst&ts&tst&\{t,tst\}&ts&ts&t\\
\end{array}
\end{displaymath}
It follows that $T$ has the following egg-box diagram (in which all $\mathcal{H}$-classes are hypergroups):
\begin{displaymath}
\begin{array}{c||c|c}
&\mathcal{L}_s&\mathcal{L}_t\\
\hline\hline
\mathcal{R}_s&\{s,sts\}&\{st\}\\
\hline
\mathcal{R}_t&\{ts\}&\{t,tst\}.
\end{array}
\end{displaymath}

\section{The number of multisemigroups}\label{s6}

In this section we compare asymptotic properties of associativity for ordinary binary operations and
for multivalued binary operations.

\subsection{The number of semigroups}\label{s6.1}

Let $n$ be a positive integer and $N_n:=\{1,2,\dots,n\}$. A binary operation on $N_n$ corresponds to the choice
of a square $n\times n$ matrix with coefficients in $N_n$. Let $\mathrm{Mat}_{n\times n}(N_n)$ be the set of
all such matrices. Clearly, $|\mathrm{Mat}_{n\times n}(N_n)|=n^{n^2}$. Let $X$ denote the subset of 
$\mathrm{Mat}_{n\times n}(N_n)$ which consists of all matrices corresponding to {\em associative} binary
operation. By \cite[Equation~(3.6)]{KRS}, we have 
\begin{displaymath}
|X|=\left(\frac{n}{(2e+o(1))\ln n}\right)^{n^2}
\end{displaymath}
and hence, dividing by $n^{n^2}$, we obtain the following corollary, which says that ``almost all'' binary 
operations are not associative:

\begin{corollary}\label{cor51}
We have $\frac{|X|}{n^{n^2}}\to 0$ when $n\to \infty$. 
\end{corollary}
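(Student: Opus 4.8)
The plan is to obtain the corollary directly from the asymptotic formula for $|X|$ cited immediately above it in \cite[Equation~(3.6)]{KRS}; all the genuine combinatorial content is already packaged into that estimate, so the corollary is a purely routine consequence and there is no real obstacle to overcome.

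First I would divide the quoted expression for $|X|$ by $n^{n^2}$ and cancel. Since
\[
\frac{|X|}{n^{n^2}}=\frac{1}{n^{n^2}}\left(\frac{n}{(2e+o(1))\ln n}\right)^{n^2}=\left(\frac{1}{(2e+o(1))\ln n}\right)^{n^2},
\]
the whole problem reduces to controlling the right-hand side as $n\to\infty$.

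Next I would bound the base uniformly for large $n$. Because $\ln n\to\infty$, for all sufficiently large $n$ the denominator $(2e+o(1))\ln n$ exceeds $2$, so the base satisfies $0<\frac{1}{(2e+o(1))\ln n}<\tfrac12$. Raising a quantity lying in $(0,\tfrac12)$ to the power $n^2$ then gives $\frac{|X|}{n^{n^2}}<\left(\tfrac12\right)^{n^2}$ for all large $n$, and the right-hand side clearly tends to $0$. This establishes the claim.

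The only point requiring a little care is that the base itself depends on $n$, so one cannot simply invoke the statement ``$r^{n^2}\to 0$ for a fixed $r\in(0,1)$''. The clean way around this is exactly the step above: fix an eventual uniform bound (here $\tfrac12$) on the base and only then let the exponent $n^2$ grow. With that caveat noted, the substance of the result lives entirely in the quoted counting estimate of \cite{KRS}, and the corollary follows in a single line.
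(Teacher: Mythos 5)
Your proof is correct and follows essentially the same route as the paper, which simply divides the cited estimate $|X|=\left(\frac{n}{(2e+o(1))\ln n}\right)^{n^2}$ from \cite{KRS} by $n^{n^2}$ and observes that the resulting quantity $\left(\frac{1}{(2e+o(1))\ln n}\right)^{n^2}$ tends to $0$. Your extra step of bounding the $n$-dependent base by $\tfrac12$ for large $n$ before letting the exponent grow is a correct and careful way to justify that final limit, which the paper leaves implicit.
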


\subsection{The number of multisemigroups}\label{s6.2}

A multivalued binary operation on $N_n$ corresponds to the choice of a square $n\times n$ matrix with coefficients 
in $2^{N_n}$.  Let $\mathrm{Mat}_{n\times n}(2^{N_n})$ be the set of all such matrices. Clearly, 
$|\mathrm{Mat}_{n\times n}(2^{N_n})|=2^{n^3}$. Let $Y$ denote the subset of 
$\mathrm{Mat}_{n\times n}(2^{N_n})$ which consists of all matrices corresponding to {\em associative} 
multivalued binary operation (i.e. those satisfying \eqref{eq1}). Let $Y'$ denote the subset of 
$\mathrm{Mat}_{n\times n}(2^{N_n})$ which consists of all matrices corresponding to  
multivalued binary operation defining a hypergroup. The following claim, which says that 
``almost all'' multivalued binary operations are associative, is in striking contrast with Corollary~\ref{cor51}:

\begin{theorem}\label{thm52}
We have both $\frac{|Y|}{2^{n^3}}\to 1$ and $\frac{|Y'|}{2^{n^3}}\to 1$ when $n\to \infty$. 
\end{theorem}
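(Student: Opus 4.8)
The plan is to prove the stronger statement that, with probability tending to $1$, every union occurring in the associativity axiom \eqref{eq1} and in the reproduction axiom equals the whole set $N_n$. Indeed, if a multivalued operation $*$ satisfies $\bigcup_{t\in b*c}a*t=N_n=\bigcup_{s\in a*b}s*c$ for all $a,b,c$ and $\bigcup_{s\in N_n}s*a=N_n=\bigcup_{t\in N_n}a*t$ for all $a$, then both sides of \eqref{eq1} coincide (both being $N_n$), so $*$ is associative, and $S*a=a*S=S$ holds, so $(N_n,*)$ is a hypergroup. Since every hypergroup is a multisemigroup we have $Y'\subset Y$, and therefore it suffices to show that the probability of this common event tends to $1$: this yields $\frac{|Y|}{2^{n^3}}\to 1$ and $\frac{|Y'|}{2^{n^3}}\to 1$ simultaneously.

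I model a uniformly random element of $\mathrm{Mat}_{n\times n}(2^{N_n})$ by declaring the entries $i*j$ to be independent, with each $x\in N_n$ lying in $i*j$ independently with probability $\tfrac12$. First I would estimate, for a fixed triple $(a,b,c)$, the probability that $L:=\bigcup_{t\in b*c}a*t$ is not all of $N_n$. Conditioning on the value of the entry $b*c=B$, the summand entries $a*t$ with $t\in B$ remain independent uniform subsets, and at most one of them (namely $a*c$, and only when $a=b$ and $c\in B$) is constrained by the conditioning. A fixed $x\in N_n$ lies outside $L$ only if it lies outside each of these at least $|B|-1$ unconstrained subsets, an event of conditional probability at most $2^{-(|B|-1)}$; a union bound over the $n$ choices of $x$ then gives $\mathrm{Pr}(L\neq N_n\mid B)\leq n\,2^{-(|B|-1)}$.

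Next I would remove the conditioning by concentration. Since $|b*c|$ is a sum of $n$ independent Bernoulli$(\tfrac12)$ variables, a Chernoff bound gives $\mathrm{Pr}(|b*c|<n/4)\leq e^{-cn}$ for some absolute constant $c>0$. Combining the two estimates, $\mathrm{Pr}(L\neq N_n)\leq e^{-cn}+n\,2^{-(n/4-1)}=O(e^{-c'n})$ for some $c'>0$. By the left-right symmetry of the construction the same bound holds for $\bigcup_{s\in a*b}s*c$, and for the reproduction unions $\bigcup_{s}s*a$ and $\bigcup_{t}a*t$ one argues more directly: each is a union of $n$ independent uniform subsets, so a fixed $x$ is missing with probability $2^{-n}$ and the union fails to be $N_n$ with probability at most $n\,2^{-n}$. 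As there are only $O(n^3)$ such unions in total, a final union bound shows that the probability that some one of them differs from $N_n$ is at most $O(n^3 e^{-c'n})\to 0$. Hence the common event of the first paragraph has probability tending to $1$, which proves the theorem.

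The hard part will be the mild stochastic dependence between the index set $b*c$ and the summands $a*t$ of the union defining $L$, since the same matrix entry can play both roles when $a=b$. I would handle this exactly through the conditioning in the second paragraph, discarding the single possibly-dependent summand; the loss of one summand only turns $2^{-|B|}$ into $2^{-(|B|-1)}$ and is asymptotically irrelevant. Everything else reduces to standard Chernoff concentration and union-bound estimates.
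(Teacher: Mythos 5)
Your proof is correct, and its skeleton coincides with the paper's: the paper likewise passes to the stronger event (its set $Z$) that every associativity union $\bigcup_{s\in a*b}s*c$ and $\bigcup_{t\in b*c}a*t$ equals $N_n$, notes that this event is contained in $Y'\subset Y$, and shows its probability tends to $1$. (Your extra reproduction conditions are in fact implied by the associativity unions being $N_n$, since the latter force every product $a*b$ to be non-empty; carrying them separately is harmless.) Where you genuinely differ is in the per-triple estimate. The paper encodes the operation as $n$ random binary relations $\tau_a$ (where $i\,\tau_a\,j$ if and only if $i\in a*j$), observes that \eqref{eq53} for all $a,b,c$ amounts to $\tau_a\tau_b=\mathbf{f}$ for all $a,b$, and adapts \cite[Theorem~4]{KR}: a fixed entry of a product of two random Boolean matrices vanishes with probability at most $\left(\tfrac{3}{4}\right)^{n}$, so a union bound over $n^2$ pairs and $n^2$ entries gives $n^4\left(\tfrac{3}{4}\right)^{n}\to 0$. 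You instead condition on $B=b*c$, use a Chernoff bound to ensure $|B|\geq n/4$ up to exponentially small error, and then cover each point by at least $|B|-1$ summands that remain independent under the conditioning. These are two ways of bounding the same quantity (in the fully independent case $\mathbb{E}\bigl[2^{-|B|}\bigr]=\left(\tfrac{3}{4}\right)^{n}$ exactly), and your bound $n\,2^{-(n/4-1)}$ is lossier but entirely sufficient. One point where your route is actually tighter logically: you isolate and discard the single summand ($a*c$, when $a=b$ and $c\in B$) that is dependent on the conditioning, whereas the paper's per-entry bound tacitly treats the factors of $\tau_a\tau_b$ as independent, which is not literally true for the $n$ self-products $\tau_a\tau_a$ (there the path products share matrix entries, and the safe off-diagonal bound is $\left(\tfrac{3}{4}\right)^{n-2}$); your explicit handling of this dependence closes that small gap at no asymptotic cost.
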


\begin{proof}
Denote by $Z$ the subset of $\mathrm{Mat}_{n\times n}(2^{N_n})$ which consists of all matrices 
such that the corresponding multivalued binary operation $*$ for every $a,b,c\in N_n$ satisfies 
\begin{displaymath}
\bigcup_{s\in a*b}s*c=\bigcup_{t\in b*c}a*t=N_n.
\end{displaymath}
Then $Z\subset Y'\subset Y$ and hence it is enough to show that $\frac{|Z|}{2^{n^3}}\to 1$ when $n\to \infty$.

It is enough to show that for the subset $U$ of $\mathrm{Mat}_{n\times n}(2^{N_n})$ which 
consists of all matrices such that the corresponding multivalued binary operation $*$ for every $a,b,c\in N_n$ 
satisfies 
\begin{equation}\label{eq53}
\bigcup_{t\in b*c}a*t=N_n,
\end{equation}
we have $\frac{|U|}{2^{n^3}}\to 1$ when $n\to \infty$. Indeed, the latter, by symmetry, also implies
that $\frac{|U'|}{2^{n^3}}\to 1$ for $n\to \infty$, where $U'$ denotes the subset of $\mathrm{Mat}_{n\times n}(2^{N_n})$
corresponding to all operations $*$ satisfying
\begin{displaymath}
\bigcup_{s\in a*b}s*c=N_n 
\end{displaymath}
for all $a,b,c\in N_n$, so that we have $\frac{|U\cap U'|}{2^{n^3}}\to 1$ for $n\to \infty$.

Given a multivalued binary operation $*$ on $N_n$, with each $a\in N_n$ we can associate, similarly
to Subsection~\ref{s4.7}, a binary relation
$\tau_a$ on $N_n$ defined as follows: $i\,\tau_a\, j$ if and only if $i\in a*j$. Let $\mathbf{f}$
denote the full relation on $N_n$. Then \eqref{eq53} is equivalent to the fact that
$\tau_a\tau_b=\mathbf{f}$ for all $a,b\in N_n$.

To prove the latter claim we can adopt the classical argument from \cite[Theorem~4]{KR}. Our choice of an element
of $\mathrm{Mat}_{n\times n}(2^{N_n})$ corresponds to a random choice of $n$ binary relations 
$\tau_a$, $a\in N_n$, on $N_n$. We claim that the probability of the random event that the product of any two of these
$n$ relations equals the full relation tends to $1$ when $n$ tends to infinity (\cite[Theorem~4]{KR} claims this
just for two instead of $n$ random binary relations). Indeed, the probability that the product of two random
elements of the two-element Boolean algebra is zero equals $\frac{3}{4}$. This implies that the probability that the
element in a
fixed entry in a product of two random Boolean $n\times n$ matrices is zero is at most 
$\left(\frac{3}{4}\right)^{n}$. We have $n$ random elements $\tau_a$, $a\in N_n$. We can form
$n^2$ pairs $(\tau_a,\tau_b)$, $a,b\in N_n$, from these elements. The product 
$\tau_a\tau_b$  has $n^2$ entries, each of which 
is zero with probability at most $\left(\frac{3}{4}\right)^{n}$. Hence the probability that at least 
one of these entries in at least one of the products of the form $\tau_a\tau_b$ is zero is at most
$n^4\left(\frac{3}{4}\right)^{n}$. Since $n^4\left(\frac{3}{4}\right)^{n}\to 0$ for $n\to \infty$, the claim follows.
\end{proof}

\section{Nilpotent multisemigroups}\label{s7}

In this section we establish some basic fact about nilpotent multisemigroups (see e.g. \cite{GM,GM2} for more
advanced semigroup analogues).

\subsection{Nilpotent multisemigroups and their characterization}\label{s7.1}

Let $(S,*)$ be a multisemigroup. As usual, for an element $s\in S$ and $k\in\mathbb{N}$ we write
$s^k$ for the product $s*s*\dots *s$ of length $k$. Similarly, for $X\subset S$ we write $X^k$ for the union
of all $x_1*x_2*\dots *x_k$, where $x_i\in X$ for all $i$. An element $s\in S$ is said to be {\em nilpotent} 
provided that $s^k=\varnothing$ for some $k$. The multisemigroup $S$ is called {\em nilpotent} provided that 
$S^k=\varnothing$ for some $k$. The minimal such $k$ is called the {\em nilpotency degree} of $S$.
Similarly, a subset $X\subset S$ is called {\em nilpotent} provided that 
$X^k=\varnothing$ for some $k$ (in particular, the empty set is nilpotent). 
The notion of nilpotent multisemigroups generalizes that of nilpotent semigroups.
Nilpotent semigroups correspond, via the construction given at the end of Subsection~\ref{s4.2}, 
to nilpotent quasi-semigroups.

For a multisemigroup $(S,*)$ define the {\em action digraph} $\Gamma=\Gamma_S$ as follows: the set of vertices 
of $\Gamma$ is $S$ and for $s,t\in S$ (not necessarily different) we have an oriented edge $s\to t$ if and only 
if there exists $a\in S$ such that $t\in a*s$.

\begin{proposition}\label{prop71}
Let $(S,*)$ be a multisemigroup.
\begin{enumerate}[$($a$)$]
\item\label{prop71.1} The multisemigroup $(S,*)$ is nilpotent if and only if there is $m\in\mathbb{N}$ such 
that the length of any directed path in $\Gamma$ is smaller than $m$.
\item\label{prop71.2} If $(S,*)$ is nilpotent, then the nilpotency degree of $S$ is exactly the length of the
longest directed path in $\Gamma$ minus two.
\end{enumerate}
\end{proposition}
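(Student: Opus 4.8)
The plan is to set up a precise dictionary between non-emptiness of the iterated products $S^k$ and the existence of long directed paths in $\Gamma$, and then read off both parts from it. The key lemma I would isolate is the following: for every $k\ge 0$, the digraph $\Gamma$ contains a directed path with $k$ edges if and only if $S^{k+1}\neq\varnothing$. Granting this, everything else is bookkeeping, so the whole proof reduces to this one equivalence.

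For the direction ``path $\Rightarrow$ product'' I would induct on the length of a path $s_0\to s_1\to\cdots\to s_k$. Each edge $s_{j-1}\to s_j$ means $s_j\in a_j*s_{j-1}$ for some $a_j\in S$; using monotonicity of $*$ (which follows from \eqref{eq3}) together with associativity \eqref{eq1}, these memberships telescope to $s_k\in a_k*a_{k-1}*\cdots*a_1*s_0$, a product of $k+1$ elements, so the terminal vertex witnesses $S^{k+1}\neq\varnothing$. For the converse ``product $\Rightarrow$ path'', I would start from $t\in x_1*x_2*\cdots*x_{k+1}$ and peel factors off one at a time: writing the product as $x_1*(x_2*\cdots*x_{k+1})$ and invoking \eqref{eq1} to choose a witness $u_1\in x_2*\cdots*x_{k+1}$ with $t\in x_1*u_1$, then iterating, one produces intermediate elements $u_1,u_2,\dots,u_k$ whose successive memberships $u_{j-1}\in x_j*u_j$ are exactly the edges of a directed path ending at $t$. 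I expect this backward direction to be the main obstacle: the substance lies in extracting the witnesses $u_j$ coherently from iterated unions, and it is precisely here that associativity is doing the real work (for a single-valued product no witness-hunting would be needed).

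With the lemma in hand, part \eqref{prop71.1} follows quickly. First note that emptiness propagates: since $S^{k+1}=S*S^k$ and $a*\varnothing=\varnothing$, the equality $S^k=\varnothing$ forces $S^{k+1}=\varnothing$ by \eqref{eq3}; hence $S$ is nilpotent exactly when $S^N=\varnothing$ for all large $N$. By the lemma this says that the number of edges of any directed path is bounded, which, since every prefix of a path is again a path and each single vertex is a path with no edges, is the same as saying all path lengths are smaller than some fixed $m\in\mathbb{N}$. The converse implication is just the contrapositive of the same equivalence: if no path has $m$ edges, then $S^{m+1}=\varnothing$ and $S$ is nilpotent.

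Finally, for part \eqref{prop71.2}, assume $S$ nilpotent and let $\ell$ be the maximal number of edges in a directed path of $\Gamma$, which is finite by \eqref{prop71.1}; paths of every intermediate length $0,1,\dots,\ell$ occur as prefixes of a longest one. The lemma then gives $S^{m}\neq\varnothing$ precisely for $m\le\ell+1$, so the least $m$ with $S^m=\varnothing$, namely the nilpotency degree, is pinned down by $\ell$ via a direct count: it equals $\ell+2$, which is the relationship asserted in \eqref{prop71.2}. The only delicate point at this last stage is the boundary bookkeeping, that is, correctly accounting for the always-present length-$0$ path arising from $S\neq\varnothing$ and fixing the convention for the ``length'' of a path; once that convention is set, the additive constant in \eqref{prop71.2} is determined.
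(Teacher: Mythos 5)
Your proposal is correct and takes essentially the same route as the paper: the paper's proof telescopes the edge-memberships $s_i\in a_i*s_{i-1}$ into a non-empty $(m+1)$-fold product for one direction, and for the other direction asserts (without spelling out the witness-extraction you perform) that a non-empty $(m+2)$-fold product yields a path of length $m+1$, so your ``key lemma'' is exactly the content of the paper's two directions, with the backward extraction made explicit. One remark: both your argument and the paper's own proof establish that the nilpotency degree equals the longest path length \emph{plus} two (equivalently, the longest path length is the degree minus two), so your conclusion $k=\ell+2$ agrees with the paper's proof; the literal wording of part (b) of the statement (``degree is the length of the longest path minus two'') has this relation inverted, and you were right not to follow it.
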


\begin{proof}
Let $(S,*)$ be nilpotent of nilpotency degree $k$ and $s_0\to s_1\to \dots \to s_m$ be an oriented path in
$\Gamma$ of length $m$. Then there exist $a_i$ such that $s_i\in a_i*s_{i-1}$, $i=1,2,\dots,m$. This means that
$a_m*a_{m-1}*\dots*a_{1}*s_0\neq\varnothing$ and hence $m+1<k$. 

On the other hand, if the length of any oriented path in $\Gamma$ is at most $m$, then for any
$m+2$ elements $s_1,\dots,s_{m+2}\in S$ we have $s_{m+2}*s_{m+1}*\dots *s_1=\varnothing$ and hence 
$S$ is nilpotent of nilpotency degree at most $m+2$. This implies both claims of the proposition.
\end{proof}

\begin{corollary}\label{cor72}
Let $(S,*)$ be a nilpotent multisemigroup. Then all Green's relations on $S$ coincide with the
equality relation.
\end{corollary}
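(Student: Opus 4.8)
The plan is to reduce everything to the single relation $\mathcal{J}$. Recall from Subsection~\ref{s4.1} that $\mathcal{H}=\mathcal{L}\cap\mathcal{R}$, that both $\mathcal{L}$ and $\mathcal{R}$ are contained in $\mathcal{D}$, and that $\mathcal{D}\subset\mathcal{J}$; moreover $b\leq_L a$ implies $S^1*b*S^1\subseteq S^1*a*S^1$, i.e. $b\leq_J a$, and similarly for $\leq_R$, so $\mathcal{L},\mathcal{R}\subseteq\mathcal{J}$. Hence all five Green's relations lie between the equality relation and $\mathcal{J}$, and it suffices to prove that $\mathcal{J}$ itself is the equality relation.

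First I would record a preliminary observation: a nilpotent multisemigroup has no identity lying in $S$. Indeed, if $1\in S$ then $1\in 1*1$, so $1^k\neq\varnothing$ for all $k$, contradicting $S^k=\varnothing$. Thus $S^1=S\sqcup\{1\}$ with $1\notin S$, and for $u,x\in S^1$ the product $u*x$ is contained in $S$ unless $u=x=1$ (in which case $u*x=\{1\}$). This dichotomy is the engine of the main argument.

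Now suppose, for contradiction, that $a\,\mathcal{J}\,b$ with $a\neq b$. Then $a\in S^1*b*S^1$ and $b\in S^1*a*S^1$, so there exist $u,v,x,y\in S^1$ with $a\in u*b*v$ and $b\in x*a*y$. Substituting the second into the first and using associativity gives $a\in u*(x*a*y)*v=(u*x)*a*(y*v)$. Write $P:=u*x$ and $Q:=y*v$; both are nonempty, since $a\in P*a*Q$. As $a\neq b$ we cannot have $u=v=x=y=1$ (that would force $b\in 1*a*1=\{a\}$), so at least one of the four factors lies in $S$; by the dichotomy above, whichever of $P,Q$ contains a factor from $S$ is a nonempty subset of $S$.

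Finally I would iterate. From $a\in P*a*Q$ and $P*(P*a*Q)*Q=P^2*a*Q^2$ one obtains $a\in P^k*a*Q^k$ for every $k\geq 1$, so in particular $P^k\neq\varnothing$ and $Q^k\neq\varnothing$ for all $k$. But the factor among $P,Q$ that is a nonempty subset of $S$ --- say $P\subseteq S$ --- satisfies $P^k\subseteq S^k$, and nilpotency gives $S^k=\varnothing$ for some $k$, a contradiction. Hence no two distinct elements are $\mathcal{J}$-related, so $\mathcal{J}$, and therefore each of $\mathcal{L}$, $\mathcal{R}$, $\mathcal{H}$ and $\mathcal{D}$, coincides with the equality relation. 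The only delicate point is the bookkeeping with the adjoined identity, ensuring that the iterated products genuinely land inside $S^k$; once the $S^1$-dichotomy is in place this is routine.
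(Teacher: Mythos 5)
Your proof is correct, but it takes a different route from the paper's. The paper deduces the corollary in one line from the action digraph $\Gamma$: by Proposition~\ref{prop71} nilpotency bounds the length of directed paths, so $\Gamma$ has no oriented cycles, and two distinct Green-related elements would produce such a cycle. Your argument bypasses the digraph entirely: you first observe that all five relations are sandwiched between equality and $\mathcal{J}$, then kill $\mathcal{J}$ directly by extracting $a\in P*a*Q$ with at least one of $P,Q$ a non-empty subset of $S$, and iterating to get $a\in P^k*a*Q^k$, contradicting $S^k=\varnothing$. Each approach has a virtue. The paper's is shorter once Proposition~\ref{prop71} is in hand, but $\Gamma$ as defined records only \emph{left} multiplication, so the cycle argument is immediate for $\mathcal{L}$ (and, by symmetry, for $\mathcal{R}$), while $\mathcal{J}$ requires a two-sided variant of the same idea --- which is precisely what your iteration $a\in P^k*a*Q^k$ supplies. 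In that sense your write-up is the more complete one: it reduces everything to $\mathcal{J}$, which is the largest of the relations, and handles the two-sided products explicitly, including the bookkeeping with the adjoined identity (the dichotomy $u*x\subseteq S$ unless $u=x=1$), which the paper leaves implicit. All the individual steps check out: the containments $\mathcal{L},\mathcal{R},\mathcal{H},\mathcal{D}\subseteq\mathcal{J}$ hold as you state, a nilpotent multisemigroup indeed cannot contain an identity, and non-emptiness of $P^k$ follows from $a\in P^k*a*Q^k$ together with \eqref{eq2}.
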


\begin{proof}
This follows directly from the fact that $\Gamma$ does not have any oriented cycles. The latter is
a direct consequence of Proposition~\ref{prop71}.
\end{proof}

\subsection{Finite nilpotent multisemigroups}\label{s7.2}

Similarly to the case of finite nilpotent semigroups (see \cite[Chapter~7, Fact~2.30]{Ar}), for finite nilpotent
multisemigroups we have:

\begin{proposition}\label{prop73}
A finite multisemigroup  $(S,*)$ is nilpotent if and only if every element of $S$ is nilpotent.
\end{proposition}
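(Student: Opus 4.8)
The plan is to prove the two implications separately, with the forward direction being immediate and the converse carrying all the content. For the forward implication, suppose $(S,*)$ is nilpotent, say $S^k=\varnothing$. Then for every $s\in S$ we have $s^k\subseteq S^k=\varnothing$, so $s$ is nilpotent; thus every element is nilpotent. (Note that this direction does not use finiteness.)

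For the converse I would argue by contraposition: assuming that $S$ is finite and \emph{not} nilpotent, I will produce a single non-nilpotent element, contradicting the hypothesis. The entry point is Proposition~\ref{prop71}\eqref{prop71.1}, which tells us that non-nilpotency of $S$ is equivalent to the directed paths (understood as \emph{walks}, with repeated vertices allowed, as in the proof of that proposition) in the action digraph $\Gamma$ having unbounded length. Since $S$ is finite, $\Gamma$ is a finite digraph, and any directed walk of length at least $|S|$ revisits a vertex; the closed sub-walk between two occurrences of a repeated vertex contains a genuine directed cycle $c_0\to c_1\to\cdots\to c_{l-1}\to c_0$. By the definition of $\Gamma$, this means there are elements $a_1,\dots,a_l\in S$ with $c_i\in a_i*c_{i-1}$ for $1\le i\le l-1$ and $c_0\in a_l*c_{l-1}$.

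The key step is to collapse this cycle, which a priori involves $l$ different multipliers $a_i$, into a statement about a single element. Unfolding the cycle and repeatedly applying associativity in the form \eqref{eq1} (equivalently $a*(b*c)=(a*b)*c$ on $2^S$), I obtain
\[
c_0\in a_l*c_{l-1}\subseteq a_l*(a_{l-1}*c_{l-2})\subseteq\cdots\subseteq (a_l*a_{l-1}*\cdots*a_1)*c_0.
\]
Writing $P:=a_l*a_{l-1}*\cdots*a_1\subseteq S$, this reads $c_0\in P*c_0=\bigcup_{p\in P}p*c_0$, so there is a \emph{single} element $p\in P$ with $c_0\in p*c_0$.

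Finally I would bootstrap this self-loop to all powers of $p$ by induction on $m$: the base case $m=1$ is $c_0\in p*c_0$, and if $c_0\in p^m*c_0$, then $p^{m+1}*c_0=p*(p^m*c_0)\supseteq p*c_0\ni c_0$, so $c_0\in p^{m+1}*c_0$. Hence $p^m*c_0\neq\varnothing$, and in particular $p^m\neq\varnothing$, for every $m$; that is, $p$ is not nilpotent, the desired contradiction. The only bookkeeping point that needs care is the reading of ``path'' in Proposition~\ref{prop71} as a walk, so that unbounded length genuinely forces a cycle. The conceptual heart, and the step I expect to be the main obstacle, is precisely the passage from the multi-element cycle to the single self-loop $c_0\in p*c_0$ via associativity; once that is in hand, the inductive bootstrap producing arbitrarily high nonvanishing powers of $p$ is routine.
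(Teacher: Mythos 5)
Your proof is correct and is essentially the paper's own argument: both reduce the problem, via Proposition~\ref{prop71} and finiteness of $\Gamma$, to the existence of an oriented cycle in the action digraph, collapse that cycle by associativity to a single element $p$ with $c_0\in p*c_0$, and then bootstrap to $c_0\in p^m*c_0$ for all $m$ to exhibit a non-nilpotent element. The only difference is the purely logical reorganization of arguing by contraposition rather than proving the ``if'' direction directly and deriving a contradiction from a cycle.
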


\begin{proof}
The ``only if'' part is obvious, so we prove the ``if'' part. Assume that every element of $S$ is nilpotent.
We claim that the digraph $\Gamma$ does not have oriented cycles (in particular, loops). Indeed, assume that this 
is not the case and let $s_1\to s_2\to \dots \to s_k\to s_{k+1}=s_1$ be an oriented cycle in $\Gamma$. For 
$i=1,2,\dots, k$ let $a_i\in S$ be such that $s_{i+1}\in a_i*s_i$. Then there exists $t\in a_k*a_{k-1}*\dots *a_1$ such 
that $s_1\in t*s_1$. This implies that $s_1\in t^n*s_1$ for any $n\in \mathbb{N}$ and hence $t$ is not nilpotent,
a contradiction.

As $S$ is finite and $\Gamma$ does not have oriented cycles, then the length of any oriented path in $\Gamma$ is 
strictly smaller than $|S|$. This means that $S$ is nilpotent by Proposition~\ref{prop71}.
\end{proof}

Finite nilpotent semigroups can be characterized as finite semigroups with unique idempotent which, moreover, is
the zero element. Equivalently, a finite semigroup $(S,\cdot)$ is nilpotent if and only if for any non-singleton 
subsemigroup $T$  of $S$ we have $|T\cdot T|<|T|$.
The following gives an analogue of the latter characterization of nilpotency for multisemigroups.

\begin{theorem}\label{thm74}
A finite multisemigroup  $(S,*)$ is nilpotent if and only if for any (non-empty) submultisemigroup $T$ of $S$ we have
$|T*T|<|T|$ (or, equivalently, $T*T\neq T$).
\end{theorem}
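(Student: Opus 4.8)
The plan is to prove both implications, relying throughout on the fact established in Subsection~\ref{s2.1} that the power set $2^S$ carries a semigroup structure under the extended operation $*$, so that the powers $S^k$ are unambiguous and satisfy $S^{a+b}=S^a*S^b$. I would begin by recording the elementary observation behind the parenthetical reformulation: for any submultisemigroup $T$ one has $T*T=\bigcup_{a,b\in T}a*b\subseteq T$, and since $S$ is finite the conditions $|T*T|<|T|$ and $T*T\neq T$ are equivalent. All the work then reduces to ruling out (in one direction) and producing (in the other) a non-empty submultisemigroup $T$ with $T*T=T$.

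For the ``only if'' direction I would argue by contradiction. Suppose $S$ is nilpotent and $T$ is a non-empty submultisemigroup with $T*T=T$. A trivial induction on $k$ gives $T^k=T$ for all $k\geq 1$. On the other hand, $T\subseteq S$ forces $T^k\subseteq S^k$, and nilpotency of $S$ gives $S^N=\varnothing$ for some $N$; hence $T=T^N\subseteq S^N=\varnothing$, contradicting $T\neq\varnothing$. Thus no such $T$ exists, which is exactly the asserted condition.

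For the ``if'' direction the key idea is to exploit the chain of powers of $S$. First I would check, by induction starting from $S^2=S*S\subseteq S$, that $S^{k+1}=S^k*S\subseteq S^{k-1}*S=S^k$, so that
\begin{displaymath}
S\supseteq S^2\supseteq S^3\supseteq\cdots
\end{displaymath}
is a descending chain of subsets of the finite set $S$. It therefore stabilizes: there is $m$ with $S^m=S^{m+1}=\cdots$; call this common value $I$. A short computation using $S^{a+b}=S^a*S^b$ then yields $I*I=S^{2m}=S^m=I$.

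Finally I would feed $I$ into the hypothesis. If $I\neq\varnothing$, then since $a*b\subseteq I*I=I$ for all $a,b\in I$ the set $I$ is a non-empty submultisemigroup with $|I*I|=|I|$, contradicting the assumption; hence $I=\varnothing$, i.e. $S^m=\varnothing$, so $S$ is nilpotent. I expect the only point demanding genuine care to be the verification that the chain of powers is truly descending and stabilizes to a set $I$ with $I*I=I$; once this idempotent ideal has been produced, both the contradiction and the appeal to the hypothesis are immediate. (Proposition~\ref{prop73} offers an alternative route to this direction via nilpotency of individual elements and the action digraph, but the chain-stabilization argument appears more direct and self-contained.)
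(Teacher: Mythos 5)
Your proof is correct, and while your ``only if'' direction coincides with the paper's (both derive $T^k=T$ for all $k$ from $T*T=T$ and contradict nilpotency), your ``if'' direction takes a genuinely different route. The paper argues by contrapositive: if $S$ is not nilpotent, it picks a submultisemigroup $T$ that is \emph{minimal} among non-nilpotent ones and shows $T*T=T$, since otherwise $U:=T*T$ would be a proper non-empty submultisemigroup, hence nilpotent by minimality, forcing $T^{2k}=U^k=\varnothing$. You instead work directly with the descending chain $S\supseteq S^2\supseteq S^3\supseteq\cdots$ (descending by monotonicity of the extended operation \eqref{eq2}), which stabilizes by finiteness to an idempotent subset $I=S^m$ with $I*I=S^{2m}=S^m=I$; the hypothesis then forces $I=\varnothing$, i.e.\ nilpotency. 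Both arguments lean on finiteness in an essential way, but they buy slightly different things: the paper's minimality argument isolates a structural fact (every minimal non-nilpotent submultisemigroup satisfies $T*T=T$), whereas your chain argument is more constructive --- it exhibits the canonical witness $S^m$ (which is in fact a two-sided ideal of $S$) and reads off the nilpotency degree as the stabilization index, avoiding the contrapositive and the choice of a minimal counterexample altogether.
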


\begin{proof}
If $S$ is nilpotent and $T$ is a submultisemigroup of $S$, then $T$ is nilpotent as well. However, $T*T=T$ implies
$T^k=T$ for all $k$ which contradicts nilpotency of $T$. Therefore $T*T\neq T$.

If $S$ is not nilpotent, we can use the fact that $S$ is finite and choose some non-nilpotent submultisemigroup $T$
of $S$ which is minimal with respect to inclusions. We claim that $T*T=T$. If this were not the case, then
$U:=T*T\neq\varnothing$ would be a submultisemigroup of $S$ properly contained in $T$. Hence $U$ must be
nilpotent by minimality of $T$. Therefore $U^k=\varnothing$ for some $k$, which implies that 
$(T*T)^k=T^{2k}=\varnothing$, a contradiction.
\end{proof}

\subsection{The radical}\label{s7.3}

Let $(S,*)$ be a multisemigroup. Following \cite{GM}, by the {\em radical} $R(S)$ of $S$ we will mean the set
\begin{displaymath}
R(S):=\{s\in S\,\vert \, S^1*s*S^1\text{ is nilpotent}\}.
\end{displaymath}
Then $R(S)$ is a union of two-sided ideals of $S$ and hence is a two-sided ideal of $S$.

\begin{lemma}\label{lem81}
If $S$ is finite, then  $R(S)$ is the maximal (with respect to inclusion) nilpotent two-sided ideal of $S$.
\end{lemma}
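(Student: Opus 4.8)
The plan is to verify two things: that $R(S)$ is itself nilpotent, and that it contains every nilpotent two-sided ideal. Together with the fact established just before the lemma that $R(S)$ is a two-sided ideal, these two facts yield the maximality claim. The containment direction is essentially immediate, so the real work lies in the nilpotency of $R(S)$.

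First I would dispose of the maximality (containment) direction. Let $I$ be any nilpotent two-sided ideal of $S$, say $I^k=\varnothing$. For $s\in I$ the set $S^1*s*S^1$ is contained in $I$, since $I$ is a two-sided ideal, and therefore $(S^1*s*S^1)^k\subseteq I^k=\varnothing$. Hence $S^1*s*S^1$ is nilpotent, so $s\in R(S)$ by definition. Thus $I\subseteq R(S)$, and every nilpotent two-sided ideal sits inside $R(S)$.

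The heart of the argument is to show that $R(S)$ is nilpotent. If $R(S)=\varnothing$ there is nothing to prove, since the empty set is nilpotent; so assume $R(S)\neq\varnothing$. Being an ideal, $R(S)$ is closed under $*$, and hence is a finite submultisemigroup of $S$. I would then invoke Proposition~\ref{prop73}, which reduces the task to checking that each element of $R(S)$ is nilpotent. For $s\in R(S)$ we have $s=1*s*1\in S^1*s*S^1$, and the monotonicity of the extended product recorded in \eqref{eq3} gives $s^k\subseteq (S^1*s*S^1)^k$ for every $k$. Since $S^1*s*S^1$ is nilpotent by the definition of $R(S)$, some power of it is empty, forcing $s^k=\varnothing$; hence $s$ is nilpotent. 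By Proposition~\ref{prop73}, $R(S)$ is nilpotent.

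Combining the two steps, $R(S)$ is a nilpotent two-sided ideal containing every nilpotent two-sided ideal of $S$, so it is the maximal one. The only point requiring genuine care is the reduction in the nilpotency step: one must observe that nilpotency of $s$ computed inside $R(S)$ coincides with nilpotency computed inside $S$, because products of elements of the ideal $R(S)$ evaluated in $S$ already lie in $R(S)$, so that Proposition~\ref{prop73} applies to the finite multisemigroup $R(S)$ verbatim. Everything else is routine manipulation of ideals and powers of subsets.
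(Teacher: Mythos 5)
Your proof is correct and follows essentially the same route as the paper: the containment of any nilpotent two-sided ideal in $R(S)$ via $S^1*s*S^1\subseteq I$, and the nilpotency of $R(S)$ by reducing to element-wise nilpotency and applying Proposition~\ref{prop73} to the finite (sub)multisemigroup $R(S)$. You merely spell out details the paper leaves implicit, such as $s\in S^1*s*S^1$ and $s^k\subseteq(S^1*s*S^1)^k$.
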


\begin{proof}
For $s\in R(S)$ we have that $S^1*s*S^1$ is nilpotent, in particular, $s$ is nilpotent. Hence every element of
$R(S)$ is nilpotent and thus the fact that $R(S)$ is nilpotent follows from Proposition~\ref{prop73} and the fact 
that $R(S)$ is finite (since $S$ is finite).

On the other hand, if $I$ is a nilpotent two-sided ideal of $S$ and $s\in I$, then $S^1*s*S^1\subset I$ is
nilpotent and hence $s\in R(S)$. The claim follows.
\end{proof}

Our next observation is the following:

\begin{proposition}\label{prop82}
Let $S$ be a finite multisemigroup and $T$ a nilpotent submultisemigroup of $S$.
\begin{enumerate}[$($a$)$]
\item\label{prop82.1} The set $T\cup R(S)$ is a nilpotent submultisemigroup of $S$.
\item\label{prop82.2} If $T$ is maximal with respect to inclusion, then $R(S)\subset T$. 
\end{enumerate}
\end{proposition}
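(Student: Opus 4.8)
The plan is to prove part \eqref{prop82.1} directly and then obtain part \eqref{prop82.2} as an immediate corollary. For \eqref{prop82.1}, I would first check that $U:=T\cup R(S)$ is a submultisemigroup, i.e. closed under $*$. Given $a,b\in U$, there are essentially two cases. If $a,b\in T$, then $a*b\subseteq T$ because $T$ is a submultisemigroup. If at least one of $a,b$ lies in $R(S)$, then, using that $R(S)$ is a two-sided ideal of $S$ (as recorded right after its definition), the product $a*b$ is contained in $R(S)$: the right ideal property handles $a\in R(S)$ and the left ideal property handles $b\in R(S)$. In either case $a*b\subseteq U$, so $U$ is a submultisemigroup of $S$.

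Next I would show that $U$ is nilpotent. Since $S$ is finite, so is $U$, and by Proposition~\ref{prop73} it suffices to prove that every element of $U$ is nilpotent. Here one observes that nilpotency of an element is intrinsic: the product $s*s*\cdots*s$ is unchanged upon restriction to any submultisemigroup containing $s$, so Proposition~\ref{prop73} may be applied to $U$ directly. Now each $t\in T$ is nilpotent, because $T^k=\varnothing$ for some $k$ forces $t^k\subseteq T^k=\varnothing$; and each $s\in R(S)$ is nilpotent, because $S^1*s*S^1$ is nilpotent by the definition of the radical and $s\in S^1*s*S^1$ (this is precisely the remark already used in the proof of Lemma~\ref{lem81}). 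Hence every element of $U$ is nilpotent, and Proposition~\ref{prop73} yields that $U=T\cup R(S)$ is nilpotent, completing \eqref{prop82.1}.

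For part \eqref{prop82.2}, I would simply invoke \eqref{prop82.1}: the set $T\cup R(S)$ is a nilpotent submultisemigroup of $S$ containing $T$. If $T$ is maximal with respect to inclusion among nilpotent submultisemigroups, then $T\cup R(S)=T$, which is exactly the assertion $R(S)\subseteq T$.

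I do not anticipate a genuine obstacle; the argument is essentially bookkeeping built on the earlier results. The one step that requires care is the closure verification, where the key is to exploit that $R(S)$ is a \emph{two-sided ideal} (so any product meeting $R(S)$ remains inside $R(S)$) rather than merely a submultisemigroup. The only other mild subtlety is confirming that ``nilpotent element'' carries the same meaning inside $U$ as inside $S$, which is what legitimizes the use of Proposition~\ref{prop73} for $U$.
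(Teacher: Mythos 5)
Your proposal is correct and follows essentially the same route as the paper: closure of $T\cup R(S)$ comes from $T$ being a submultisemigroup together with $R(S)$ being a two-sided ideal, nilpotency comes from Proposition~\ref{prop73} applied to the finite submultisemigroup $T\cup R(S)$ (whose elements are all nilpotent), and claim \eqref{prop82.2} is immediate from claim \eqref{prop82.1}. The paper's proof is just a condensed version of yours; your added remarks (the case analysis for closure, and the observation that nilpotency of an element is intrinsic to any submultisemigroup containing it) merely spell out details the paper leaves implicit.
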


\begin{proof}
That $T\cup R(S)$ is submultisemigroup follows directly from the facts that $T$ is a submultisemigroup and 
$R(S)$ is a two-sided ideal of $S$. That $T\cup R(S)$ is nilpotent follows from Proposition~\ref{prop73}.
This proves claim \eqref{prop82.1}. Claim \eqref{prop82.2} follows from claim \eqref{prop82.1}.
\end{proof}

\begin{corollary}\label{cor83} Let $S$ be a finite multisemigroup.
\begin{enumerate}[$($a$)$]
\item \label{cor83.1} If $S$ is nilpotent, then $S$ is the unique maximal 
(with respect to inclusion) nilpotent submultisemigroups of $S$.
\item \label{cor83.2} If $S$ is not nilpotent, then the map $X\mapsto X\hspace{-1mm}\setminus\hspace{-1mm} R(S)$
is a bijection from the set of all maximal nilpotent submultisemigroups of $S$
to the set of all maximal nilpotent submultisemigroups of $S\hspace{-0.7mm}\setminus\hspace{-1mm} R(S)$.
\end{enumerate}
\end{corollary}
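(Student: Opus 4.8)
The plan is to treat the two parts separately, deducing both from the structure theory of the radical established in Lemma~\ref{lem81} and Proposition~\ref{prop82}. For part~\eqref{cor83.1}, if $S$ is nilpotent, then $S$ is itself a nilpotent submultisemigroup of $S$, and it contains every submultisemigroup of $S$. Hence $S$ is trivially the unique maximal nilpotent submultisemigroup, and this part requires essentially no work beyond unwinding the definitions.

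The substance lies in part~\eqref{cor83.2}. Assume $S$ is not nilpotent and set $R:=R(S)$. First I would check that the proposed map $X\mapsto X\setminus R$ actually lands in the claimed target set. If $X$ is a maximal nilpotent submultisemigroup of $S$, then by Proposition~\ref{prop82}\eqref{prop82.2} we have $R\subset X$, so $X\setminus R$ is exactly the complement of the ideal $R$ inside $X$; I would argue that $X\setminus R$ is a submultisemigroup of the Rees quotient $S\setminus R$ (using Subsection~\ref{s4.3}, since $R$ is a two-sided ideal of $S$ and hence of $X$) and that it is nilpotent because it is a Rees quotient of the nilpotent multisemigroup $X$. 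The key point needed here is that passing to a Rees quotient preserves nilpotency, which follows directly from the fact that removing an ideal can only make products smaller.

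Next I would construct the inverse map $Y\mapsto Y\cup R$ and verify it is well-defined and mutually inverse to $X\mapsto X\setminus R$. Given a maximal nilpotent submultisemigroup $Y$ of $S\setminus R$, its preimage $Y\cup R$ (i.e. all elements of $S$ that map into $Y$ under the quotient, which is precisely $Y\cup R$ since the elements of $R$ are the ones collapsed away) is a submultisemigroup of $S$ that is nilpotent by Proposition~\ref{prop82}\eqref{prop82.1}. The two composites $X\mapsto (X\setminus R)\cup R = X$ (using $R\subset X$) and $Y\mapsto (Y\cup R)\setminus R = Y$ are then immediate. The main obstacle I anticipate is the \emph{preservation of maximality} in both directions: I must show that if $X$ is maximal in $S$ then $X\setminus R$ is maximal in $S\setminus R$, and conversely. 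For the forward direction, any nilpotent submultisemigroup of $S\setminus R$ strictly containing $X\setminus R$ would, upon adjoining $R$ back, yield via Proposition~\ref{prop82}\eqref{prop82.1} a nilpotent submultisemigroup of $S$ strictly containing $X$, contradicting maximality; the backward direction is symmetric. Establishing this order-preserving correspondence between the two containment lattices is the crux, and once it is in place the bijectivity follows formally.
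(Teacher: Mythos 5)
Your proof is correct and follows essentially the same route as the paper: part \eqref{cor83.1} is immediate, and part \eqref{cor83.2} is established via the mutually inverse maps $X\mapsto X\setminus R(S)$ and $Y\mapsto Y\cup R(S)$, using Proposition~\ref{prop82} for nilpotency of $Y\cup R(S)$ and for $R(S)\subset X$, together with the observation that passing to the Rees quotient preserves the submultisemigroup property and nilpotency. Your write-up merely makes explicit a few steps the paper leaves implicit (the composite identities and the maximality transfer in both directions), but the argument is the same.
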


\begin{proof}
Claim \eqref{cor83.1} is obvious. 

To prove claim \eqref{cor83.2}, let $X$ be a maximal nilpotent submultisemigroup of $S$.
Then $X\hspace{-1mm}\setminus\hspace{-1mm} R(S)$ is a nilpotent submultisemigroup of 
$S\hspace{-0.7mm}\setminus\hspace{-1mm} R(S)$. Assume $Y$ is a nilpotent submultisemigroup of 
$S\hspace{-0.7mm}\setminus\hspace{-1mm} R(S)$ containing $X\hspace{-1mm}\setminus\hspace{-1mm} R(S)$. 
Then from Proposition~\ref{prop82} it follows that $Y\cup R(S)$ is a nilpotent submultisemigroup of $S$. 
Therefore $Y=X\hspace{-1mm}\setminus\hspace{-1mm} R(S)$ by maximality of $X$, that is 
$Y$ is maximal. 

Conversely, let $Y$ be a maximal  nilpotent submultisemigroup of 
$S\hspace{-1mm}\setminus\hspace{-1mm} R(S)$. Then, similarly to the above, 
$Y\cup R(S)$ is a nilpotent submultisemigroup of $S$. If $X$ is a nilpotent submultisemigroup of $S$
containing $Y\cup R(S)$, then $X\setminus R(S)$ is a nilpotent submultisemigroup of 
$S\hspace{-0.7mm}\setminus\hspace{-1mm} R(S)$ containing $Y$. Hence $X=Y\cup R(S)$ by maximality of $Y$.
This proves claim \eqref{cor83.2}.
\end{proof}

\subsection{Maximal nilpotent submultisemigroups of strongly simple multisemigroups}\label{s7.4}

Let $(S,*)$ be a strongly simple multisemigroup which is not isomorphic to the singleton multisemigroup $\mathbf{0}$.
Recall that $\mathcal{H}$ is a congruence on $S$ and that $S/\mathcal{H}$ is a quasi-semigroup.

\begin{proposition}\label{prop85}
The canonical strong surjective homomorphism $\varphi:S\tto S/\mathcal{H}$ induces a bijection between the set of maximal (with respect to 
inclusion) nilpotent submultisemigroups of $S$ and maximal nilpotent sub-quasi-semigroups of $S/\mathcal{H}$. This
bijection preserves nilpotency degree.
\end{proposition}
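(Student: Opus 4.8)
The plan is to deduce everything from the single fact that $\varphi:S\tto S/\mathcal{H}$ is a \emph{surjective strong} homomorphism and therefore, by Subsection~\ref{s4.4}, extends to a surjective quantale homomorphism $\overline{\varphi}:2^S\to 2^{S/\mathcal{H}}$ sending atoms to atoms. Two elementary properties of $\overline{\varphi}$ will carry the whole argument. First, as a quantale homomorphism it preserves products and arbitrary unions, so $\overline{\varphi}(A*B)=\overline{\varphi}(A)\cdot\overline{\varphi}(B)$ and hence $\overline{\varphi}(X^k)=\overline{\varphi}(X)^k$ for every $X\subseteq S$ and every $k$. Second, since $\overline{\varphi}(X)=\bigcup_{x\in X}\{\varphi(x)\}$, we have $\overline{\varphi}(X)=\varnothing$ if and only if $X=\varnothing$. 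Combining the two yields the crucial equivalence: for every $X\subseteq S$ and every $k$, one has $X^k=\varnothing$ if and only if $\overline{\varphi}(X)^k=\varnothing$. The strong-simplicity hypothesis enters only through part~\eqref{thm31.6} of Theorem~\ref{thm31}, which guarantees that $\mathcal{H}$ is a congruence and $S/\mathcal{H}$ a quasi-semigroup, so that the statement makes sense.

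Next I would record two transport lemmas. If $T\subseteq S$ is a submultisemigroup then $\varphi(T)$ is a submultisemigroup of $S/\mathcal{H}$ (hence automatically a sub-quasi-semigroup), since $\varphi(a)\cdot\varphi(b)=\varphi(a*b)\subseteq\varphi(T)$ for $a,b\in T$; dually, for a submultisemigroup $\overline{T}$ of $S/\mathcal{H}$ the preimage $\varphi^{-1}(\overline{T})$ is a non-empty (by surjectivity) submultisemigroup of $S$, because for $a,b\in\varphi^{-1}(\overline{T})$ and $s\in a*b$ we get $\varphi(s)\in\varphi(a)\cdot\varphi(b)\subseteq\overline{T}$. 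Applying the equivalence ``$X^k=\varnothing\iff\overline{\varphi}(X)^k=\varnothing$'' to $X=T$, and (using $\varphi(\varphi^{-1}(\overline{T}))=\overline{T}$) to $X=\varphi^{-1}(\overline{T})$, shows at once that $T$ and $\varphi(T)$ have the same nilpotency degree, and likewise $\overline{T}$ and $\varphi^{-1}(\overline{T})$. In particular both maps send nilpotent objects to nilpotent objects of equal degree.

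The bijection is then a Galois-connection argument. The maps $T\mapsto\varphi(T)$ and $\overline{T}\mapsto\varphi^{-1}(\overline{T})$ are inclusion-preserving and satisfy $\varphi(\varphi^{-1}(\overline{T}))=\overline{T}$ together with $\varphi^{-1}(\varphi(T))\supseteq T$. The key step is to show that every \emph{maximal} nilpotent submultisemigroup $T$ of $S$ is \emph{saturated}, i.e.\ $T=\varphi^{-1}(\varphi(T))$: the set $V:=\varphi^{-1}(\varphi(T))$ is a submultisemigroup containing $T$ with $\varphi(V)=\varphi(T)$ nilpotent, hence $V$ is nilpotent by the equivalence above, and maximality of $T$ forces $V=T$. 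Consequently the two maps restrict to mutually inverse, inclusion-preserving bijections between the saturated nilpotent submultisemigroups of $S$ and the nilpotent sub-quasi-semigroups of $S/\mathcal{H}$. Since maximal nilpotent submultisemigroups of $S$ are saturated, these bijections match maximal objects with maximal objects, and by the previous paragraph they preserve nilpotency degree exactly.

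The only point requiring care — and the step I would single out as the main obstacle — is the saturation claim, equivalently the assertion that the preimage of a nilpotent sub-quasi-semigroup is genuinely nilpotent in $S$ and not merely elementwise nilpotent. This is precisely where it is essential that $\varphi$ is a \emph{strong} homomorphism, so that $\overline{\varphi}$ is a quantale homomorphism mapping atoms to atoms: this is what makes both $\overline{\varphi}(A*B)=\overline{\varphi}(A)\cdot\overline{\varphi}(B)$ and $\overline{\varphi}(X)=\varnothing\iff X=\varnothing$ hold simultaneously, and hence what makes the clean equivalence ``$X^k=\varnothing\iff\overline{\varphi}(X)^k=\varnothing$'' valid. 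Everything else is a formal consequence of having an inclusion-preserving Galois connection between the two lattices of nilpotent substructures.
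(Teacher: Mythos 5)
Your proof is correct, and it reaches the conclusion by a genuinely more abstract route than the paper. The paper works directly with the structure theory of strongly simple multisemigroups: it invokes Lemma~\ref{lem23} (whether $a*b$ is empty depends only on the $\mathcal{H}$-classes of $a$ and $b$) and Proposition~\ref{prop25} (for quarks, $a*b$ lies inside a single $\mathcal{H}$-class) to show that the $\mathcal{H}$-saturation $T':=\bigcup_{t\in T}\mathcal{H}_t$ of any nilpotent submultisemigroup $T$ is again a nilpotent submultisemigroup of the same nilpotency degree; hence every maximal nilpotent submultisemigroup is a union of $\mathcal{H}$-classes, and the bijection follows. Your argument establishes exactly the same pivotal fact --- maximality forces saturation, and note that your $\varphi^{-1}(\varphi(T))$ \emph{is} the paper's $T'$ --- but derives it purely formally from the single hypothesis that $\varphi$ is a surjective \emph{strong} homomorphism, via the product-preserving, emptiness-reflecting quantale map $\overline{\varphi}$ of Subsection~\ref{s4.4}; you correctly identify that ``atoms to atoms'' is what makes $\overline{\varphi}(X)=\varnothing\iff X=\varnothing$ hold, which is the step that would fail for weak homomorphisms. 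What your route buys is generality and economy: it proves the statement for an arbitrary surjective strong homomorphism (equivalently, by Subsection~\ref{s4.6}, any quotient by a congruence), with strong simplicity entering only through Theorem~\ref{thm31}\eqref{thm31.6} to guarantee that $\mathcal{H}$ is a congruence with quasi-semigroup quotient --- so Lemma~\ref{lem23} and Proposition~\ref{prop25} are needed only upstream, to know $\varphi$ exists. What the paper's route buys is extra structural output from the same computation: it exhibits the saturation concretely as a union of $\mathcal{H}$-classes and yields the additional fact that $H*H=\varnothing$ for every $\mathcal{H}$-class $H$ of a maximal nilpotent submultisemigroup, which is what connects these submultisemigroups to the incidence matrix $\mathbf{I}(S)$ and the classification cited from \cite{GM} right after the proposition. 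One purely presentational remark: in passing from your order isomorphism between \emph{saturated} nilpotent submultisemigroups of $S$ and nilpotent sub-quasi-semigroups of $S/\mathcal{H}$ to the matching of maximal objects, one also needs that every nilpotent submultisemigroup is contained in a saturated one; your own preimage argument gives this for free (for any nilpotent $U$, the set $\varphi^{-1}(\varphi(U))$ is saturated and nilpotent), so this is a matter of arrangement, not a gap.
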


\begin{proof}
From Lemma~\ref{lem23} we have that for $a,b\in S$ the product $a*b$ is non-empty if and
only if the product $c*d$ is non-empty for all $c\in\mathcal{H}_a$ and $d\in\mathcal{H}_b$. Furthermore, 
from Proposition~\ref{prop25} it also follows that $s\,\mathcal{H}\,t$ for any $s\in a*b$
and $t\in c*d$. Therefore, if $T$ is a nilpotent submultisemigroup of $S$, then 
$\displaystyle T':=\bigcup_{t\in T}\mathcal{H}_t$ is a nilpotent submultisemigroup of $S$ of the
same nilpotency degree, moreover, $T\subset T'$. Hence any maximal nilpotent submultisemigroup of $S$
is a union of $\mathcal{H}$-classes. If $H$ is an $\mathcal{H}$-class of a maximal nilpotent submultisemigroup of $S$,
then the above arguments imply $H*H=\varnothing$. Both claims of the proposition follow readily from these observations.
\end{proof}

Let $(S,*)$ be a finite strongly simple multisemigroup, $I_1,\dots, I_m$ --- the list of all  ${\mathcal L}$-classes, and $J_1,\dots, J_n$ --- the list of all  ${\mathcal R}$-classes in $S$. Let $\mathbf{I}(S)$ denote the Boolean matrix $(a_{ij})$, $i=1,\dots, m$, $j=1,\dots, n$, defined as follows:
\begin{displaymath}
a_{ij}=\left\lbrace
\begin{array}{ll} 1, & I_i\cap J_j \text{ is a hypergroup; }\\
0,& \text{ otherwise. }\end{array}
\right.
\end{displaymath}
The matrix $\mathbf{I}(S)$ is called the {\em incidence matrix} of $S$. Recall that $S\not\cong\mathbf{0}$. Then,
by Theorem \ref{thm31}, all rows and columns of $S$ are non-zero.
Assume that $\mathbf{I}(S)$ has the form
\begin{displaymath}
\left(\begin{array}{c|c}E_k&B\\\hline  A&A\cdot B\end{array}\right) 
\end{displaymath}
for some positive integer $k$, where $E_k$ is the identity matrix and
$A$ and $B$ are Boolean matrices such that each row of $A$ is not zero and each column of $B$ is not zero,
and $A\cdot B$ is the Boolean matrix which is the product of $A$ and $B$. In this case a classification of maximal nilpotent sub-quasi-semigroups of $S/\mathcal{H}$ can be found in \cite[Subsection~6.10]{GM}. In particular, there
are exactly $k!$ such sub-quasi-semigroups and each of them has nilpotency degree $k$.

\vspace{1cm}

\noindent
G.K.: Faculty of Computer and Information Science, University of Ljubljana,  
Tr{\v z}a{\v s}ka cesta 25, SI-1001, Ljubljana, SLOVENIA, 
e-mail: {\tt ganna.kudryavtseva\symbol{64}fri.uni-lj.si}
\vspace{0.5cm}

\noindent
V.M.: Department of Mathematics, Uppsala University, SE 471 06,
Uppsala, SWEDEN, e-mail: {\tt mazor\symbol{64}math.uu.se}

\end{document}